\documentclass[11pt,reqno]{amsart}
\usepackage{amssymb,mathtools,enumitem,mathrsfs,cite,fullpage}
\usepackage{booktabs,tabularx,float}
\usepackage{needspace}
\usepackage{amsthm,amsmath}
\usepackage[T1]{fontenc}
\usepackage{lmodern}
\usepackage{hyperref,refcount,microtype}

\newtheorem{theorem}{Theorem}[section]
\newtheorem{lemma}[theorem]{Lemma}

\newtheorem{proposition}[theorem]{Proposition}
\newtheorem{claim}{Claim}
\newcommand{\claimparent}{0}
\renewcommand{\theclaim}{\claimparent.\arabic{claim}}

\newcommand{\claimsof}[1]{\setcounter{claim}{0}\edef\claimparent{\getrefnumber{#1}}}
\newenvironment{poc}{\begin{proof}[Proof of Claim~\theclaim]}{\end{proof}}
\theoremstyle{definition}
\newtheorem{definition}[theorem]{Definition}
\theoremstyle{remark}

\newcommand{\N}{\mathbb N}
\newcommand{\Z}{\mathbb Z}
\newcommand{\Q}{\mathbb Q}
\newcommand{\R}{\mathbb R}
\newcommand{\E}{\mathbb E}
\renewcommand{\Pr}{\mathbb P}
\newcommand{\Var}{\operatorname{Var}}

\renewcommand{\le}{\leqslant}
\renewcommand{\ge}{\geqslant}
\setlist[enumerate,1]{label=\textup{(\roman*)},leftmargin=*}

\newcommand{\PP}{\Pr}
\newcommand{\EE}{\E}
\newcommand{\rank}{\operatorname{rank}}
\newcommand{\Span}{\operatorname{span}}
\newcommand{\excan}{\operatorname{ex}_{\mathrm{can}}}
\newcommand{\canarrow}{\xrightarrow{\mathrm{can}}}
\newcommand{\cI}{\mathcal I}

\title{A sparse canonical Rado theorem}
\author{Zhenzhen Lou}
\address{College of Science, University of Shanghai for Science and
Technology, Shanghai 200093, China}
\email{louzz@usst.edu.cn}

\author{Guangfu Wang}
\address{School of Mathematics and Information Sciences, Yantai University,
Yantai, Shandong 264005, China}
\email{gfwang@ytu.edu.cn}

\author{Man Wu}
\address{School of Mathematics and Computational Science, Xiangtan University,
Xiangtan, Hunan 411105, China}
\email{manwu@smail.xtu.edu.cn}

\author{Zhifei Yan}
\address{ECOPRO, Institute for Basic Science, 55 Expo-ro, Yuseong-gu,
Daejeon, 34126, Republic of Korea}
\email{zhifeiyan@ibs.re.kr}

\date{}
\subjclass[2020]{Primary 05D10; Secondary 11B30, 05D40}

\hypersetup{colorlinks=false,pdfborder={0 0 1},
linkbordercolor={1 0 0},citebordercolor={0 1 0},
pdftitle={A sparse canonical Rado theorem},
pdfauthor={Zhenzhen Lou, Guangfu Wang, Man Wu, Zhifei Yan}}

\begin{document}
\begin{abstract}
Canonical Ramsey theory studies structured colour patterns in colourings with arbitrarily many colours. Lefmann's canonical Rado theorem extends the canonical van der Waerden theorem of Erd\H{o}s and Graham to partition-regular linear systems. Recently, Alvarado, Kohayakawa, Morris, Mota and Ortega proved a sparse canonical van der Waerden theorem for random set $[n]_p$.

In this paper, we prove a sparse canonical Rado theorem. For every irredundant partition-regular matrix $A$ of positive rank with $k\ge3$ columns, and every prescribed partition witnessing the columns condition, if $p\ge Cn^{-1/m(A)}$, then with probability at least $1-e^{-cnp}$ every colouring of $[n]_p$ contains
$$\Omega\big(p^k n^{k-\rank(A)}\big)$$
canonical solutions of $A\mathbf x=0$ with distinct coordinates. A matching 0-statement holds when the prescribed partition has one or at least three blocks; the Schur equation shows that it can fail for two-block patterns. For every prescribed partition, we also establish a robust threshold at the same scale under deletion of a fixed positive proportion of the random set.
\end{abstract}
\maketitle

\section{Introduction}\label{sec:intro}

One of the basic questions of Ramsey theory is which arithmetic
configurations must occur in a colouring of the integers. For a fixed
number of colours, van der Waerden's theorem~\cite{VDW} finds a
monochromatic progression of any prescribed length in a sufficiently
long interval. With unrestricted colours, this conclusion fails:
giving each integer its own colour excludes every non-trivial
monochromatic progression. Erd\H{o}s and Graham~\cite{EG} showed
that every colouring nevertheless contains a monochromatic or
\emph{rainbow} progression, the latter having all its terms in
different colours.

Arithmetic progressions are one example of a homogeneous linear
system. Rado~\cite{Rado} characterised the systems $A\mathbf x=0$
that have a monochromatic positive integer solution in every finite
colouring of $\N$; these matrices $A$ are called \emph{partition regular}. His
\emph{columns condition} describes them by a partition of the columns:
the first block sums to zero, and each later block sum lies in the
rational span of the preceding columns. Schur's equation $x+y=z$ is
another example.
Lefmann~\cite{Lefmann}, using Deuber sets~\cite{Deuber}, extended
the theorem to unrestricted colourings. For any fixed partition
witnessing the columns condition, the extra alternative is a solution
whose coordinates in each block have one colour, with different
colours on different blocks. Thus the algebra of the system specifies
the additional canonical colour pattern.

How much of this structure survives in a sparse random set?
Let $[n]_p$ retain each integer of $[n]=\{1,\ldots,n\}$ independently
with probability $p$. For a fixed palette, R\"odl and
Ruci\'nski~\cite{RRvdW} found the scale $n^{-1/(k-1)}$ for
$k$-term progressions with $k\ge3$. For partition-regular systems,
they introduced the parameter $m(A)$, defined below, and proved the
0-statement at $n^{-1/m(A)}$ together with the 1-statement for
density-regular systems~\cite{RR}. Friedgut, R\"odl and
Schacht~\cite{FRS} completed the general fixed-colour Rado
1-statement. For the 0-statement we use the corrected proof of
Hancock and Treglown~\cite{HT}.
Recently, transference methods, developed by Conlon and Gowers~\cite{CG}
and Schacht~\cite{Schacht}, and hypergraph containers, developed by
Balogh, Morris and Samotij~\cite{BMS} and Saxton and
Thomason~\cite{ST}, provide ways to pass from dense results to random
sets. Containers describe sets with few solutions by a controlled
family of larger sets that still contain few solutions. Hancock,
Staden and Treglown~\cite{HST} used them to prove resilience results
for the fixed-colour random Rado theorem.

For unrestricted palettes, Alvarado, Kohayakawa, Morris, Mota and
Ortega~\cite{AKMMO} proved the sparse canonical van der Waerden
theorem at $n^{-1/(k-1)}$ and asked for a sparse canonical Rado
theorem. Their proof uses sparse Szemer\'edi for large colour
classes and containers when every class is small. More recently,
Allen, B\"ottcher, Lada and Mergoni Cecchelli~\cite[Theorem~1.10
and Section~13.3]{ABLM} recovered the progression result through
their general transference principle.

These progression results do not cover a general partition-regular
matrix or a prescribed columns-condition block pattern. A dense
colour class need not contain a solution: the upper half of $[n]$
has no solution to $x+y=z$. Merging colours can also create false
equalities within a prescribed block. The fixed-colour sparse Rado
proof of Friedgut, R\"odl and Schacht~\cite{FRS} inducts on the
palette and so does not directly handle unrestricted colours.

In this paper, we establish the sparse canonical Rado 1-statement, with
unrestricted palettes, for every irredundant partition-regular
system of positive rank with at least three variables. The scale
is the ordinary Rado scale $n^{-1/m(A)}$, and every colouring has
the expected order of canonical solutions. We use a finite palette
reduction that preserves the prescribed block pattern, followed
by a container argument. The matching 0-statement holds for one
or at least three blocks; two-block patterns can behave differently.
Table~\ref{tab:history} places these results alongside the earlier
dense and random theorems.

\begin{table}[H]
\centering
\scriptsize
\renewcommand{\arraystretch}{1}
\setlength{\extrarowheight}{0.5mm}
\begin{tabularx}{\textwidth}{@{}>{\raggedright\arraybackslash}p{0.24\textwidth}>{\raggedright\arraybackslash}p{0.13\textwidth}>{\raggedright\arraybackslash}p{0.22\textwidth}>{\raggedright\arraybackslash}X@{}}
\toprule
Configurations & Colours & Dense theorem & Random scale\\
\midrule
$k$-term progressions & Fixed $r\ge2$ & van der Waerden~\cite{VDW}
& $n^{-1/(k-1)}$; R\"odl--Ruci\'nski~\cite{RRvdW}\\
$k$-term progressions & Unrestricted & Erd\H{o}s--Graham~\cite{EG}
& $n^{-1/(k-1)}$; AKMMO~\cite{AKMMO}\\
Partition-regular systems & Fixed $r\ge2$ & Rado~\cite{Rado}
& $n^{-1/m(A)}$; R\"odl--Ruci\'nski~\cite{RR},
Friedgut--R\"odl--Schacht~\cite{FRS}\\
Partition-regular systems & Unrestricted & Lefmann~\cite{Lefmann}
& $n^{-1/m(A)}$; Theorem~\ref{thm:one}
and Proposition~\ref{prop:zero}\\
\bottomrule
\end{tabularx}
\caption{\scriptsize 
 In the
final row, Theorem~\ref{thm:one} holds for every prescribed block
pattern, while Proposition~\ref{prop:zero} requires one or at least
three blocks. The parameter $m(A)$ is defined below.}
\label{tab:history}
\end{table}

\subsection{The main results}

Let $A\in\Z^{\ell\times k}$ have
rank $\ell\ge1$. A solution of $A\mathbf{x}=0$ is \emph{proper} if its coordinates are
distinct, and $A$ is \emph{irredundant} if it has a proper positive
integer solution. For partition-regular $A$, choose one partition
witnessing the columns condition. Throughout the paper, a proper
solution is \emph{canonical} if it is monochromatic, rainbow, or has
exactly this block pattern. The partition is fixed before the colouring
is chosen. For a finite set $U\subseteq\N$, write
$$U\canarrow A$$
if every colouring of $U$ has a proper canonical
solution of $A\mathbf x=0$.
The formal definition is given in Definition~\ref{def:canonical}.
For $J\subseteq[k]$, let $A_{\overline J}$ be the sub-matrix on the
columns outside $J$. The Rado parameter $m(A)$ is
\begin{align}
m(A)=\max_{\substack{J\subseteq[k]\\|J|\ge2}}
 \frac{|J|-1}{|J|-1+\rank(A_{\overline J})-\ell},\label{eq:m(A)}    
\end{align}
where ranks are over $\Q$ and the empty matrix has rank zero.
For irredundant partition-regular matrices with $k\ge3$, the
denominators are positive and $m(A)>1$. For $k$-term progressions and
Schur's equation, $m(A)$ is $k-1$ and $2$, respectively. Our first
result gives a count uniform over all colourings at the ordinary
Rado scale.

\begin{theorem}[1-statement]\label{thm:one}
Let $A\in\Z^{\ell\times k}$ be an irredundant partition-regular
matrix with $k\ge3$ and $\rank(A)=\ell\ge1$. Fix a partition
$\cI$ witnessing the columns condition. There are
$C,c,\gamma>0$ such that, for all sufficiently large $n$ and
all $p\ge Cn^{-1/m(A)}$, with probability at least $1-e^{-cnp}$,
every colouring of $[n]_p$ contains at least
$\gamma p^k n^{k-\ell}$ ordered proper canonical solutions.
In particular,
\[
 \Pr\bigl([n]_p\canarrow A\bigr)\ge1-e^{-cnp}.
\]
\end{theorem}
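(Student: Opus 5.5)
The plan is a finite palette reduction followed by a container argument, as announced in the introduction.

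\emph{Step 1: a robust dense canonical statement.} Fix $\cI$. Using Lefmann's theorem together with a supersaturation (averaging over dilates) argument, I will show the following. There are $r_0\in\N$, $\varepsilon_0>0$ and $\gamma_0>0$ such that for every $U\subseteq[n]$ with $|U|\ge(1-\varepsilon_0)n$ and every colouring of $U$, one of two things happens. Either there are at least $\gamma_0 n^{k-\ell}$ proper canonical solutions, or the colouring has at most $r_0$ colour classes, each of size at least $\varepsilon_0 n$, up to a set of size $\varepsilon_0 n$.

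For the first alternative, a colouring with many small classes is treated by counting. A generic solution lands in distinct small classes, so it is rainbow, unless it is blocked by \emph{rainbow-blocking} configurations. These are solutions with two coordinates in one class, and their number is at most about $\sum_i |C_i|^2 n^{k-\ell-1}\le \varepsilon n^{k-\ell}$ when all classes have size at most $\varepsilon n$. Here I use that $A$ is irredundant with $k\ge3$, so that fixing two coordinates costs a factor $n$. The proper solutions number $\Theta(n^{k-\ell})$.

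In general I will merge all small classes into one "rest" colour. This can destroy the prescribed block pattern, since merged colours create false equalities. To avoid this, I instead apply a Rado/Frankl–Graham–Rödl supersaturation count to an auxiliary colouring with at most $r_0+1$ colours. The merged colour is then recoloured randomly into $N$ fresh colours, and I check that canonical patterns survive with positive probability per solution.

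\emph{Step 2: containers.} Let $\mathcal H$ be the $k$-uniform hypergraph on $[n]$ of proper solutions. The standard codegree computation (as in Friedgut–Rödl–Schacht or Hancock–Staden–Treglown) shows $\Delta_j(\mathcal H)\lesssim n^{k-\ell-(j-1)/m(A)}$ after optimising over $J$, via \eqref{eq:m(A)}. The container theorem then gives, for $\tau=C'n^{-1/m(A)}$, a family $\mathcal C$ with $|\mathcal C|\le\exp(O(\tau\log(1/\tau) n))$. Every set with few solutions lies in some $T\in\mathcal C$ with fewer than $\delta n^{k-\ell}$ solutions, and such $T$ are far, in the sense of Step 1, from any set supporting a robust canonical structure.

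To encode a colouring of $[n]_p$ with few canonical solutions, I take the finite-palette classes produced by Step 1 together with a "rainbow-free" auxiliary hypergraph. Each "bad" pattern class $X_i$ (a union of colours avoiding the canonical patterns) lies in a container $T_i$. The union $\bigcup_i T_i$ with the dense structure then misses $\varepsilon n$ elements of $[n]$.

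\emph{Step 3: probability.} Each container tuple must miss a set of size $\varepsilon n$ but still contain $[n]_p$ up to $o(np)$ elements. This has probability $e^{-\varepsilon np/2}$, which beats $|\mathcal C|^{r_0}$ once $C$ is large. Hence with probability $1-e^{-cnp}$ every colouring has at least $\gamma p^kn^{k-\ell}$ canonical solutions, using a Janson-type count for the canonical structure inside dense pieces of $[n]_p$.

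\emph{Main obstacle.} The hardest step is Step 1's interplay with the sparse setting. Colourings of $[n]_p$ have unboundedly many classes, and small classes in the random set can be large relative to $np$. I would first try to handle the small-class regime purely by the rainbow count within $[n]_p$, using the sparse Kohayakawa–Łuczak–Rödl style deletion bound. The remaining classes are at most $1/\varepsilon$ large ones, which I would feed to the container argument as a fixed palette.
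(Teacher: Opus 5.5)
Your outline has the right architecture (bounded palette, then containers), but the mechanisms you describe never produce the prescribed block pattern. Apart from the opening appeal to Lefmann, everything you count is either monochromatic or rainbow. The monochromatic ones come from supersaturation for the merged $(r_0+1)$-colouring and from containers of the unlabelled hypergraph $\mathcal H$ applied to colour classes. The rainbow ones come from the collision count. That cannot suffice. For Schur's matrix $(1,1,-1)$ with blocks $\{1,3\},\{2\}$, colour $[n]_p$ by $\chi(x)=v_2(x)$. A proper triple $x+y=z$ with $v_2(x)\le v_2(y)$ has $\chi(x)=\chi(z)\ne\chi(y)$ if the valuations differ, and $\chi(x)=\chi(y)\ne\chi(z)$ otherwise. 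So there is no monochromatic or rainbow solution at all. Yet up to an $\varepsilon_0$-fraction of vertices this is a bounded colouring with large classes, i.e.\ your second alternative. Each class is independent in $\mathcal H$. Every monochromatic solution of your merged colouring lies among the multiples of $2^J$ (the classes with $v_2\ge J$ being the small ones), where each triple has two coordinates in one original class and is discarded by your bookkeeping. For the same reason, treating small classes by a rainbow count and large classes separately misses solutions that mix the two.

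Because the block pattern relates different classes, it must be built into the hypergraph given to the container theorem. The paper uses the labelled multihypergraph $G_n$ on $[n]\times[R]$. Its edges are labelled proper solutions that are monochromatic, rainbow or of the prescribed pattern, with every repeated label protected, and it encodes $\varphi$ as $\{(x,\varphi(x))\}$. The obstruction is then not that containers miss $\varepsilon n$ integers: $[n]\times\{i\}$ with $i$ a merged light label spans no edge but covers everything. Instead one groups integers by their set $D$ of allowed labels and spreads each class proportionally inside each $B_D$, so that light classes stay small. This yields an upper-tail excess of $[n]_p$ in some $B_D$. Your Step 3 also loses a $\log n$. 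A plain union bound over $\exp(O(\tau\log(1/\tau)n))$ containers with $\tau\asymp n^{-1/m(A)}$ does not beat $e^{-\Theta(np)}$ for fixed $C$. You need $\tau=\alpha p$ and the weight $p^{|S|}$ of a fingerprint $S\subseteq[n]_p$. For a count rather than existence you also need a device such as the paper's sample of $\Theta(np^{1-k})$ edges. (Your codegree bound should read $\Delta_j\lesssim n^{k-\ell-1-(j-1)/m(A)}$.)

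Step 1 also has circular constants. A single cutoff $\varepsilon_0$ must make the large classes few ($r_0\le1/\varepsilon_0$). It must also make the small classes so small that the collision loss $\binom{k}{2}\varepsilon_0n^{k-\ell}$ stays below a supersaturation constant for $r_0+1$ (or $r_0+N$) colours. That constant decays as $r_0$ and $N$ grow, so no choice closes the argument. The random recolouring has the same defect, since false equalities cost order $n^{k-\ell}/N$. Averaging Lefmann over dilates gives a density gap but only about $n/M$ solutions.

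The paper closes this with three ingredients missing from your plan.
\begin{enumerate}
\item Small-class vertices get fresh distinct colours. Supersaturation then comes from the canonical density gap $\sigma$ together with arithmetic removal over the $(h+1)^k$ coordinate types. This gap is uniform over all colourings, unlike the monochromatic gap for $r$ colours, which tends to $0$ with $r$: for Schur, $v_2$ colours $\{x:v_2(x)<J\}$, of density about $1-2^{-J}$, with no monochromatic solution. So the constant depends only on the number $h$ of protected classes, and light classes may have size $a_hn$.
\item The thresholds are decoupled by cutoffs $t_{j+1}=\min\{t_j/2,a_{h_j}/4\}$ with $h_j=\lceil1/t_j\rceil$. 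One deletes all classes whose sizes lie in a single window $(t_{j+1}n,t_jn]$, chosen by pigeonhole to cover at most $\eta n$ vertices.
\item Only admissible solutions are counted, in which every repeated colour is protected. Merging light classes into boundedly many labels therefore creates no false equalities.
\end{enumerate}
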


The count has the order of the expected total number of proper
solutions in $[n]_p$, since $[n]$ contains $\Theta_A(n^{k-\ell})$
ordered proper solutions. We reduce arbitrary colourings to a bounded
palette and count only patterns that survive when the original
colours are restored. Sections~\ref{sec:overview} and
\ref{sec:counting} give the proof.

Below a sufficiently small constant multiple
of the same scale, the ordinary random Rado theorem supplies a
canonical-free colouring whenever the block pattern has one or at
least three blocks.

\begin{proposition}[0-statement]\label{prop:zero}
Let $A\in\Z^{\ell\times k}$ be an irredundant partition-regular
matrix with $k\ge3$ and $\rank(A)=\ell\ge1$. Fix a partition $\cI$ witnessing the columns condition, with
one or at least three blocks.
There is $c>0$ such that, for every $p\le cn^{-1/m(A)}$,
$$\Pr\left([n]_p\canarrow A\right)=o(1).$$
\end{proposition}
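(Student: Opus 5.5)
We will deduce the 0-statement from the ordinary random Rado 0-statement of R\"odl and Ruci\'nski~\cite{RR}, in the corrected form of Hancock and Treglown~\cite{HT}. The key point is that a colouring with only two colours cannot contain a canonical solution of the two non-monochromatic kinds, except in degenerate cases. Since $A$ is partition regular, there is $c>0$ such that for $p\le cn^{-1/m(A)}$, with probability $1-o(1)$, $[n]_p$ admits a $2$-colouring with no monochromatic proper solution of $A\mathbf x=0$. More precisely, the cited result gives a colouring with no monochromatic solution among the proper ones, which is the version we need, since canonical solutions are by definition proper. We fix such a colouring $\chi$ of $[n]_p$ and check that it has no proper canonical solution.

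Let $\mathbf x$ be a proper solution. By the choice of $\chi$, it is not monochromatic. It is not rainbow, because $k\ge3$ and $\chi$ uses only two colours, so two of the $k$ coordinates share a colour. It remains to rule out the prescribed block pattern $\cI=\{I_1,\dots,I_t\}$, in which every block is monochromatic and distinct blocks receive distinct colours. If $t=1$, this pattern is exactly monochromatic, which is already excluded. If $t\ge3$, it requires at least three distinct colours, which is impossible in a $2$-colouring. Hence $\chi$ witnesses that $[n]_p\not\canarrow A$, and $\Pr([n]_p\canarrow A)=o(1)$.

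The main obstacle is simply to apply the correct form of the fixed-colour 0-statement. We must confirm that the cited theorem applies to all irredundant partition-regular $A$ with the same parameter $m(A)$ as in~\eqref{eq:m(A)}. We must also confirm that it provides a $2$-colouring avoiding monochromatic \emph{proper} solutions, rather than only a colouring avoiding all solutions. The hypotheses $k\ge3$ and rank $\ell\ge1$ match those in~\cite{HT}, so this step amounts to quoting the result, possibly after adjusting the constant $c$. The argument also explains why two-block patterns are excluded: a $2$-colouring could realise such a pattern, as the Schur equation shows.
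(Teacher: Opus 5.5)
Your proposal is correct and follows the paper's proof essentially verbatim. You take the two-colouring of $[n]_p$ with no monochromatic proper solution supplied by the R\"odl--Ruci\'nski 0-statement (as corrected by Hancock and Treglown), and observe that with two colours and $k\ge3$ no proper solution is rainbow, a one-block pattern is monochromatic, and a pattern with at least three blocks needs at least three colours.
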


For two-block patterns, the small-constant 0-statement can fail,
as the Schur example in Section~\ref{sec:examples} shows.

\subsection{The robust result}

The uniformity of the counting argument allows us to ask for more:
does the canonical property survive the deletion of a positive
proportion of the random set? Above the Rado scale it does, with
the same order of solution count. Below that scale, a small deletion
can remove every proper solution of the system. This gives a robust
threshold for every prescribed block pattern.

Fix an irredundant partition-regular matrix $A$ and a
partition $\cI$ witnessing its columns condition. Let
$\mathcal E_\delta=\mathcal E_\delta(A,\cI;n,p)$, for
$\delta\in(0,1)$, be the event that some $U\subseteq[n]_p$ with
$|U|\ge(1-\delta)|[n]_p|$ admits a colouring with no proper
solution of $A\mathbf x=0$ that is canonical with respect to $\cI$.

\begin{theorem}[robustness]\label{thm:robust}
Let $A\in\Z^{\ell\times k}$ be an irredundant partition-regular
matrix with $k\ge3$ and $\rank(A)=\ell\ge1$. Fix a partition
$\cI$ witnessing its columns condition, and let $\mathcal E_\delta$
be the event defined above. There are $\delta\in(0,1)$ and $0<c<C$
such that
\[
 \lim_{n\to\infty}\Pr(\mathcal E_\delta)
 =\begin{cases}
 1,&p\le cn^{-1/m(A)},\\[1ex]
 0,&p\ge Cn^{-1/m(A)}.
 \end{cases}
\]
In the upper range, with probability
at least $1-e^{-cnp}$, every colouring of 
$U\subseteq[n]_p$ with $|U|\ge(1-\delta)|[n]_p|$ contains at least
$cp^k n^{k-\ell}$ ordered proper canonical solutions.
\end{theorem}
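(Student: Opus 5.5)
The proof has two parts, one for each range of $p$.

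\emph{Lower range.} For $p\le cn^{-1/m(A)}$ we show that deleting a $\delta$-fraction of $[n]_p$ can destroy every proper solution of $A\mathbf x=0$. The empty colouring pattern is then vacuously canonical-free, and no block-pattern hypothesis is needed. Let $J$ attain the maximum in \eqref{eq:m(A)}. Proper solutions with $|J|$ prescribed coordinates in $J$ number $O(n^{|J|-1+\rank(A_{\overline J})-\ell})$ in $[n]$, so the expected number of proper solutions in $[n]_p$ can be compared to $np$. I would prove the statement for $m(A)$ directly, in the form of a Rödl--Ruciński-type deletion bound. Call $S\subseteq[n]$ a \emph{strictly balanced witness} if it is the coordinate set of a proper solution. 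A first-moment count over all minimal sub-configurations shows that the expected number of proper solutions meeting a fixed element is $O(c^{m(A)}\,\cdot)$ times small quantities. The cleanest route is the following.
\begin{enumerate}
\item The expected number $\mu$ of proper solutions in $[n]_p$ satisfies $\mu\le C_A p^kn^{k-\ell}$.
\item For $p\le cn^{-1/m(A)}$ we get $p^{k}n^{k-\ell}\le c^{m'}np$. This uses that $m(A)\ge (k-1)/(k-\ell-1)$, which is the $J=[k]$ term; this term gives $p^{k-1}n^{k-\ell-1}\le c^{k-1}$.
\item Markov's inequality together with Chernoff for $|[n]_p|\sim np$ shows that, with probability $1-o(1)$, there are at most $\delta np/2$ proper solutions.
\item Deleting one element from each solution leaves $U$ with $|U|\ge(1-\delta)|[n]_p|$ and no proper solutions.
\end{enumerate}
If $np\to0$ the event holds trivially with $U=\emptyset$ of admissible size. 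One must also handle the case $np$ bounded, where Markov still gives $o(1)$ failure once the constants are tuned. Choosing $c=c(\delta)$ small makes $\Pr(\mathcal E_\delta)\to1$.

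\emph{Upper range.} This is the substance of the theorem: a robust strengthening of Theorem~\ref{thm:one}. I would rerun the proof of Theorem~\ref{thm:one} with the colouring defined only on $U$. The plan is to view the deleted set $[n]_p\setminus U$ as an extra colour class that is forbidden to participate. The bounded-palette reduction of Section~\ref{sec:overview} is applied to the colouring on $U$. The container argument of Section~\ref{sec:counting} then yields a family of containers with few canonical solutions in $[n]$, and each container misses a positive density of $[n]$.

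The dense input needed is a supersaturated canonical Rado statement: every colouring of a subset $W\subseteq[n]$ with $|W|\ge(1-2\delta)n$ has $\gamma' n^{k-\ell}$ proper canonical solutions. This follows from Lefmann's theorem together with a Varnavides-type averaging over dilated copies of a fixed finite canonical Rado configuration $U_0$. A random dilation $a+dU_0$ (or $dU_0$ for homogeneous systems) lies inside $W$ with probability $\ge1-2\delta|U_0|>1/2$ if $\delta<1/(4|U_0|)$, and each such copy contains a canonical solution.

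The containers then show the following. For a fixed container $T$, the probability that $[n]_p\setminus T$ has fewer than $\delta'np$ elements, while $U\subseteq T\cup(\text{deleted})$, is at most $e^{-c'np}$. Taking $\delta$ small relative to the container density deficit, a union bound over the $e^{o(np)}$ containers gives the count $cp^kn^{k-\ell}$ with probability $1-e^{-cnp}$.

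\emph{Main obstacle.} The hard step is making the container/counting step of Theorem~\ref{thm:one} tolerate deletion uniformly in $U$. The deleted set depends on $[n]_p$, so I would fold it into the fingerprint, as in Hancock--Staden--Treglown's resilience proof. Concretely, I would bound the number of canonical-sparse colourings of sets $U$ in terms of containers for the hypergraph of proper solutions restricted to $U$. I would then check that the dense supersaturation constant $\gamma'$ survives the density loss $2\delta$, which dictates the choice of $\delta$.
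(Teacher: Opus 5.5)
Your lower-range argument has a genuine gap in two places. First, step~2 runs the inequality the wrong way. From $p\le cn^{-1/m(A)}$ you get $p^{k-1}n^{k-\ell-1}\le c^{k-1}n^{\,k-\ell-1-(k-1)/m(A)}$. Because $m(A)\ge(k-1)/(k-\ell-1)$, the exponent is \emph{nonnegative}, and it vanishes only when the maximum defining $m(A)$ is attained at $J=[k]$. Otherwise complete solutions are far too numerous to delete one by one. For $A=(1,1,-1,0)$ one has $m(A)=2$, and at $p=cn^{-1/2}$ there are $\Theta(c^4n)$ expected proper solutions against only $\Theta(cn^{1/2})$ elements of $[n]_p$. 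The missing idea, which you gesture at in your first sentences and then drop, is to delete on a projection. Take $J$ attaining $m(A)$ and the hypergraph of supports of proper vectors in $\pi_J(\ker A)\cap[n]^J$; a set independent in it is $A$-free. It has at most $n^{r(J)}$ edges with $r(J)-1=(|J|-1)/m(A)$, so the number $X$ of its edges in $[n]_p$ has expectation at most $np\,(pn^{1/m(A)})^{|J|-1}\le c^{|J|-1}np$. Second, even with the right count, Markov's inequality gives only $\Pr(X>\delta np/2)\le 2c^{|J|-1}/\delta$, which is a fixed constant. With $c$ and $\delta$ fixed you therefore do not get $\Pr(\mathcal E_\delta)\to1$, and no tuning of constants makes this $o(1)$. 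You need concentration. The fibre bound $r(W)\ge1+(|W|-1)/m(A)$ gives $\Delta_t=O(n^{(|J|-t)/m(A)})$ for the projected hypergraph, hence $\Var X=O(np)$, so Chebyshev works when $np>\log n$. For $np\le\log n$, the bound $\E X\le(\log n)^{|J|}/n$ (using $|J|-r(J)\ge1$) gives $X=0$ with high probability. This is the paper's Proposition~\ref{prop:alteration}.

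For the upper range no new idea is needed. Proposition~\ref{prop:counting} is already uniform over all $U\subseteq[n]_p$ with $|U|\ge(1-\sigma+\varepsilon)|[n]_p|$, since the container argument concerns the labelled set $X_\varphi$ of an arbitrary partial colouring. Taking $\delta=\varepsilon=\sigma/2$ finishes, and the deleted set never has to be folded into a fingerprint. Your own sketch of this half, however, contains two steps that fail.

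First, averaging over dilates $dU_0$ proves only the density gap, not the claimed supersaturation. When $A\mathbf 1\ne0$ (e.g.\ Schur), translations do not preserve solutions, so there are only $O(n)$ copies and the averaging guarantees only $\Omega(n)$ canonical solutions rather than $\gamma' n^{k-\ell}$ with $k-\ell\ge2$. Even affine copies number only $O(n^2)$, while $k-\ell$ can exceed $2$. The paper upgrades existence to the full order by arithmetic removal. It also counts \emph{admissible} solutions of a merged bounded-palette colouring, which is where the smallness of the light classes enters.

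Second, the containers live in $[n]\times[R]$, so a sparse container need not miss a positive density of $[n]$. For a light label $i$, the set $[n]\times\{i\}$ spans no edge of $G_n$ yet covers all of $[n]$. Such containers are ruled out only through the light-class bound. That is why the paper spreads a supported colouring over the label-choice groups $B_D$ (Lemma~\ref{lem:lists}) and controls upper-tail excesses of $[n]_p$ in those groups, rather than the size of $[n]_p$ outside a container.
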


\subsection{Organisation}

Section~\ref{sec:overview} gives the notation, the auxiliary
hypergraph and an overview of the proof of the 1-statement.
Section~\ref{sec:counting} proves the 1-statement in three parts:
colour reduction, supersaturation, and the container argument.
Section~\ref{sec:zero} proves the 0-statement and the robust theorem.
Section~\ref{sec:examples} gives a two-block example for the Schur matrix.
The proof of Lemma~\ref{lem:gap} is given in Appendix~\ref{app:finite}.

\medskip

\section{Notation and proof overview}\label{sec:overview}

We first give the notation used throughout the paper. We then
introduce admissible solutions and the auxiliary hypergraph, and
explain how they are used to prove the 1-statement.

\subsection{General notation}\label{subsec:notation}

Write $\N=\{1,2,\ldots\}$ and $[n]=\{1,\ldots,n\}$. An
integer matrix $A\in\Z^{\ell\times k}$ is \emph{partition regular}
if every finite colouring of $\N$ has a monochromatic solution
$\mathbf x\in\N^k$ of $A\mathbf x=0$. A solution is
\emph{proper} if its coordinates are pairwise distinct. The matrix
is \emph{irredundant} if it has a proper positive integer solution.
Throughout the rest of the paper, $A\in\Z^{\ell\times k}$ is
irredundant and partition regular, with $k\ge3$ and
$\rank(A)=\ell\ge1$, unless stated otherwise. All ranks and spans
are over $\Q$. Redundant rows may be removed without changing the
solution set or the columns condition.

Write $A=(\mathbf a_1,\ldots,\mathbf a_k)$. We fix a prescribed ordered
partition
\[
\cI=(I_0,\ldots,I_s),\qquad I_t\ne\varnothing,\qquad
\bigsqcup_{t=0}^s I_t=[k],
\]
satisfying the \emph{columns condition}:
\begin{equation}\label{eq:columns}
 \sum_{i\in I_0}\mathbf a_i=0,\qquad
 \sum_{i\in I_t}\mathbf a_i\in
 \Span_{\Q}\{\mathbf a_i:i\in I_0\cup\cdots\cup I_{t-1}\}
 \quad(1\le t\le s).
\end{equation}
Rado's theorem~\cite{Rado} says that an integer matrix is partition
regular if and only if its columns admit such a partition. If
desired, the columns and coordinate positions can be permuted
together so that the blocks are consecutive. The prescribed
coordinate roles remain fixed after this relabelling.

\begin{definition}[Canonical solutions]\label{def:canonical}
Let $\chi$ be a colouring of $U\subseteq\N$. A proper solution
$\mathbf x=(x_1,\ldots,x_k)\in U^k$ of $A\mathbf x=0$ is
\emph{canonical} if at least one of the following holds:
\begin{enumerate}
\item \textbf{Monochromatic:} all entries $x_1,\ldots,x_k$ have the same colour.
\item\textbf{Rainbow:} all entries $x_1,\ldots,x_k$ have different colours.
\item \textbf{Prescribed block pattern:} the entries $x_i$ with $i\in I_t$
have the same colour for each block $I_t$, and different blocks
have different colours.
\end{enumerate}
\end{definition}

In the third case, each prescribed block uses one colour, and
no colour is shared by two blocks. For example, for
$x_1+x_2=x_3$ (corresponding to $(1,1,-1)$) with $\cI=(\{1,3\},\{2\})$, this means
$\chi(x_1)=\chi(x_3)\ne\chi(x_2)$.

The partition is fixed before the colouring; its order is used in
\eqref{eq:columns}. All results apply to any such partition, subject
to the stated restrictions. There are finitely many witnessing
partitions, so constants may depend only on the matrix and the
stated parameters.

For a finite set $U\subseteq\N$, write $U\canarrow A$ if every
colouring of $U$ has a proper canonical solution of $A\mathbf x=0$
in $U^k$.
The empty set has no proper solution, so $\varnothing\canarrow A$
does not hold. We count ordered solutions,
so different orderings of one support count separately when they
satisfy the system.

Put $\mathcal V=\ker_{\Q}A$. For $W\subseteq[k]$, let $\pi_W$
denote coordinate projection and write $r(W)=\dim\pi_W(\mathcal V)$.

For $W\subseteq[k]$, $A_{\overline W}$ denotes the submatrix on
the columns outside $W$. We use the parameter $m(A)$ defined
in~\eqref{eq:m(A)}.

For a hypergraph $H$, write $V(H)$ for its vertex set, $v(H)$
for its number of vertices, and $e(H)$ for its number of edges.
The maximum $j$-degree $\Delta_j(H)$ is the largest number of
edges containing a fixed set of $j$ distinct vertices. We write
$H[U]$ for the hypergraph induced by $U\subseteq V(H)$.
In an ordered hypergraph, edges are ordered tuples of distinct
vertices. In a multihypergraph, all edge counts and degrees include
multiplicities. A set is \emph{independent} if it contains no edge.

\subsection{The 1-statement: notation and overview}\label{subsec:one-notation}

We first reduce the colours, then show that the reduced colourings
have many solutions in dense sets, and finally transfer this count
to $[n]_p$. The solutions counted at each step retain their colour
pattern when the reduction is undone.

\medskip
\Needspace{4\baselineskip}
\noindent\textbf{Colour reduction.}
An arbitrary colouring may use as many colours as there are
integers. We want to replace it, after a small deletion, by a
colouring with a bounded palette. We keep the
large original colour classes unchanged and call their labels
\emph{protected}; we merge the small classes into \emph{light}
labels. For a palette $[R]$ with $0\le h\le R$, the labels $[h]$
are protected, and unused labels are allowed.

A \emph{merged colouring} of $V\subseteq U$ is obtained by deleting
whole colour classes of the original colouring of $U$ and merging
some of the remaining ones. Protected classes stay unchanged;
only light classes are merged, and no original class is split.
Merging can create false colour equalities. We therefore count
only the following solutions.

\begin{definition}[Admissible solutions]\label{def:admissible}
Let $\chi$ be a colouring of $U\subseteq\N$, with some labels
designated as protected. A proper solution
$\mathbf x=(x_1,\ldots,x_k)\in U^k$ of $A\mathbf x=0$ is
\emph{admissible} if it is canonical and every colour appearing
on two or more coordinates is protected.
\end{definition}

Thus every rainbow solution is admissible. For the Schur pattern
$\chi(x_1)=\chi(x_3)\ne\chi(x_2)$, the common colour of $x_1$ and
$x_3$ must be protected, while the colour of $x_2$ may be light
or protected. In any admissible solution of a merged colouring,
each light label occurs at most once. Undoing the mergers therefore
preserves its complete colour-equality pattern.

Section~\ref{subsec:colour-reduction} proves the reduction by
choosing a finite sequence of size cutoffs. In every colouring,
some interval between consecutive cutoffs contains colour classes
covering few vertices. Deleting these classes separates a bounded
number of large classes from arbitrarily many small ones. The
large classes become protected, and the small ones are merged into
a bounded number of light classes without making any of them
large. The resulting palette size is fixed before the colouring,
though the number of protected labels may depend on the colouring. The
reduction accepts any prescribed upper bound $a_h$
for the light classes when there are $h$ protected labels; the
next step supplies bounds for which admissible solutions are
abundant.

\medskip
\Needspace{4\baselineskip}
\noindent\textbf{Supersaturation.}
We need a dense count that applies after colour reduction. First
we ask how much of the interval can be removed while the dense
canonical theorem still holds.

\begin{definition}[Canonical density gap]\label{def:density-gap}
A number $\sigma\in(0,1)$ is a \emph{canonical density gap} for
$(A,\cI)$ if, for all sufficiently large $n$,
\[
 S\subseteq[n],\qquad |S|\ge(1-\sigma)n
 \quad\Longrightarrow\quad S\canarrow A.
\]
\end{definition}

Lefmann's theorem~\cite{Lefmann} gives a finite interval on which
every colouring has a proper canonical solution. For large $n$,
every subset of $[n]$ whose size is sufficiently close to $n$
contains a full dilate of that interval, giving the following
consequence.

\begin{lemma}\label{lem:gap}
There are an integer $M\ge k$ and a canonical density gap
$\sigma$ for $(A,\cI)$ such that $[M]\canarrow A$.
\end{lemma}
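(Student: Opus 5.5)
The plan is to take $M$ from Lefmann's canonical theorem and then obtain $\sigma$ by a dilate-counting (averaging) argument. Lefmann's theorem~\cite{Lefmann} says that for the fixed partition $\cI$ witnessing the columns condition, there is a finite $N$ such that every colouring of $[N]$ has a solution of $A\mathbf x=0$ that is monochromatic, rainbow, or has exactly the block pattern $\cI$. The first point is to ensure this solution can be taken \emph{proper}. Since $A$ is irredundant, I would use the version of Lefmann's theorem for irredundant systems, which yields proper solutions (Deuber's $(m,p,c)$-sets contain proper solutions when a proper positive solution exists). If only the unrestricted version is available, I would first pass to a Deuber set all of whose relevant solutions are proper; this is the standard route. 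Put $M=\max(N,k)$. Then $[M]\canarrow A$, because restricting a colouring of $[M]$ to $[N]$ gives a proper canonical solution in $[N]^k\subseteq[M]^k$.

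Next, transfer $[M]\canarrow A$ to any dilate-translate $a+d[M]=\{a+d,\dots,a+dM\}$ with $d\ge1$. This needs care: $A\mathbf x=0$ is invariant under dilation, but under translation only if each row sums to zero, which fails in general. So I would use \emph{pure dilates} $d\cdot[M]=\{d,2d,\dots,Md\}$ only. The map $x\mapsto dx$ sends proper solutions to proper solutions and preserves colour patterns, so every colouring of $d\cdot[M]$ has a proper canonical solution. Consequently, it suffices to show that any $S\subseteq[n]$ with $|S|\ge(1-\sigma)n$ contains some full dilate $d\cdot[M]$ with $d\ge1$, $dM\le n$. Then any colouring of $S$ restricts to that dilate and yields a proper canonical solution in $S^k$.

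The existence of a full dilate is a union bound. Let $D=\lfloor n/M\rfloor$ and consider the $D$ dilates $d\cdot[M]$ for $1\le d\le D$. Each missing element $y\in[n]\setminus S$ lies in at most $M$ of them, since $y=jd$ determines $d$ for each $j\in[M]$. Hence at most $M\sigma n$ dilates meet the complement. If $M\sigma n<D$, some dilate lies inside $S$. Choosing $\sigma=1/(2M^2)$ gives $M\sigma n=n/(2M)<\lfloor n/M\rfloor$ for all sufficiently large $n$, so $\sigma\in(0,1)$ is a canonical density gap.

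The main obstacle is the first step: extracting \emph{proper} canonical solutions, with the \emph{prescribed} partition $\cI$, from Lefmann's theorem. The averaging argument is routine.
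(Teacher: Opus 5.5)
The overall plan is the same as the paper's: get a finite interval $[M]$ with $[M]\canarrow A$ from Lefmann's theorem, then find a full pure dilate $d\cdot[M]$ inside any dense $S\subseteq[n]$. Your averaging step is correct. The paper takes $\sigma=1/(4M^2)$; your $1/(2M^2)$ also works once $n\ge 2M$, and you are right to avoid translates.

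The gap is the first step, which you flag as the main obstacle but only gesture at ("the version for irredundant systems", "pass to a Deuber set \dots; this is the standard route"). This step is the actual content of the lemma. If a finite, proper, prescribed-pattern form of Lefmann's theorem could be quoted verbatim, citing it would suffice. But you do not pin such a statement down, and your hedge suggests you were not sure it exists.

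In the form the paper uses, Lefmann's theorem concerns colourings of a positive $(m,P,c_0)$-set. The colouring is monochromatic, injective, or constant on each row $D_r$ with distinct row colours; nothing in it refers to $A$. To convert it, you must exhibit, inside \emph{every} positive $(s+1,P,c_0)$-set, a proper solution whose coordinates indexed by $I_r$ all lie in row $D_r$. Only then do the three alternatives become monochromatic, rainbow, and exactly the pattern $\cI$. Knowing merely that the Deuber set "contains proper solutions" is not enough, because the blocks must sit in distinct rows.

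The paper does this with explicit forms:
\begin{itemize}
\item For $i\in I_r$ set $L_i(\mathbf u)=c_0u_r-c_0\sum_{t=r+1}^{s}b_{t,i}u_t+v_iu_{s+1}$. Here the $b_{t,i}$ are the columns-condition coefficients, $c_0$ clears their denominators, and $\mathbf v$ is a proper positive solution.
\item The columns condition and $A\mathbf v=0$ give $A(L_1,\ldots,L_k)^\top=0$, and $L_i\in D_r$ once $P$ exceeds all the coefficients.
\item The extra generator $u_{s+1}$ carrying $\mathbf v$ is what makes the solution proper. Without it, every $L_i$ with $i\in I_s$ equals $c_0u_s$, so the solution is improper whenever $|I_s|\ge2$.
\item With it, $L_i-L_j$ is a nonzero integer form, since its $u_{s+1}$-coefficient is $v_i-v_j\ne0$, and its coefficients are bounded by $2B$.
\item Positivity of the rows forces $c_0u_r>P\sum_{t>r}u_t$. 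So with $P>2c_0B$ the first nonzero term dominates and $L_i\ne L_j$.
\end{itemize}

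Finally, if you start instead from Lefmann's theorem for colourings of $\N$, you also need a compactness step to get a finite $N$. On $[N]$ only the equality pattern of a colouring matters, so there are finitely many patterns and K\"onig's lemma applies. The Deuber-set version is finite and avoids this.
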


We give the details, including properness and the prescribed block
pattern, in Appendix~\ref{app:finite}. In the argument below, any
canonical density gap may be used.

Section~\ref{subsec:supersaturation} proves that a colouring of
$S\subseteq[n]$ with $|S|\ge(1-\sigma+\zeta)n$ has
$\Omega(n^{k-\ell})$ admissible solutions whenever the number $h$
of protected labels is fixed and every light class is sufficiently
small. The permitted size depends on $h$, but the number of light
labels is unrestricted. To obtain this count, we first give every
light vertex a fresh colour. Arithmetic removal then turns the
dense existence result into many solutions. Restoring the light
colours loses few of them, since each light class is small and
fixing two coordinate values leaves few completions. We record
the underlying solution counts and degree bounds using the ordered
hypergraph $H_n$ on $[n]$, whose edges are the ordered proper
solutions of $A\mathbf x=0$. These bounds supply the sequence
$(a_h)$ used when we apply colour reduction.

\medskip
\Needspace{4\baselineskip}
\noindent\textbf{Containers and the final count.}
The first two steps leave one bounded palette and finitely many
choices for its protected labels. It remains to transfer the dense
admissible count to the random
set, simultaneously over all colourings with small light classes.
Lemma~\ref{lem:palette-count}, the main result of
Section~\ref{subsec:containers}, gives this uniform count.
To prove it, we encode the solutions as edges so that the
container theorem can be used.
Fix integers $R\ge h\ge1$, with $[h]$ protected. Define the
$k$-uniform multihypergraph $G_n=G_n(A,\cI;h,R)$ as follows.
\begin{itemize}[leftmargin=*,itemsep=0.25em]
\item \textbf{Vertex set:}
\[
 V(G_n)=[n]\times[R].
\]
The vertex $(x,i)$ means that the integer $x$ receives label $i$.
Thus $v(G_n)=Rn$.
\item \textbf{Hyperedges:} for each ordered proper solution
$\mathbf x=(x_1,\ldots,x_k)\in[n]^k$ and each admissible canonical
label list $(i_1,\ldots,i_k)\in[R]^k$, add one copy of
\[
 \{(x_1,i_1),\ldots,(x_k,i_k)\}.
\]
The list is monochromatic, rainbow, or has the prescribed block
pattern, and every repeated label belongs to $[h]$.
\end{itemize}
Each edge uses $k$ distinct integers. Each pair consisting of an
ordered solution and a label list contributes one edge copy, even
if the list satisfies more than one canonical alternative.
Different pairs may give the same set of labelled vertices, and
we retain all such copies. The hypergraph is fixed before any
colouring or random set is chosen.

A colouring $\varphi:U\to[R]$ of $U\subseteq[n]$ selects the set
\[
 X_\varphi=\{(x,\varphi(x)):x\in U\}.
\]
The edge multiplicities give the exact identity
\[
 e(G_n[X_\varphi])=
 \#\{\text{ordered admissible solutions in }\varphi\}.
\]
We can now study colourings with few solutions through sets
spanning few edges of $G_n$.

To obtain a uniform count on $[n]_p$, we sample
$M=\Theta(np^{1-k})$ edge copies of $G_n$ independently of the
random set. A colouring with few admissible solutions has a useful
chance of avoiding the sample. After controlling its degrees,
the container theorem covers these independent sets with
containers specified by small fingerprints and sparse in $G_n$.

To use the dense count against such a container, we must read
it as a set of allowed labels. A set
$\mathcal C\subseteq[n]\times[R]$ \emph{supports} $\varphi$ if
$X_\varphi\subseteq\mathcal C$. It may offer several labels at
one integer. We group integers that have the same choices by
putting, for $D\subseteq[R]$,
\[
 B_D(\mathcal C)=
 \{x\in[n]:\{i\in[R]:(x,i)\in\mathcal C\}=D\}.
\]
These sets partition $[n]$; we write $B_D$ when $\mathcal C$ is fixed.

Suppose that a container supports a colouring of a large part of
$[n]_p$ with small light classes. If no group $B_D$ contains
much more of $[n]_p$ than $p|B_D|$, we can spread the colour counts
within each group to obtain a supported colouring of a dense subset
of $[n]$. Its light classes remain small, so supersaturation
contradicts the container's edge bound. Such a colouring therefore
forces an upper-tail deviation in some $B_D$. The fingerprint fixes
the container; Chernoff's inequality and a union bound control
these deviations. Comparing this estimate with the chance that
sampling misses a colouring proves the uniform count.

\medskip

\section{The 1-statement}\label{sec:counting}

Throughout this section, fix an irredundant partition-regular
matrix $A\in\Z^{\ell\times k}$ with $k\ge3$ and
$\rank(A)=\ell\ge1$, a prescribed partition
$\cI=(I_0,\ldots,I_s)$ satisfying the columns condition, and a
canonical density gap $\sigma\in(0,1)$ for $(A,\cI)$.
We use the notation of Section~\ref{subsec:one-notation}.
Constants may depend on $A,\cI,\sigma$ and on the other fixed
parameters in a statement. The main target is as follows.

\begin{proposition}\label{prop:counting}
For every $0<\varepsilon\le\sigma$, there are $C,c,\gamma>0$ such
that, for all sufficiently large $n$ and
$Cn^{-1/m(A)}\le p\le1$, with probability at least $1-e^{-cnp}$,
every colouring of every $U\subseteq[n]_p$ with
$|U|\ge(1-\sigma+\varepsilon)|[n]_p|$ has at least
$\gamma p^k n^{k-\ell}$ ordered proper canonical solutions.
\end{proposition}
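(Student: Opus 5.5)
We prove Proposition~\ref{prop:counting} in three stages, following the overview in Section~\ref{subsec:one-notation}: a deterministic colour reduction, a dense supersaturation count for reduced colourings, and a container transfer to $[n]_p$. Throughout, fix $0<\varepsilon\le\sigma$ and put $\zeta=\varepsilon/4$.

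\textbf{Colour reduction.} Let $\chi$ be an arbitrary colouring of $U\subseteq[n]_p$ with $|U|\ge(1-\sigma+\varepsilon)|[n]_p|$. Let $(a_h)_{h\ge0}$ be a decreasing sequence of light-class bounds, to be supplied by supersaturation. Define cutoffs $1>t_1>t_2>\cdots>t_L$ recursively, each $t_{j+1}$ chosen tiny relative to $t_j$ and to $a_{h}$ for $h\le 1/t_j$, with $L\approx 1/\zeta$. By pigeonhole there is an index $j$ such that the colour classes of size in $[t_{j+1}|U|,t_j|U|)$ cover at most $\zeta|U|$ vertices; delete them. The classes of size at least $t_j|U|$ number at most $h\le 1/t_j$ and become protected. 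The classes of size below $t_{j+1}|U|$ are merged greedily into light labels of size between $a_h|U|/2$ and $a_h|U|$, so there are at most $O(1/a_h)$ light labels. The palette size $R$ is thus bounded in terms of $\zeta$ and the sequence alone. The surviving set $V$ has $|V|\ge(1-\sigma+3\zeta)|[n]_p|$, and by the remark after Definition~\ref{def:admissible} every admissible solution of the merged colouring is canonical for $\chi$.

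\textbf{Supersaturation.} This is the dense statement. For fixed $h$, there are $a_h,\beta_h>0$ such that every colouring of $S\subseteq[n]$ with $|S|\ge(1-\sigma+\zeta)n$ and light classes of size at most $a_hn$ has at least $\beta_hn^{k-\ell}$ admissible solutions. The proof proceeds in three steps.
\begin{enumerate}
\item Recolour every light vertex with a fresh colour, so that only $h$ colours repeat.
\item Combine the density gap with arithmetic removal (for instance Kr\'al'--Serra--Vena). If there were $o(n^{k-\ell})$ canonical proper solutions in which each repeated colour is protected, then deleting $o(n)$ elements would destroy all of them. The resulting set still has size at least $(1-\sigma)n$ yet admits no canonical solution for a colouring with finitely many repeated colours, which contradicts $S\canarrow A$. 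Removal is applied to the finitely many patterns involving protected colours, with the fresh-colour positions left free.
\item Restore the light colours. A solution becomes inadmissible only if two coordinates share a light class. Fixing one coordinate and then a second in the same class gives at most $n\cdot a_hn\cdot O(n^{k-\ell-2})$ such solutions, by the degree bound $\Delta_2(H_n)=O(n^{k-\ell-2})$, which holds since $r(W)=\ell'$ grows properly for irredundant $A$. Choosing $a_h$ small compared with the removal constant finishes the step.
\end{enumerate}

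\textbf{Containers.} This is the main obstacle. Apply the hypergraph container theorem to $G_n(A,\cI;h,R)$ for each of the finitely many pairs $(h,R)$. One has $\Delta_j(G_n)\lesssim n^{k-\ell-(\text{rank drop})}$; the definition of $m(A)$ gives exactly the codegree condition at $\tau\approx n^{-1/m(A)}$, and hence at $p\ge Cn^{-1/m(A)}$. The output is a family of containers $\mathcal C$, each with $e(G_n[\mathcal C])\le\eta n^{k-\ell}$, specified by fingerprints of size $O(\tau Rn)$.

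Now suppose a reduced colouring $\varphi$ of $V$ has fewer than $\gamma p^kn^{k-\ell}$ admissible solutions. Sampling edges as in the overview, or using the standard trick of passing to a maximal independent subset, places $X_\varphi$ inside some container after deleting few vertices. Spread the counts within each group $B_D$ to obtain a supported colouring of a set of density at least $1-\sigma+\zeta$ in $[n]$ whose light classes are at most $2a_hn$, which requires choosing the light bound in the reduction as $a_h/2$. Supersaturation then contradicts the container's edge bound unless some $B_D$ satisfies $|[n]_p\cap B_D|\ge(1+\zeta)p|B_D|+\zeta pn/2^R$. By Chernoff, that event has probability $e^{-\Omega(pn)}$ for a fixed container, while there are only $e^{O(\tau n\log)}$ fingerprints.

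The delicate point is beating the union bound when the fingerprint itself lies in $[n]_p$. Handle this by the standard conditioning: a fingerprint $F$ is revealed with probability $p^{|F|}$, and the remainder of $[n]_p$ is independent. This gives the exponent $-cnp$ once $C$ is large. Finally, union over the boundedly many $(h,R)$ and protected-label choices.
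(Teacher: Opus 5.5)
\textbf{Verdict: the plan has a genuine gap at the step that turns few solutions into membership in a sparse container.}

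Your colour reduction, the fresh-colour/removal supersaturation with the light-collision count, the spreading of label counts within the groups $B_D$, and the conditioning on the fingerprint all follow the paper's route.

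The gap is the step you call the main obstacle: placing a colouring with \emph{fewer than} $\gamma p^kn^{k-\ell}$ admissible solutions, rather than none, inside a container that is sparse in $G_n$. Containers of $G_n$ only cover independent sets of $G_n$.

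Your fallback of passing to a maximal independent subset does not work. It can delete up to $\gamma p^kn^{k-\ell}$ vertices, which is $\gamma p^{k-1}n^{k-\ell-1}$ times $np$. This ratio is unbounded once $p\gg n^{-(k-\ell-1)/(k-1)}$. For $A=(1,1,-1,0)$, where $m(A)=2>3/2$, it is already $\gamma C^3n^{1/2}$ at $p=Cn^{-1/2}$. So the deletion destroys the density that supersaturation needs.

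The sampling alternative is the paper's mechanism. As you state it, however (containers of $G_n$, with sampling ``placing $X_\varphi$ inside some container''), it is not correct, and the parts that make it work are missing. A sample of $M=\Theta(n\tau^{1-k})$ edges misses $X_\varphi$ only with probability about $\exp(-\Theta(\gamma np^k\tau^{1-k}))$. Nothing is placed in a container deterministically. Two further steps are needed.

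(a) You must apply the container theorem to the sample $H$, not to $G_n$. This requires deleting a few edges so that $H$ meets the codegree hypotheses. It also requires defining $\mathcal F$ by richness in $G_n$ and showing these sets stay rich in $H$ simultaneously. Only then are the containers for $H$ sparse in $G_n$.

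(b) You must compare two probabilities: $\tfrac12e^{-A_0\gamma np}\Pr(\mathcal B)\le e^{-c_*np}$. Here $\mathcal B$ is the event that some qualifying colouring has few solutions. The term $e^{-c_*np}$ bounds the probability that some qualifying colouring is independent in $H$.

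Step (b) forces $\tau=\alpha p$ rather than $\tau\approx n^{-1/m(A)}$. Otherwise, for larger $p$, the miss probability falls far below $e^{-c_*np}$. It also forces $\gamma$ to be chosen after $\alpha$ and $c_*$. This comparison is where the count $\gamma p^kn^{k-\ell}$ actually comes from, and it is absent from your argument.
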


\subsection{Colour reduction}\label{subsec:colour-reduction}

We first show that any colouring can be reduced to a bounded palette
after deleting few vertices. The reduction accepts arbitrary upper
bounds $(a_h)$ on the light classes; supersaturation will provide
the bounds needed in the final application.

\begin{lemma}\label{lem:reduction}
For every $\eta\in(0,1)$ and every sequence $(a_h)_{h\ge1}$
with $a_h\in(0,1/4]$, there are integers $1\le H<R$,
depending only on $\eta$ and $(a_h)$, with the following property.
Let $W$ be any finite set, $U\subseteq W$, and
$\chi:U\to\mathcal C$ any colouring. Then there are $V\subseteq U$,
an integer $1\le h\le H$, and a colouring $\varphi:V\to[R]$
such that:
\begin{enumerate}
\item $V$ is obtained from \(U\) by deleting some colour classes of \(\chi\) and
$$|U\setminus V|\le\eta|W|;$$
\item $\varphi$ does not split any $\chi$-colour class in $V$.
For each $i\in[h]$, the class $\varphi^{-1}(i)$ is either
empty or exactly one $\chi$-colour class. These labels are
\emph{protected}; the labels $h<i\le R$ are \emph{light}, and
each of their classes satisfies
\[
 |\varphi^{-1}(i)|\le\frac{a_h}4|W|
 \qquad(h<i\le R);
\]
\item for every ordered $k$-tuple $(x_1,\ldots,x_k)$ of distinct
vertices of $V$ in which no light label occurs twice under
$\varphi$, we have, for all $r,s\in[k]$,
\[
 \chi(x_r)=\chi(x_s)
 \quad\Longleftrightarrow\quad
 \varphi(x_r)=\varphi(x_s).
\]
\end{enumerate}
Unused labels in $[R]$ are allowed. In particular, every
admissible solution for $\varphi$ is a proper canonical
solution for $\chi$.
\end{lemma}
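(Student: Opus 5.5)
We will prove the lemma by the cutoff-sequence argument sketched in Section~\ref{subsec:one-notation}, working throughout with sizes measured relative to $|W|$.

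\textbf{Cutoffs and pigeonhole.} Set $T=\lceil 1/\eta\rceil+1$. Define thresholds $\tau_0>\tau_1>\cdots>\tau_T>0$ and integers $h_0<h_1<\cdots<h_T$ recursively. Start with $\tau_0=1$ and $h_0=1$. Given $\tau_j$, put $h_{j+1}=h_j+\lfloor 1/\tau_j\rfloor$, which bounds the number of classes of size at least $\tau_j|W|$, and
\[
\tau_{j+1}=\min\Bigl(\tau_j,\ \min_{h\le h_{j+1}}\frac{a_h}{4}\Bigr)\cdot\frac12 .
\]
Finally take $H=h_T$. For a given $\chi$, call a class of size in $[\tau_{j+1}|W|,\tau_j|W|)$ a \emph{level-$j$ class}. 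The $T$ levels are disjoint families, so their total sizes sum to at most $|U|\le|W|$. Hence some level $j<T$ has total size at most $|W|/T\le\eta|W|$. Delete all level-$j$ classes to obtain $V$; this gives (i).

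\textbf{Protected and light labels.} The classes of size at least $\tau_j|W|$ number at most $1/\tau_j\le h_{j+1}$. We assign them injectively to labels in $[h]$ with $h=h_{j+1}\le H$; unused protected labels are allowed. Every remaining class has size less than $\tau_{j+1}|W|\le\frac{a_h}{8}|W|$. These small classes are merged greedily into light bins. Add whole classes to the current bin until its size exceeds $\frac{a_h}{8}|W|$, then start a new bin. Each bin then has size at most $\frac{a_h}{8}|W|+\frac{a_h}{8}|W|=\frac{a_h}{4}|W|$. Every closed bin has size more than $\frac{a_h}{8}|W|$, so the number of bins is at most $8/a_h+1$.

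We therefore set
\[
R=H+\max_{h\le H}\lceil 8/a_h\rceil+1,
\]
which depends only on $\eta$ and $(a_h)$. We also have $R>H$. Label the bins $h+1,\dots$ within $[R]$. Protected classes are single $\chi$-classes and no class is split, which gives (ii).

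\textbf{Colour equalities.} For (iii), take distinct $x_r,x_s\in V$. If $\chi(x_r)=\chi(x_s)$, then $\varphi(x_r)=\varphi(x_s)$ because no class is split. Conversely, suppose $\varphi(x_r)=\varphi(x_s)$. If the common label is protected, it is a single $\chi$-class, so $\chi(x_r)=\chi(x_s)$. A light label cannot occur twice in the tuple by hypothesis, so this case does not arise. The final sentence of the lemma follows directly. An admissible solution for $\varphi$ has only protected labels repeated, so (iii) applies to it. Then $\varphi$ and $\chi$ induce the same equality pattern on its coordinates. Being monochromatic, rainbow, or having the prescribed block pattern depends only on this pattern, so the solution is canonical for $\chi$.

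\textbf{Main obstacle.} The only delicate point is making $H$ and $R$ independent of $\chi$. The light-class bound $a_h$ depends on $h$, and $h$ depends on which level is deleted. This is why the recursion must take the minimum of $a_h$ over all $h\le h_{j+1}$ before fixing $\tau_{j+1}$. The requirement $a_h\le 1/4$ is not actually needed in this argument beyond ensuring positivity.
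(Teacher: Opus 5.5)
Your proof is correct and follows essentially the same route as the paper: a cutoff sequence fixed in advance, pigeonhole to find a size window whose classes cover few vertices, deletion of those classes, protection of the few large classes, and merging of the small classes into boundedly many light classes. Your greedy bin-packing replaces the paper's repeated merging of pairs of classes of size at most $s/2$, with the same effect. Since you set $h=h_{j+1}$ exactly, using $a_{h_{j+1}}/4$ alone would suffice, as the paper does with $t_{j+1}=\min\{t_j/2,a_{h_j}/4\}$; your minimum over all $h'\le h_{j+1}$ is harmless but not actually forced.
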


\begin{proof}
We first choose all parameters without reference to the colouring.
Set $L=\lceil1/\eta\rceil$ and $t_0=1$. For $0\le j<L$,
define successively $h_j=\lceil1/t_j\rceil$ and
$t_{j+1}=\min\{t_j/2,a_{h_j}/4\}$. Then
$$1=t_0>t_1>\cdots>t_L>0.$$
Set $H=h_{L-1}$ and $R=H+\lceil4/t_L\rceil$. In particular,
$h_j\le H$ and $t_{j+1}\ge t_L$ for every $j<L$.

Now fix $W,U$ and $\chi$. If $W=\varnothing$, take
$V=\varnothing$, $h=1$, and the empty colouring $\varphi$.
Assume henceforth that $W\ne\varnothing$. For each
$0\le j<L$, consider the original $\chi$-colour classes
whose sizes lie in $(t_{j+1}|W|,t_j|W|]$.
These $L$ collections of classes are disjoint, and together
they contain at most $|U|$ vertices. Hence, for some $j<L$,
the classes in this size interval contain at most
$|U|/L\le\eta|W|$ vertices. Delete precisely these classes
and let $V$ be the set that remains. This gives~\textup{(i)}.

Each original colour class in $V$ now has size either greater
than $t_j|W|$ or at most $t_{j+1}|W|$. There are fewer than
$1/t_j\le h_j$ classes of the first kind, since they are
disjoint subsets of $W$. Set $h=h_j$, assign these large
classes distinct labels in $[h]$, and leave any other
protected labels unused. Thus each nonempty protected class
is one unchanged original class.

Write $s=t_{j+1}|W|$. Start with the remaining original classes, each of size at most $s$, and repeatedly merge any two current classes of size at most $s/2$. Every class formed in this way is still a union of whole original classes and has size at most $s$. The process terminates; then at most
one current class has size at most $s/2$. If $m$ classes remain, all but possibly one have size greater than $s/2$.
As their total size is at most $|W|$, we have
\[
 m<\frac{2}{t_{j+1}}+1\le\frac{4}{t_L}.
\]
There are at least $R-h\ge\lceil4/t_L\rceil$ light labels available, so give the $m$ classes distinct labels $h+1,\ldots,R$ and leave any unused labels empty. Since
$t_{j+1}\le a_{h_j}/4=a_h/4$, each light class has size
at most $(a_h/4)|W|$. This proves~\textup{(ii)}.

Take a $k$-tuple as in~\textup{(iii)}. If two of its vertices
have the same $\chi$-colour, they lie in one original class,
which was not split, so they have the same $\varphi$-label.
Conversely, if two distinct vertices have the same
$\varphi$-label, that label cannot be light by assumption.
It is protected and its class is one original $\chi$-class,
so the vertices also have the same $\chi$-colour. This proves
the equivalence in~\textup{(iii)}. By definition, an
admissible solution has no repeated light label; its complete
colour pattern is therefore preserved when the mergers are undone.
\end{proof}

\subsection{Supersaturation}\label{subsec:supersaturation}

The reduction leaves protected classes unchanged and keeps every light
class small. We now show that these two properties guarantee many
admissible solutions on a dense subset of $[n]$.

\begin{lemma}\label{lem:densecount}
For every integer $h\ge0$ and $\zeta\in(0,\sigma)$, there are
$a_h\in(0,1/4]$ and $\beta_h>0$, depending only on
$A,\cI,\sigma,h,\zeta$, with the following property for all
sufficiently large $n$. Every colouring of every $S\subseteq[n]$
with $|S|\ge(1-\sigma+\zeta)n$, using $h$ protected labels and
with every light class of size at most $a_hn$, has at least
$\beta_hn^{k-\ell}$ ordered admissible solutions.
\end{lemma}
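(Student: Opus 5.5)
The plan follows the overview in Section~\ref{subsec:one-notation}. First give every light vertex a fresh colour, then use arithmetic removal to turn existence into many solutions, and finally restore the light colours.

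Fix a colouring $\varphi$ of $S$ with protected labels $[h]$. Define $\psi$ on $S$ by $\psi(x)=\varphi(x)$ when $\varphi(x)\in[h]$. When $\varphi(x)$ is light, let $\psi(x)$ be a new colour unique to $x$. Every proper canonical solution for $\psi$ is admissible for $\psi$, because any colour repeated under $\psi$ is protected. The key claim is that $\psi$ has at least $\beta' n^{k-\ell}$ proper canonical solutions, with $\beta'>0$ depending only on $A,\cI,\sigma,\zeta,h$ and not on the light classes.

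To prove the claim, encode $\psi$ as a colouring with boundedly many classes. Colour each $x\in S$ by $\varphi(x)$ if this label is protected, and by a single extra symbol $\ast$ otherwise. This gives $h+1$ colour classes plus the complement $[n]\setminus S$, of size at most $(\sigma-\zeta)n$. Call a pattern $\tau\colon[k]\to[h]\cup\{\ast\}$ \emph{bad} if no proper solution whose coordinates are coloured according to $\tau$ is canonical for $\psi$. Here $\ast$-coordinates count as pairwise distinct colours, so canonicity depends only on $\tau$.

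Suppose, for contradiction, that the number of proper solutions in $S$ whose pattern is not bad is at most $\epsilon n^{k-\ell}$. Apply the arithmetic removal lemma for the linear system $A\mathbf x=0$, coloured version (Král'--Serra--Vena), to the good-pattern solutions, together with the non-proper solutions, which number $O(n^{k-\ell-1})$. This deletes $o(n)$ elements of $S$ and leaves a set $S'$ with $|S'|\ge(1-\sigma)n$ containing no proper canonical solution for $\psi|_{S'}$. That contradicts Definition~\ref{def:density-gap}. Hence $\psi$ has at least $\beta' n^{k-\ell}$ proper canonical solutions, and $\beta'$ depends only on the removal lemma for the fixed palette size $h+1$, since there are finitely many patterns. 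This removal step is the main technical point. It needs care because non-proper solutions must be excluded and the lemma must be applied with $k$ colour classes for each pattern, but this is standard.

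Next restore the light colours. A proper canonical solution $\mathbf x$ for $\psi$ fails to be admissible for $\varphi$ only if two coordinates $x_r,x_s$ carry the same light label of $\varphi$. In that case $\varphi$ either gives the solution a false equality or repeats a light label, and either way it stops being admissible. For fixed $r\ne s$, the number of proper solutions with $x_s$ in the same light class as $x_r$ is bounded as follows. Choose $x_r$ in at most $n$ ways and $x_s$ in its class in at most $a_h n$ ways. The number of completions of a fixed pair $(x_r,x_s)$ is at most $O(n^{k-\ell-1})$: irredundance forces the projection $\pi_{\{r,s\}}(\mathcal V)$ to be 2-dimensional, so fixing two coordinates cuts the solution space by two dimensions. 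More precisely, fixing two coordinates leaves an affine space of dimension $k-\ell-2$ if the projection has rank $2$, giving $O(n^{k-\ell-2})$ completions. Thus the lost solutions number at most $k^2 a_h\, O(n^{k-\ell})$. Choose $a_h\le1/4$ so small that this is at most $\beta' n^{k-\ell}/2$, and set $\beta_h=\beta'/2$. A proper solution admissible for $\psi$ with no repeated light $\varphi$-label has the same equality pattern under $\varphi$, so it is admissible for $\varphi$. This proves the lemma, with $a_h$ and $\beta_h$ depending only on $A,\cI,\sigma,h,\zeta$.
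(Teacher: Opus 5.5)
Your proposal is correct and follows the paper's proof essentially step for step. You give fresh colours to the light vertices, apply removal to the at most $(h+1)^k$ pattern products (together with the $O(n^{k-\ell-1})$ nonproper solutions) to contradict the density gap, and then bound the loss from restoring light colours by $\binom{k}{2}a_hn^2\cdot n^{k-\ell-2}$.

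One justification needs fixing. Irredundance alone does not make $\pi_{\{r,s\}}(\mathcal V)$ two-dimensional: $(2,-1,0)$ is irredundant, yet every solution has $x_2=2x_1$. You must also use the columns condition, exactly as in Lemma~\ref{lem:rank}. Only the resulting $n^{k-\ell-2}$ completion bound makes the loss $O(a_hn^{k-\ell})$; the $O(n^{k-\ell-1})$ bound you state first would not suffice.
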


We use the following rank, counting and removal estimates.

\begin{lemma}\label{lem:rank}
Deleting at most two columns preserves the rank of $A$.
Consequently, $k-\ell\ge2$, every denominator defining $m(A)$ is
positive, and $m(A)>1$.
\end{lemma}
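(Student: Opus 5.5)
The plan is to prove the rank claim from irredundance, then derive the three consequences from it by elementary linear algebra.

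\emph{Step 1: one column.} Fix a proper positive solution $\mathbf x\in\N^k$ of $A\mathbf x=0$. Suppose deleting column $i$ drops the rank. Then there is a row vector $\mathbf y$ with $\mathbf y A=\lambda\mathbf e_i^{T}$ for some $\lambda\ne0$: the functional vanishing on the other columns but not on $\mathbf a_i$ exists precisely when $\mathbf a_i\notin\Span\{\mathbf a_j:j\ne i\}$. Applying $\mathbf y$ to $A\mathbf x=0$ gives $\lambda x_i=0$. This contradicts $x_i\ge1$, so no single column deletion drops the rank.

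\emph{Step 2: two columns.} Suppose deleting columns $i\ne j$ drops the rank. Then some nonzero row vector $\mathbf y$ annihilates all columns except $i,j$ and satisfies $(\mathbf yA)_i=\alpha$, $(\mathbf yA)_j=\beta$ with $(\alpha,\beta)\ne(0,0)$. Every solution then satisfies $\alpha x_i+\beta x_j=0$.
\begin{itemize}
\item If exactly one of $\alpha,\beta$ vanishes, we get a single-column contradiction as in Step 1.
\item If both are nonzero, every rational solution satisfies $x_j=-(\alpha/\beta)x_i$.
\end{itemize}
Positivity of $\mathbf x$ forces $-\alpha/\beta>0$, and properness forces $-\alpha/\beta\ne1$.

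Now use partition regularity. Since $A\mathbf x=0$ has monochromatic solutions in every finite colouring, the single equation $x_j=qx_i$ with $q>0$ rational must be partition regular. That requires $q=1$: colour $\N$ by the parity of the $q$-adic-type valuation, i.e.\ by $\lfloor \log_q x\rfloor$ modulo $2$ when $q>1$. Refinements of this colouring avoid $x_j=qx_i$, and the case $q<1$ is symmetric. The standard finite colouring is by $v_p(x)\bmod 2$ for a prime $p$ with $v_p(q)\ne0$. Then $q=1$, which contradicts properness. A cleaner route is to cite the columns condition directly: the relation $\alpha\mathbf e_i+\beta\mathbf e_j$ in the row space must be consistent with some block summing to zero. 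I expect this step to be the main obstacle, namely making the use of partition regularity versus irredundance airtight.

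\emph{Step 3: consequences.} Deleting two columns leaves $k-2$ columns of rank $\ell$, so $k-2\ge\ell$, that is, $k-\ell\ge2$. For $|J|\ge2$, pick $J'\subseteq J$ with $|J'|=2$. Adding the columns of $J\setminus J'$ back one at a time raises rank by at most one each time. Since $\rank A_{\overline{J'}}=\ell$, this gives
\[
\rank A_{\overline J}\ge\ell-(|J|-2),
\]
so the denominator $|J|-1+\rank(A_{\overline J})-\ell$ is at least $1>0$. For $m(A)>1$, take $J=[k]$. The numerator is $k-1$, and the denominator is
\[
k-1+0-\ell<k-1
\]
because $\ell\ge1$. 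This term therefore exceeds $1$, so the maximum does too.
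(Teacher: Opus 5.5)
Your proof is correct; Step~3 is essentially the paper's derivation of the consequences, and the difference lies in how you rule out a rank drop.

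\textbf{The paper's route.} It treats one and two deleted columns at once, using the fixed columns-condition partition. If $\mathbf uA=(c_1,\ldots,c_k)$ is nonzero with support of size at most two, apply $\mathbf u$ to the columns condition \eqref{eq:columns}. In the first block $I_r$ meeting the support, all earlier $c_i$ vanish, so $\sum_{i\in I_r}c_i=0$. This forces the support to be two positions $i,j\in I_r$ with $c_i=-c_j$. Hence $x_i=x_j$ on all of $\ker A$, contradicting irredundancy.

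\textbf{Your route.} You use positivity for a single column. For two columns you use the bare definition of partition regularity, together with the $2$-colouring $x\mapsto\lfloor\log_q x\rfloor\bmod 2$ (with $\log_{1/q}$ when $q<1$). This colouring has no monochromatic pair $(x,qx)$, because the two floors differ by exactly one. The argument is airtight, and it needs neither Rado's theorem nor the partition. The paper's version is shorter and avoids your case split. It is purely linear-algebraic, given the partition that is fixed anyway.

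\textbf{A small slip.} Your alternative colouring $v_p(x)\bmod 2$ fails when $v_p(q)$ is even: for $q=4$ it gives $1$ and $4$ the same colour. The colouring $\lfloor v_p(x)/|v_p(q)|\rfloor\bmod 2$ would work instead. Nothing depends on this, since the logarithmic colouring already suffices.

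Your suggested ``cleaner route'' through the columns condition is exactly the paper's proof.
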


\begin{proof}
Suppose that deleting at most two columns lowers
rank. There is a nonzero rational row vector $\mathbf u$ such
that $c_i=\mathbf u\mathbf a_i$ is nonzero in at most two
positions. Let $I_r$ be the first block of the fixed
columns-condition partition containing a nonzero $c_i$.
Applying $\mathbf u$ to the columns condition gives
$\sum_{i\in I_r}c_i=0$, since all earlier $c_i$ vanish.
The support therefore consists of two positions $i,j$ in
$I_r$, with $c_i=-c_j\ne0$. Every vector in $\mathcal V$ satisfies
$x_i=x_j$, contrary to irredundancy.

Rank preservation implies $k-\ell\ge2$. For $|W|\ge2$, adding
back $|W|-2$ columns to $A_{\overline W}$ gives a matrix
of rank $\ell$, so
$\rank(A_{\overline W})\ge\ell-|W|+2$. The denominator
$|W|-1+\rank(A_{\overline W})-\ell$ defining $m(A)$ is
therefore positive. Taking $W=[k]$ gives
\[
 m(A)\ge\frac{k-1}{k-\ell-1}>1,
\]
where the strict inequality uses $\ell\ge1$.
\end{proof}

These rank estimates give the following projection and fibre bounds.

\begin{lemma}\label{lem:fibres}
For every $W\subseteq[k]$,
\[
 r(W)=|W|-\ell+\rank(A_{\overline W}),\qquad
 r(W)\ge1+\frac{|W|-1}{m(A)}\quad(|W|\ge2).
\]
For $W\subseteq J\subseteq[k]$, prescribing the coordinates in $W$
leaves at most $n^{r(J)-r(W)}$ vectors in
$\pi_J(\mathcal V)\cap[n]^J$.
\end{lemma}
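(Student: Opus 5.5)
The plan is to treat the identity for $r(W)$ as a statement about linear maps, derive the inequality directly from the definition of $m(A)$, and obtain the fibre bound from the fact that a rational affine subspace has few integer points in a box.

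\emph{The identity.} Consider the projection $\pi_W:\mathcal V\to\Q^W$. Its kernel consists of the vectors of $\mathcal V$ vanishing on $W$. These are exactly the vectors $\mathbf y\in\Q^{\overline W}$ with $A_{\overline W}\mathbf y=0$, extended by zero on $W$. Hence
\[
 \dim\ker\pi_W=(k-|W|)-\rank(A_{\overline W}).
\]
Since $\dim\mathcal V=k-\ell$, rank--nullity gives
\[
 r(W)=k-\ell-(k-|W|)+\rank(A_{\overline W})=|W|-\ell+\rank(A_{\overline W}).
\]
This also covers $W=\varnothing$, using the convention that the empty matrix has rank zero.

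\emph{The inequality.} Fix $|W|\ge2$ and write $d=|W|-1+\rank(A_{\overline W})-\ell$. By Lemma~\ref{lem:rank}, $d>0$. The definition~\eqref{eq:m(A)} gives $m(A)\ge(|W|-1)/d$, which rearranges to $d\ge(|W|-1)/m(A)$. By the identity, $r(W)=1+d$, and the claimed bound follows at once.

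\emph{The fibre bound.} This is the only step needing a small argument. Let $W\subseteq J$ and $P=\pi_J(\mathcal V)$, a subspace of $\Q^J$ of dimension $r(J)$. Restricting coordinates gives a linear map $\rho:P\to\Q^W$ with image $\pi_W(\mathcal V)$, of dimension $r(W)$. Its kernel $K$ therefore has dimension $r(J)-r(W)$. Any fibre of $\rho$ over a prescribed value is either empty or a translate $\mathbf v_0+K$. Choose $T\subseteq J$ with $|T|=\dim K$ such that $\pi_T$ is injective on $K$, for instance the pivot coordinates of a basis of $K$. Then two points of $\mathbf v_0+K$ that agree on $T$ coincide. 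So the points of the fibre lying in $[n]^J$ are determined by their $T$-coordinates, and there are at most $n^{|T|}=n^{r(J)-r(W)}$ of them.

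Nothing here is a genuine obstacle. The main point of care is the fibre count: it must be uniform over the prescribed values, which is why I would use the injective coordinate projection on $K$ rather than a volume estimate.
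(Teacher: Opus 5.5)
Your proof is correct and follows the same argument as the paper. You use rank--nullity for $\pi_W|_{\mathcal V}$ with kernel $\ker A_{\overline W}$, note that the $m(A)$ denominator equals $r(W)-1$, and get the fibre bound from an injective coordinate projection on the direction space of a nonempty fibre. One minor slip: the empty-matrix convention is needed for $W=[k]$, not for $W=\varnothing$, where $A_{\overline W}=A$.
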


\begin{proof}
The kernel of $\pi_W|_{\mathcal V}$ is naturally
identified with $\ker A_{\overline W}$, so rank--nullity gives
the formula for $r(W)$. For $|W|\ge2$, the denominator
in the definition of $m(A)$ is $r(W)-1$, and the lower bound
on $r(W)$ follows.

An affine space of dimension $t$ has at most $n^t$ points
in $[n]^r$: choose $t$ coordinate functions that are independent
on its direction space, making projection to these coordinates
injective. A nonempty fibre of $\pi_W$ on $\pi_J(\mathcal V)$ has
dimension $r(J)-r(W)$, which proves the stated fibre bound.
\end{proof}

In particular, Lemma~\ref{lem:rank} gives $r(\{i\})=1$ and
$r(\{i,j\})=2$ for $i\ne j$. Thus fixing two coordinate positions
leaves at most $n^{k-\ell-2}$ solutions. We record this together
with the counts and degree bounds for the solution hypergraph $H_n$.

\begin{lemma}\label{lem:degrees}
There are $b_A,B_A,K_A>0$,
depending only on $A$, such that, for all sufficiently large $n$,
\[
 \begin{gathered}
 b_An^{k-\ell}\le e(H_n)\le B_An^{k-\ell},\\
 \Delta_j(H_n)\le K_An^{k-\ell-1-(j-1)/m(A)}\quad(1\le j\le k).
 \end{gathered}
\]
There are also $O_A(n^{k-\ell-1})$ ordered solutions in $[n]^k$
with two equal coordinates.
\end{lemma}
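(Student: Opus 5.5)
The plan is to reduce every estimate to Lemma~\ref{lem:fibres} with $J=[k]$, together with a lower bound obtained from a single proper positive solution. For the upper bound on $e(H_n)$, take $W=\varnothing$ and $J=[k]$. Since $r([k])=\dim\mathcal V=k-\ell$, at most $n^{k-\ell}$ vectors of $\mathcal V$ lie in $[n]^k$, so we may take $B_A=1$.

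For the degree bounds, fix $j$ distinct vertices. An edge containing them places them in some $j$ coordinate positions, and there are at most $k!$ ways to choose which positions and in which order. For each such choice, with $W$ the set of chosen positions, Lemma~\ref{lem:fibres} leaves at most $n^{k-\ell-r(W)}$ completions. When $j=1$ we have $r(W)=1$. When $j\ge2$, Lemma~\ref{lem:fibres} gives $r(W)\ge1+(j-1)/m(A)$. In both cases
\[
 k-\ell-r(W)\le k-\ell-1-\frac{j-1}{m(A)},
\]
so $K_A=k!$ suffices.

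For solutions with two equal coordinates, fix positions $i\ne j$ with $x_i=x_j$. Choose the common value in at most $n$ ways. Prescribing both coordinates then leaves at most $n^{k-\ell-r(\{i,j\})}=n^{k-\ell-2}$ completions, since $r(\{i,j\})=2$ by Lemma~\ref{lem:rank}. Summing over the $\binom k2$ pairs gives $O_A(n^{k-\ell-1})$ such solutions.

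The lower bound $e(H_n)\ge b_An^{k-\ell}$ is the main step. The approach is to fix a proper positive integer solution $\mathbf y$, which exists by irredundancy, and choose a basis $\mathbf v_1,\ldots,\mathbf v_{k-\ell}$ of the integer lattice $\mathcal V\cap\Z^k$. Consider the vectors
\[
 \mathbf x=N\mathbf y+\sum_t \lambda_t\mathbf v_t,\qquad |\lambda_t|\le\varepsilon N,\ \lambda_t\in\Z,
\]
where $N=\lfloor n/(2\|\mathbf y\|_\infty)\rfloor$ and $\varepsilon>0$ is small depending on $A$. These vectors are distinct integer points of $\mathcal V$. For $\varepsilon$ small, their coordinates lie in $[n]$ and stay within a constant fraction of $N\min_i y_i$ of $N\mathbf y$. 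Because the coordinates of $\mathbf y$ are distinct with gaps at least $1$, a small enough $\varepsilon$ keeps the coordinates of each such $\mathbf x$ distinct. The number of choices of $(\lambda_t)$ is $\Omega((\varepsilon N)^{k-\ell})=\Omega_A(n^{k-\ell})$, and each choice gives a distinct ordered proper solution, so this proves the lower bound.

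Two checks need care. First, $\varepsilon$ must be chosen against $\max_t\|\mathbf v_t\|_\infty$ so that both positivity and properness survive the perturbation. Second, the $\mathbf v_t$ must be linearly independent, which makes the map $(\lambda_t)\mapsto\mathbf x$ injective. Everything else in the proof is a direct application of Lemma~\ref{lem:fibres}.
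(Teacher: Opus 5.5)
Your proposal is correct and follows essentially the paper's proof: the edge upper bound and degree bounds come from Lemma~\ref{lem:fibres} with at most $k!$ position assignments, and the lower bound perturbs a proper positive point of $\mathcal V$. Your integer family $N\mathbf y+\sum_t\lambda_t\mathbf v_t$ is a discrete version of the paper's open box $nT$ mapped by an integer basis matrix, and your collision count (fix the common value, then use $r(\{i,j\})=2$) is a cosmetic variant of the paper's observation that each diagonal $x_i=x_j$ cuts $\mathcal V$ down to dimension $k-\ell-1$; both give $O_A(n^{k-\ell-1})$.
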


\begin{proof}
For $j\ge2$, Lemma~\ref{lem:fibres} gives
at most $n^{k-\ell-1-(j-1)/m(A)}$ completions of $j$ values prescribed
in fixed coordinate positions. For $j=1$, it gives at most
$n^{k-\ell-1}$ completions. There are at most $k!$ ways to assign
a prescribed set of values to coordinate positions, so the
degree bound follows.

The affine-space count also gives at most $n^{k-\ell}$ solutions
in total. For the lower bound, choose an integer $k\times(k-\ell)$
matrix $B$ whose columns form a rational basis of $\mathcal V$.
Irredundancy gives a proper positive point of $\mathcal V\cap(0,1)^k$.
There is an open box $T\subseteq\R^{k-\ell}$ such that
$BT\subseteq(0,1)^k$ and every point of $BT$ is proper.
The box $nT$ contains $\Omega_A(n^{k-\ell})$ integer vectors;
their images under $B$ are distinct proper integer solutions
in $[n]^k$. This proves the edge-count lower bound.

Finally, each equation $x_i=x_j$ lowers the kernel dimension
by one, since irredundancy rules out that equality on all of
$\mathcal V$. There are at most $n^{k-\ell-1}$ solutions on each such
diagonal. Summing over the $\binom{k}{2}$ pairs proves the
nonproper bound.
\end{proof}

We use the interval form of Kr\'al', Serra and Vena's arithmetic
removal theorem~\cite[Theorem~1]{KSV}, which counts all ordered
solutions.

\begin{lemma}\label{lem:removal}
For every $\eta>0$, there is $\rho>0$, depending
only on $A,\eta$, such that the following holds for all
sufficiently large $n$. If $X_1,\ldots,X_k\subseteq[n]$
contain at most $\rho n^{k-\ell}$ ordered solutions of $A\mathbf x=0$
in $X_1\times\cdots\times X_k$, one can delete at most
$\eta n$ elements from each $X_i$ so that no solution remains
in the resulting coordinate product.
\end{lemma}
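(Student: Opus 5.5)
This is a direct transfer of Kr\'al', Serra and Vena's removal theorem~\cite[Theorem~1]{KSV} from cyclic groups to the interval $[n]$, by embedding $[n]$ into $\Z_N$ with $N$ a fixed multiple of $n$. That theorem is usually stated for $A\mathbf x=b$ over a finite abelian group, or a field $\mathbb F_q$, with $A$ of full rank $\ell$. I would take $N$ to be a prime with $N=\Theta(n)$, say $Ln<N\le2Ln$ by Bertrand's postulate, where $L$ exceeds $\sum_{i}|a_{ji}|$ for every row $j$. Then any $\mathbf x\in[n]^k$ has $|(A\mathbf x)_j|<N$. Consequently $A\mathbf x\equiv0\pmod N$ forces $A\mathbf x=0$ over $\Z$. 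So the solutions of the system in $X_1\times\cdots\times X_k$, read inside $\mathbb F_N$ (identifying $[n]$ with residues), are exactly the integer solutions.

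For $N$ larger than the absolute values of all $\ell\times\ell$ minors of $A$, the reduction of $A$ mod $N$ still has rank $\ell$. The number of solutions in $\mathbb F_N^k$ is then $N^{k-\ell}$. The hypothesis ``at most $\rho n^{k-\ell}$ solutions'' becomes at most $\rho L'N^{k-\ell}$ solutions, with $L'$ depending only on $A$.

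Apply the removal theorem in $\mathbb F_N$ with parameter $\eta'=\eta/(2L)$. It supplies $\rho'$ such that, if the number of solutions is at most $\rho'N^{k-\ell}$, one can delete at most $\eta'N\le\eta n$ elements from each $X_i$ and leave no solution mod $N$. By the first step, no integer solution remains either. Choosing $\rho=\rho'/L'$, with $\rho$ depending only on $A$ and $\eta$ since $L$ depends only on $A$, gives the lemma for all large $n$.

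The only point needing care is matching the exact hypotheses of~\cite[Theorem~1]{KSV}: the rank condition modulo $N$, and the fact that it counts all solutions rather than proper ones. Both are handled by choosing $N$ prime and larger than the minors, and by noting that the lemma itself counts all ordered solutions, so no properness adjustment is needed.
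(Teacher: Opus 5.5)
Your proposal is correct and matches the paper's proof. Both use a prime modulus with $Ln<Q<2Ln$, so that integer and modular solutions coincide; both make the prime exceed a nonzero maximal minor, which meets the hypothesis of the group removal theorem; and both use deletion parameter $\eta/(2L)$, which gives at most $\eta n$ deletions. Your constant $L'$ is unnecessary: $n<N$ already gives $\rho n^{k-\ell}\le\rho N^{k-\ell}$, which is why the paper takes $\rho=\rho_0$ directly.
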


\begin{proof}
Fix an integer $L>\max_i\sum_j|a_{ij}|$ and, for large $n$,
choose a prime $Q$ with $Ln<Q<2Ln$. The prime eventually
exceeds the absolute value of a fixed nonzero maximal minor
of $A$. It is therefore coprime to the greatest common divisor
of the nonzero maximal minors, as required by the finite-group
removal theorem.

Regard the $X_i$ as subsets of $\Z/Q\Z$. For
$\mathbf x\in[n]^k$, every row of $A\mathbf x$ has absolute
value less than $Ln<Q$, so the integer and group solutions
coincide. Apply the group theorem with deletion parameter
$\eta/(2L)$, and let $\rho_0$ be its solution-count parameter.
Take $\rho=\rho_0$. The number of solutions is at most
$\rho_0n^{k-\ell}\le\rho_0Q^{k-\ell}$, so the theorem permits deletion
of at most $\eta Q/(2L)<\eta n$ elements from each $X_i$.
The remaining product has no group solution and hence no
integer solution.
\end{proof}

We can now prove Lemma~\ref{lem:densecount}.

\begin{proof}[Proof of Lemma~\ref{lem:densecount}]
\claimsof{lem:densecount}
Let $P_1,\ldots,P_h$ be the protected classes,
allowing empty classes, and let $L$ be the union of the light classes.
Give each vertex of $L$ a distinct fresh colour, different from the
protected labels, and write $\widehat\chi$ for this colouring.

A coordinate type is a map $\theta:[k]\to[h]\cup\{*\}$. Its
equality partition identifies positions with the same protected
label and treats each starred position as a separate singleton.
Keep the types whose partitions are one of the monochromatic,
rainbow or prescribed block patterns. For each retained type set
\[
 X_i^\theta=
 \begin{cases}
 P_{\theta(i)},&\theta(i)\in[h],\\
 L,&\theta(i)=*.
 \end{cases}
\]
In a proper solution, starred positions use distinct vertices of
$L$, so their fresh colours differ from one another and from the
protected colours. Its equality partition is therefore exactly
the partition of $\theta$. Every proper solution of a retained
type is canonical for $\widehat\chi$, and every proper canonical
solution has exactly one retained type. There are at most
$T=(h+1)^k$ such types.

Apply Lemma~\ref{lem:removal} with $\eta_0=\zeta/(2kT)$, let
$\rho>0$ be its output, and set $\lambda_h=\rho/2$. We first count
solutions before restoring the light colours.

\begin{claim}\label{cl:dense-fresh}
The colouring $\widehat\chi$ has at least
$\lambda_hn^{k-\ell}$ ordered proper canonical solutions.
\end{claim}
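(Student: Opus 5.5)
We argue by contradiction via Lemma~\ref{lem:removal}. Suppose $\widehat\chi$ has fewer than $\lambda_hn^{k-\ell}$ ordered proper canonical solutions. For each retained type $\theta$, the product $X_1^\theta\times\cdots\times X_k^\theta$ contains all solutions of type $\theta$, proper or not. By the discussion above, each proper solution of $A\mathbf x=0$ in this product is a proper canonical solution of type $\theta$. By Lemma~\ref{lem:degrees}, the non-proper solutions number $O_A(n^{k-\ell-1})\le(\rho/2)n^{k-\ell}$ for large $n$. Hence every product contains at most $\lambda_hn^{k-\ell}+(\rho/2)n^{k-\ell}=\rho n^{k-\ell}$ ordered solutions.

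Apply Lemma~\ref{lem:removal} to each retained type $\theta$ separately. This yields deletion sets $D_i^\theta\subseteq X_i^\theta$ of size at most $\eta_0n$ such that $\prod_i(X_i^\theta\setminus D_i^\theta)$ contains no solution. Let $D=\bigcup_\theta\bigcup_iD_i^\theta$, so that
\[
|D|\le kT\eta_0n=\zeta n/2.
\]
Put $S'=S\setminus D$. Then
\[
|S'|\ge(1-\sigma+\zeta/2)n\ge(1-\sigma)n.
\]
Since $\sigma$ is a canonical density gap, for large $n$ the colouring $\widehat\chi$ restricted to $S'$ has a proper canonical solution $\mathbf x\in(S')^k$.

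This solution has exactly one retained type $\theta$. Define $\theta(i)$ to be the protected label of $x_i$ if $x_i\in P_j$, and $*$ if $x_i\in L$. The equality partition of $\mathbf x$ under $\widehat\chi$ equals the partition of $\theta$, because fresh colours are distinct and the solution is proper. Hence $x_i\in X_i^\theta$ for each $i$. Since $x_i\notin D\supseteq D_i^\theta$, the solution $\mathbf x$ lies in $\prod_i(X_i^\theta\setminus D_i^\theta)$, which contradicts the choice of the deletion sets. This proves the claim.

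The only delicate point is matching the counts. Lemma~\ref{lem:removal} counts all ordered solutions in the product, including non-proper ones, while the claim concerns proper canonical solutions. This gap is absorbed by the $O(n^{k-\ell-1})$ bound on non-proper solutions from Lemma~\ref{lem:degrees}, which is why $\lambda_h$ is set to $\rho/2$ rather than $\rho$. One must also check that a single deletion set serves all types at once. This works because the total deletion $kT\eta_0n$ is at most $\zeta n/2$, which stays within the density margin $\zeta$.
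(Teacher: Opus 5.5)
Your proposal is correct and follows essentially the same argument as the paper: apply Lemma~\ref{lem:removal} to each retained coordinate product, absorb the $O_A(n^{k-\ell-1})$ nonproper solutions by taking $\lambda_h=\rho/2$, delete the union of at most $kT\eta_0n=\zeta n/2$ vertices, and contradict the canonical density gap. The only difference is that you spell out why the surviving set is canonical-free: a canonical solution there would have a retained type and so would lie in a cleaned product. The paper leaves this step implicit.
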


\begin{poc}
Suppose that $\widehat\chi$ has fewer than $\lambda_hn^{k-\ell}$ proper
canonical solutions. Each retained coordinate product then has fewer than
this many proper solutions. Lemma~\ref{lem:degrees} bounds its
nonproper solutions by $O_A(n^{k-\ell-1})$, so for large $n$ its total
number of solutions is at most $\rho n^{k-\ell}$. Removal destroys all
solutions in each product by deleting at most $\eta_0n$ vertices
from each coordinate set. Delete from $S$ the union of these
deletion sets. At most $kT\eta_0n=\zeta n/2$ vertices are lost,
leaving a canonical-free set of size at least
$(1-\sigma+\zeta/2)n$. This contradicts the definition of $\sigma$.
\end{poc}

Now set
\[
 a_h=\min\left\{\frac14,
          \frac{\lambda_h}{2\binom{k}{2}}\right\},
 \qquad \beta_h=\frac{\lambda_h}{2}.
\]
We now bound the loss when the light colours are restored.

\begin{claim}\label{cl:dense-collisions}
At most $\binom{k}{2}a_hn^{k-\ell}$ ordered proper solutions
have two coordinates in one original light class.
\end{claim}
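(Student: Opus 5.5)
We bound the number of such solutions by a union over the $\binom{k}{2}$ position pairs $\{r,s\}\subseteq[k]$ and over the light classes. Fix a pair $r\ne s$ and let $L_{h+1},\ldots,L_R$ denote the original light classes. These are the classes of the light labels before the fresh recolouring, each of size at most $a_hn$. A proper solution with $x_r,x_s$ in one light class $L_i$ is determined by the ordered pair $(x_r,x_s)\in L_i^2$ together with a completion of the remaining coordinates.

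By Lemma~\ref{lem:fibres}, applied with $W=\{r,s\}$ and $J=[k]$, and using $r(\{r,s\})=2$ from Lemma~\ref{lem:rank} together with $r([k])=k-\ell$, prescribing $x_r$ and $x_s$ leaves at most $n^{k-\ell-2}$ solutions in $[n]^k$. Hence the count for the pair $\{r,s\}$ is at most
\[
 \sum_{i>h}|L_i|^2\,n^{k-\ell-2}
 \le \Bigl(\max_{i>h}|L_i|\Bigr)\Bigl(\sum_{i>h}|L_i|\Bigr)n^{k-\ell-2}
 \le a_hn\cdot n\cdot n^{k-\ell-2}=a_hn^{k-\ell},
\]
since the light classes are disjoint subsets of $[n]$, so their sizes sum to at most $n$.

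Summing over the $\binom{k}{2}$ pairs gives the claimed bound $\binom{k}{2}a_hn^{k-\ell}$.

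The only point needing care is the use of $\sum|L_i|^2\le\max|L_i|\cdot\sum|L_i|$. A naive bound, multiplying the number of classes by $(a_hn)^2$, would fail, because the number of light labels is not bounded in terms of $h$. It is the disjointness of the classes that gives the linear total.
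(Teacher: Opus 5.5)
Your proposal is correct and follows the paper's proof essentially verbatim. For each of the $\binom{k}{2}$ position pairs, you bound the choices of values in a common light class by $\sum_C|C|^2\le a_hn\sum_C|C|\le a_hn^2$, then multiply by the at most $n^{k-\ell-2}$ completions given by Lemma~\ref{lem:fibres}.
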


\begin{poc}
Restoring the original light colours changes a counted equality partition
only if two coordinates lie in the same original light class.
For any pair of coordinate positions, the number of choices for
their values in a common light class is at most
\[
 \sum_{\text{light }C}|C|^2
 \le a_hn\sum_{\text{light }C}|C|\le a_hn^2.
\]
Each choice has at most $n^{k-\ell-2}$ completions by
Lemma~\ref{lem:fibres}. Hence at most $\binom{k}{2}a_hn^{k-\ell}$
solutions have such a collision.
\end{poc}

The choice of $a_h$ leaves
at least $\beta_hn^{k-\ell}$ solutions with the same equality
partition as in $\widehat\chi$ and no repeated light label.
\end{proof}

\subsection{Containers and the final count}\label{subsec:containers}

Colour reduction fixes the palette size once the light-class
bounds are specified. Supersaturation supplies a bound for each
number of protected labels. We now transfer its admissible count
to the random set for a fixed palette; together with
Lemma~\ref{lem:reduction}, this will prove
Proposition~\ref{prop:counting}.

\begin{lemma}\label{lem:palette-count}
For every $0<\varepsilon\le\sigma$ and integer $h\ge1$, there is
$a_h\in(0,1/4]$, depending only on $A,\cI,\sigma,\varepsilon,h$,
such that for every integer $R\ge h$ there are $C,c,\gamma>0$
with the following property. For all sufficiently large $n$ and
$Cn^{-1/m(A)}\le p\le1$, with probability at least $1-e^{-cnp}$,
every colouring $\varphi:V\to[R]$ of $V\subseteq[n]_p$ satisfying
\[
 |V|\ge(1-\sigma+3\varepsilon/4)|[n]_p|,\qquad
 |\varphi^{-1}(i)|\le\frac{a_h}{4}|[n]_p|\quad(h<i\le R),
\]
has at least $\gamma p^k n^{k-\ell}$ ordered admissible solutions.
\end{lemma}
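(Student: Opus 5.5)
The plan is to run the container argument sketched in Section~\ref{subsec:one-notation} on the multihypergraph $G_n=G_n(A,\cI;h,R)$.

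\textbf{Choice of $a_h$ and the dense input.} Apply Lemma~\ref{lem:densecount} with $\zeta=\varepsilon/4$. This gives $a_h$ and $\beta_h$, and $a_h$ is chosen before $R$. The dense statement we use against containers is then the following. Every colouring $\psi$ of some $S\subseteq[n]$ with $|S|\ge(1-\sigma+\varepsilon/4)n$, having $h$ protected labels and light classes of size at most $a_hn$, satisfies
\[
e(G_n[X_\psi])\ge\beta_hn^{k-\ell}.
\]

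\textbf{Degrees and containers.} Next, Lemma~\ref{lem:degrees} is transferred to $G_n$. We have $v(G_n)=Rn$ and $e(G_n)=\Theta_{A,R}(n^{k-\ell})$, since every rainbow label list is admissible. For $j\ge2$, a set of $j$ labelled vertices lies in at most $R^k\Delta_j(H_n)$ edges. Vertex pairs with equal integers lie in no edge. Setting $\tau=Kp$, the standard codegree condition becomes
\[
\Delta_j(G_n)\lesssim \tau^{j-1}e(G_n)/v(G_n)
\]
for all $j$. This holds exactly when $p\ge Cn^{-1/m(A)}$, by the exponent $k-\ell-1-(j-1)/m(A)$ in Lemma~\ref{lem:degrees}. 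The container theorem of Balogh--Morris--Samotij and Saxton--Thomason, in its version for sets spanning few edges, is applied with a small parameter $\delta'$. It yields a fingerprint map $T\mapsto\mathcal C(T)$ with $|T|\le c_1\tau Rn=O(pn)$. Every $X\subseteq V(G_n)$ with $e(G_n[X])\le\delta' e(G_n)\tau^k$, which is of order $p^kn^{k-\ell}$, has some $T\subseteq X$ with $X\subseteq\mathcal C(T)$ and $e(G_n[\mathcal C(T)])\le\delta'' n^{k-\ell}$, where $\delta''<\beta_h$. The paper suggests sampling $M=\Theta(np^{1-k})$ edges instead, to reduce to independent sets. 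Using the supersaturated container version directly is equivalent, and I would pick whichever has cleaner references.

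\textbf{Contradiction via spreading.} Suppose a colouring $\varphi$ of $V\subseteq[n]_p$ as in the statement has fewer than $\gamma p^kn^{k-\ell}$ admissible solutions, with $\gamma$ small. Then $X_\varphi$ has few edges, so it is supported by a container $\mathcal C=\mathcal C(T)$ with $T\subseteq X_\varphi$. Partition $[n]$ into the at most $2^R$ groups $B_D=B_D(\mathcal C)$.

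Consider first the typical case, where $|B_D\cap[n]_p|\le p|B_D|+\xi pn$ for every $D$. Within each $B_D$, the colouring $\varphi$ assigns labels in $D$ to $|V\cap B_D|$ points. Build $\psi$ on $S\subseteq[n]$ by choosing in each $B_D$ about $|\varphi^{-1}(i)\cap B_D|/p$ points for each label $i\in D$, rounding down. This is possible because $\sum_i|\varphi^{-1}(i)\cap B_D|/p\le|B_D|+\xi n$; trim $\xi n$ per group.

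Two properties of $\psi$ follow. First, $|S|\ge |V|/p-2^R\xi n$. Second, each light class of $\psi$ has size at most $\frac{a_h}{4}|[n]_p|/p\le a_hn/2$. We also have $|[n]_p|\ge(1-\xi)pn$ by Chernoff, so $|S|\ge(1-\sigma+\varepsilon/4)n$ once $\xi\ll\varepsilon 2^{-R}$. Moreover $X_\psi\subseteq\mathcal C$. So $\psi$ has $\beta_hn^{k-\ell}$ edges inside $\mathcal C$, contradicting the container bound.

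Hence some group satisfies $|B_D\cap[n]_p|>p|B_D|+\xi pn$. The key point is that this deviation must occur on $[n]_p\setminus T$. Condition on $T\subseteq[n]_p$, where the integers of $T$ number $O(pn)$ and we take $c_1$ small relative to $\xi$. Then
\[
\Pr\bigl(T\subseteq[n]_p \text{ and the deviation occurs}\bigr)\le p^{|T|}e^{-c_2\xi^2pn}.
\]
Summing over fingerprints $T$ of size $t\le c_1 Rpn$ gives
\[
\sum_t\binom{Rn}{t}p^t e^{-c_2\xi^2pn}\le\exp\bigl(O(c_1\log(1/c_1))pn-c_2\xi^2pn\bigr)\le e^{-cnp},
\]
provided $c_1$ is tiny relative to $\xi$. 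Adding the Chernoff failure probability for $|[n]_p|$ completes the bound.

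\textbf{Main obstacle.} The delicate point is the quantifier order. The fingerprint size, set by $\tau=Kp$ with $K$ and hence $C$ large, must be small enough that the union bound $\binom{Rn}{t}p^t$ loses only $e^{o(\xi^2)pn}$. The container sparsity must still be below $\beta_h$. Larger $K$ shrinks $c_1$ relative to $\tau/p$ only through the degree constants, so I expect to need the container lemma with the fingerprint bound $c(\delta'')\tau v(G)$ and choose $K=K(\xi,\delta'')$. Checking that this is consistent, with $\xi$ depending on $\varepsilon,R$ and then $C$ and $\gamma$ chosen last, is where care is needed.
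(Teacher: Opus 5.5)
Your proposal is correct and follows the paper's proof everywhere except at the container step. The shared parts are:
\begin{itemize}
\item taking $a_h,\beta_h$ from Lemma~\ref{lem:densecount};
\item reading the container through the label-choice groups $B_D$;
\item spreading $\varphi$ group by group to a supported colouring of a dense subset of $[n]$, which is the paper's Lemma~\ref{lem:lists};
\item deriving a forced excess in some $B_D$ by contradiction with the container's sparsity;
\item conditioning on the fingerprint and taking a union bound over compatible fingerprints.
\end{itemize}

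The difference is how sets with few edges are placed in containers. The paper samples $M=\Theta(np^{1-k})$ edge copies of $G_n$ and deletes edges to control codegrees (Lemma~\ref{lem:palette-sampling}). It then applies the Balogh--Morris--Samotij theorem for independent sets to the sample $H$. It closes with a probability comparison: a colouring with fewer than $\gamma p^kn^{k-\ell}$ admissible solutions is missed by a good sample with probability at least $\tfrac12e^{-A_0\gamma np}$, which gives $\tfrac12e^{-A_0\gamma np}\Pr(\mathcal B)\le e^{-c_*np}$.

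You instead invoke a container theorem for all sets $X$ with $e(G_n[X])\le\delta'\tau^ke(G_n)$. Then every bad colouring lies deterministically in a container, and the final comparison disappears. The cost is relying on that sparse-set version, stated with fingerprints $T\subseteq X$ of size $O(\tau v(G_n))$. Multiplicities in $G_n$ are at most $k!$, so the multihypergraph issue is harmless. The standard derivation of such a theorem is exactly the sampling trick, so the paper's route is self-contained given only the independent-set theorem.

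Your quantifier discussion has the direction reversed. With $\tau=Kp$ and $K$ large, fingerprints of size $O(\tau Rn)$ get larger, not smaller. Take $\tau=\alpha p$ with $\alpha$ small, as the paper does. For $p\ge n^{-1/m(A)}/\alpha$, the degree bounds give $\Delta_j(G_n)\le K_0\tau^{j-1}e(G_n)/v(G_n)$ with $K_0=R^{k+1}K_A/b_A$, which does not depend on $\alpha$. Hence the container constant is independent of $\alpha$, and fingerprints have size at most $\lambda pn$ with $\lambda=O(\alpha R)$. Choose $\alpha$ so that $\lambda\le\xi/2$ and $\lambda\log(eR/\lambda)$ is small compared with $\xi^2$. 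Then take $C$ of order $1/\alpha$, and choose $\gamma$ last, of order $\delta'\alpha^k$.

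Two small slips:
\begin{itemize}
\item The lower bound $e(G_n)=\Omega(n^{k-\ell})$ should come from monochromatic lists in the protected label $1$, since rainbow lists do not exist when $R<k$.
\item The factor $p^{|T|}$ uses that fingerprints are compatible, with one label per integer. This holds because $T\subseteq X_\varphi$.
\end{itemize}
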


We prove Lemma~\ref{lem:palette-count} in four steps, using the
following theorem of Balogh, Morris and
Samotij~\cite[Theorem~2.2]{BMS}. It permits repeated edges;
all edge counts and degrees include multiplicities.

\begin{theorem}[Balogh--Morris--Samotij]\label{thm:containers}
Fix $k\ge2$, $K>0$ and $0<\varepsilon_0<1$. There is $B>0$
with the following property. Let $H$ be a $k$-uniform
multihypergraph, and let $\mathcal F$ be an increasing family of
subsets of $V(H)$ such that every $U\in\mathcal F$ satisfies
\[
 |U|>\varepsilon_0 v(H),\qquad
 e(H[U])>\varepsilon_0 e(H).
\]
If $0<\tau<1$ and
\[
 \Delta_j(H)\le K\tau^{j-1}\frac{e(H)}{v(H)}
 \qquad(1\le j\le k),
\]
then there is a family $\mathcal S$ of subsets of $V(H)$,
each of size at most $B\tau v(H)$, and a map
$f:\mathcal S\to\mathcal P(V(H))\setminus\mathcal F$ such that every
independent set $I$ in $H$ has some $S\in\mathcal S$ with
$S\subseteq I$ and $I\setminus S\subseteq f(S)$.
\end{theorem}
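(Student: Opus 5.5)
This theorem is quoted from~\cite[Theorem~2.2]{BMS} and we use it as a black box. If we had to reprove it, we would follow the Balogh--Morris--Samotij scheme: an algorithm that reads an independent set $I$ only through the vertices it records. Two points matter for the present application. First, multiplicities cause no difficulty, because every step of the argument only adds or compares weighted edge counts and weighted degrees. Second, the output has to be a pair $(S,f(S))$ in which $f(S)$ depends on $S$ alone.

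The plan is induction on the uniformity, using the following one-step lemma. Let $H_j$ be a $j$-uniform multihypergraph with
\[
 \Delta_i(H_j)\le K_j\tau^{i-1}e(H_j)/v(H)\qquad(1\le i\le j).
\]
Given an independent set $I$, run a greedy procedure.
\begin{enumerate}
\item Order the current vertices by maximum degree in the current hypergraph.
\item Repeatedly take the top vertex $v$. If $v\in I$, put $v$ into $S$. Add to a $(j-1)$-uniform multihypergraph $H_{j-1}$ the links of edges through $v$, but skip any link containing a set whose degree in $H_{j-1}$ is already saturated, meaning it has reached $K_{j-1}\tau^{i-1}e(H_j)/v(H)$ for its size $i$. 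If $v\notin I$, delete $v$.
\item Stop when $|S|$ reaches $b\tau v(H)$, or when the current maximum degree falls below a small multiple of $e(H_j)/v(H)$.
\end{enumerate}
The decision at each step depends only on $S$, so $S$ determines the outcome.

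\emph{First case.} The procedure stops because $S$ is large. Every vertex put into $S$ had high degree, and the saturation rule blocks at most a constant fraction of its links. Hence $H_{j-1}$ has at least about $\tau e(H_j)$ edges, while the saturation rule itself keeps the degree bounds for $H_{j-1}$. Because $I$ is independent, $I\setminus S$ is independent in $H_{j-1}$, so we recurse on $H_{j-1}$.

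\emph{Second case.} The procedure stops because the maximum degree is low. The surviving vertices then form a container $f(S)$ that spans few edges.

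After at most $k$ levels we reach $H_1$. There, $I$ avoids every edge of $H_1$, so $I$ lies outside a set of $\Omega(v(H))$ vertices. That set of vertices is the container constraint at the bottom level.

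The last step is to show that the final container $f(S)$ lies outside $\mathcal F$. Combining the levels, either $|f(S)|\le(1-c)v(H)$, or $f(S)$ spans at most $\varepsilon_0e(H)$ edges. A single application does not reach the threshold $\varepsilon_0$ directly, so we would iterate, at most a number of rounds depending on $\varepsilon_0$ and $K$. Each round shrinks either $v$ or the edge count by a constant factor, until the container violates one of the two defining conditions of $\mathcal F$. Since $\mathcal F$ is increasing, the container then lies outside $\mathcal F$. The fingerprints from the different rounds are concatenated, and their total size stays at most $B\tau v(H)$.

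We expect the main obstacle to be the saturation bookkeeping. It has to keep the codegree conditions of every $H_{j-1}$ comparable to the original hypothesis, with constants depending only on $k$, $K$ and $\varepsilon_0$. At the same time it must discard only a bounded fraction of the edges through each vertex placed in $S$. This is exactly where the hypothesis $\Delta_j(H)\le K\tau^{j-1}e(H)/v(H)$ is used, as in~\cite{BMS}.
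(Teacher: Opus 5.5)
Your proposal matches the paper, which likewise imports this result from~\cite[Theorem~2.2]{BMS} without proof, noting only that repeated edges are allowed and counted with multiplicity. Your optional reconstruction follows the Balogh--Morris--Samotij scheme, but two points would need adjusting if you wrote it out. First, since $e(H_{j-1})=\Theta(\tau e(H_j))$, the saturation threshold for $i$-sets in $H_{j-1}$ must be of order $K_{j-1}\tau^{i}e(H_j)/v(H)$ rather than $K_{j-1}\tau^{i-1}e(H_j)/v(H)$. Second, the per-vertex claim that saturation blocks only a bounded fraction of links comes from taking the max-degree order in the hypergraph from which edges containing saturated sets have already been removed: the global number of such edges is only a small constant times $e(H_j)$, which can far exceed $e(H_{j-1})$ when $\tau$ is small, so it cannot be charged directly against $H_{j-1}$.
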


The set $S$ is called the \emph{fingerprint}.

The four steps use constants in the following order. Fix
$\varepsilon$ and $h$, and take $a_h,\beta_h$ from
Lemma~\ref{lem:densecount} with $\zeta=\varepsilon/2$.
These constants are chosen before $R$. Fix $R\ge h$, regard
$[h]$ as protected, and write $G_n=G_n(A,\cI;h,R)$.

\medskip
\Needspace{5\baselineskip}
\noindent\textbf{Step I: Sampling and degree control.}
We first bound degrees in $G_n$, as needed to choose the edge sample.

\begin{lemma}\label{lem:palette-degrees}
There are $b_A,B_A,K_A>0$, depending only on $A$, such that for
all integers $R\ge h\ge1$ and all sufficiently large $n$,
the labelled solution hypergraph $G_n=G_n(A,\cI;h,R)$ satisfies
\begin{equation}\label{eq:labelled-degrees}
 \begin{gathered}
 b_An^{k-\ell}\le e(G_n)\le R^kB_An^{k-\ell},\\
 \Delta_j(G_n)\le R^kK_An^{k-\ell-1-(j-1)/m(A)}
 \qquad(1\le j\le k).
 \end{gathered}
\end{equation}
\end{lemma}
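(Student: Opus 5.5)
The plan is to reduce everything to the unlabelled solution hypergraph $H_n$ and Lemma~\ref{lem:degrees}. By construction, the edges of $G_n$ are indexed by pairs $(\mathbf x,\mathbf i)$, where $\mathbf x$ is an ordered proper solution in $[n]^k$ and $\mathbf i\in[R]^k$ is an admissible canonical label list. Forgetting labels gives a projection from edge copies of $G_n$ to edges of $H_n$. Each fibre has size at most $R^k$, which is the number of all label lists.

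\textbf{Upper edge bound.} Summing over edges of $H_n$ gives $e(G_n)\le R^k e(H_n)\le R^kB_An^{k-\ell}$.

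\textbf{Lower edge bound.} Every ordered proper solution has at least one admissible list. The monochromatic list $(1,\ldots,1)$ is canonical, and its only repeated label is $1\in[h]$, which is protected because $h\ge1$. Hence every edge of $H_n$ has a nonempty fibre, so $e(G_n)\ge e(H_n)\ge b_An^{k-\ell}$. The constants $b_A,B_A$ are those of Lemma~\ref{lem:degrees}; $K_A$ is handled below.

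\textbf{Degree bound.} Fix $j$ distinct vertices $(y_1,i_1),\ldots,(y_j,i_j)$ of $G_n$.
\begin{itemize}
\item If two of the $y_r$ coincide, no edge contains all $j$ vertices, because every edge uses $k$ distinct integers. The degree is then $0$.
\item Otherwise, any edge copy $(\mathbf x,\mathbf i)$ containing these vertices has $\{y_1,\ldots,y_j\}\subseteq\{x_1,\ldots,x_k\}$. So $\mathbf x$ is an edge of $H_n$ containing the $j$ distinct integers $y_1,\ldots,y_j$, and there are at most $\Delta_j(H_n)\le K_An^{k-\ell-1-(j-1)/m(A)}$ such $\mathbf x$.
\item For each such $\mathbf x$, at most $R^k$ label lists occur.
\end{itemize}
Multiplying gives $\Delta_j(G_n)\le R^kK_An^{k-\ell-1-(j-1)/m(A)}$ with the same $K_A$.

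The only point needing care is multiplicity: one labelled vertex set can arise from several pairs $(\mathbf x,\mathbf i)$, for instance from different orderings of one support. Counting pairs directly, rather than underlying sets, makes the bounds include multiplicities automatically, which is the convention of Theorem~\ref{thm:containers}. The constants depend only on $A$, since the $R$-dependence has been pulled out as the explicit factor $R^k$. "Sufficiently large $n$" is inherited from Lemma~\ref{lem:degrees}, independently of $R$ and $h$.
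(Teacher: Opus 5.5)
Your proposal is correct and follows the paper's proof essentially step for step. You get the lower edge bound from the monochromatic list in the protected label $1$. You get both upper bounds from the fact that each ordered solution of $H_n$ carries at most $R^k$ label lists. For the degree bound, you observe that a set containing two labelled copies of one integer lies in no edge, and otherwise you apply the bound on $\Delta_j(H_n)$ from Lemma~\ref{lem:degrees}. Your remark that counting pairs $(\mathbf x,\mathbf i)$ automatically includes multiplicities matches the convention the paper uses.
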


\begin{proof}
Since $h\ge1$, a monochromatic list in a protected label is
admissible. The edge-count bounds follow from
Lemma~\ref{lem:degrees}, with at most $R^k$ lists per ordered solution.
For the degree bound, a set containing two labelled copies of one integer
lies in no edge. Otherwise, its underlying integers lie in
at most $\Delta_j(H_n)$ ordered solutions, each with at most
$R^k$ label lists.
\end{proof}

Take $b_A,B_A,K_A$ from Lemma~\ref{lem:palette-degrees}.
Set $N=Rn$, $E=e(G_n)$ and
$\theta=\varepsilon/(16\cdot2^R)$, and let
\[
 K_0=\max\{1,R^{k+1}K_A/b_A\},\qquad
 K_1=R^kK_A,\qquad
 \xi=\min\{1/4,\beta_h/(4R^kB_A)\}.
\]
Here $\xi$ sets the threshold for sets rich in $G_n$, while
$\theta$ controls the permitted excess in each label-choice group.
Let $\mathcal F$ be the increasing family of all
$U\subseteq[n]\times[R]$ for which 
$$e(G_n[U])\ge\xi E.$$
By~\eqref{eq:labelled-degrees}, every $U\in\mathcal F$ has
$|U|\ge\xi N/K_0$. Set
$\varepsilon_0=\min\{\xi/4,\xi/(2K_0)\}$ and $L=8K_0$.
Let $B$ be the constant from Theorem~\ref{thm:containers}
with $K=2L$ and $\varepsilon_0$ as above.
Choose $0<\alpha\le1/2$ so small that, with $\lambda=B\alpha R$,
\begin{equation}\label{eq:alpha-choice}
 \lambda\le\theta/2,\qquad
 \lambda K_1\le\beta_h/4,\qquad
 \lambda\log\frac{eR}{\lambda}\le\frac{\theta^2}{48}.
\end{equation}
Take $C\ge1/\alpha$. For $Cn^{-1/m(A)}\le p\le1$, set
$\tau=\alpha p$.
Then~\eqref{eq:labelled-degrees} gives
\begin{equation}\label{eq:base-degrees}
 \Delta_j(G_n)\le K_0\tau^{j-1}\frac{E}{N}
 \qquad(1\le j\le k).
\end{equation}

Choose a constant $D_0\ge16\log 2/\xi$, sufficiently large for
the estimates below, and sample
$$M=\lceil D_0N\tau^{1-k}\rceil=\Theta(np^{1-k})$$
edge copies
independently and uniformly from $G_n$, with replacement.
Write $H_0$ for the resulting multihypergraph. The sample is
independent of $[n]_p$. The choice of $\alpha$ makes fingerprints
small enough for the later union bound, while $D_0$ makes the
sample reliable for all sets in $\mathcal F$.
Our first lemma removes the edges responsible for excessive
degrees while retaining every set rich in edges of $G_n$.

\begin{lemma}\label{lem:palette-sampling}
There is $d>0$ such that, with probability at least $1-e^{-dM}$,
one can delete at most $\xi M/8$ edges from $H_0$ and obtain
a multihypergraph $H$ satisfying
\[
 e(H[U])>\varepsilon_0e(H)\quad(U\in\mathcal F),
 \qquad
 \Delta_j(H)\le2L\tau^{j-1}\frac{e(H)}{N}
 \quad(1\le j\le k).
\]
\end{lemma}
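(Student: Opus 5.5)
We sample $M$ edges independently and uniformly from $G_n$. Two facts about the sample are needed: every rich set keeps many sampled edges, and the sampled degrees are not much larger than their expectations. For a fixed $U\in\mathcal F$, each sampled edge lies in $G_n[U]$ with probability at least $\xi$. So $e(H_0[U])$ is binomial with mean at least $\xi M$, and Chernoff gives $e(H_0[U])\ge\xi M/2$ except with probability $e^{-\xi M/8}$. There are at most $2^N$ sets $U$, and $N=Rn$ while $M\ge D_0N\tau^{1-k}\ge D_0N$. Since $D_0\ge16\log2/\xi$, we get $2^Ne^{-\xi M/8}\le e^{-\xi M/16}$. Hence, with probability $1-e^{-dM}$, every $U\in\mathcal F$ has $e(H_0[U])\ge\xi M/2$.

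Next, the degrees. For a $j$-set $T$, the expected sampled degree is
\[
 M\Delta_j(G_n)/E\le K_0\tau^{j-1}M/N
\]
by \eqref{eq:base-degrees}. Call $T$ \emph{bad} if its sampled degree exceeds $L\tau^{j-1}M/N$, where $L=8K_0$. Delete every sampled edge containing a bad set. It remains to bound the number of deleted edges by $\xi M/8$. For $j=1$, I would not use a union bound over vertices. Instead, sum the sampled degrees above the threshold. The total $\sum_{T}\deg_{H_0}(T)\,\mathbf 1[\deg>L\tau^{j-1}M/N]$ has expectation controlled by binomial upper tails. With threshold ratio $L/K_0=8$, the tail $\Pr(\mathrm{Bin}\ge t)\le (e\mu/t)^t$ makes the expected excess at most $2^{-\Omega(\cdot)}$ of $M$. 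The constant $D_0$ is what makes the thresholds large, since $L\tau^{j-1}M/N\ge LD_0\tau^{j-k}\ge LD_0$.

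For exponential concentration of the deleted-edge count, I would use a bounded-differences inequality. Changing one sampled edge changes the deleted count by at most a bounded multiple of the thresholds, so this is not immediately small. A cleaner route is to dominate the deleted edges by a sum of indicators with bounded dependence. Alternatively, one can bound the number of edges lying in sets of degree at least $2^iL\tau^{j-1}M/N$ dyadically, and use a union bound over choices of edge sets: each sub-collection of $m$ sampled edges sharing a common $j$-set has probability $(\Delta_j/E)^{m}\binom{M}{m}$. Either way, I expect this step to be the main obstacle. The target is that, with probability $1-e^{-dM}$, at most $\xi M/8$ edges are deleted, with $d$ depending on $D_0$, $\xi$ and $k$.

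Finally, let $H$ be $H_0$ with these edges removed. Then $e(H)\in[M-\xi M/8,M]$. Every $U\in\mathcal F$ has
\[
 e(H[U])\ge\xi M/2-\xi M/8>\varepsilon_0 M\ge\varepsilon_0e(H),
\]
since $\varepsilon_0\le\xi/4$. Every $j$-set is good, so
\[
 \Delta_j(H)\le L\tau^{j-1}M/N\le 2L\tau^{j-1}e(H)/N,
\]
using $e(H)\ge M/2$. This gives both conclusions.
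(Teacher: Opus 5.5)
Your Chernoff-plus-union-bound step for the sets in $\mathcal F$ and your final verification of the two conclusions are correct. So is your expectation estimate for the edges through over-degree sets. The proof has a genuine gap exactly where you flagged it: you never show that at most $\xi M/8$ edges are deleted with probability $1-e^{-dM}$. With your rule (delete every sampled edge through any $j$-set whose degree exceeds $L_j=L\tau^{j-1}M/N$), the natural tool fails. Replacing a single sampled edge can push one vertex from degree $\lfloor L_1\rfloor$ to $\lfloor L_1\rfloor+1$. That changes the deleted count by about $L_1=LM/N\asymp LD_0\tau^{1-k}$. The bounded differences inequality then gives only $\exp(-\Omega(M/L_1^2))=\exp(-\Omega(N\tau^{k-1}))=\exp(-\Omega(np^{k-1}))$. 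Because $m(A)\le k-1$, this is $e^{-O(1)}$ at $p=Cn^{-1/m(A)}$. That is useless both for the lemma and for its use in the proof of Lemma~\ref{lem:palette-count}, where the sampling failure probability must be below $\tfrac12e^{-A_0\gamma np}$. The dyadic union bound and the bounded-dependence domination you mention are not carried out, so the argument is incomplete.

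The missing idea is to control the overshoot of the degrees rather than the number of edges through bad sets. With $a_j=\lfloor L_j\rfloor$, set
\[
 Z=\sum_{j=1}^k\ \sum_{|T|=j}\bigl(\deg_{H_0}(T)-a_j\bigr)_+ .
\]
Since $(\deg_{H_0}(T)-a_j)_+\le\deg_{H_0}(T)\,\mathbf 1[\deg_{H_0}(T)>L_j]$, your expectation estimate already gives $\EE Z\le\xi M/16$ for large $D_0$. Unlike your count, $Z$ has bounded differences $2(2^k-1)$. Replacing one sampled edge alters the degrees of at most $2(2^k-1)$ sets, each by one, and $x\mapsto(x-a)_+$ is $1$-Lipschitz. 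McDiarmid then gives $\Pr(Z>\xi M/8)\le e^{-d_1M}$. Now delete greedily: while some $j$-set $T$ has degree above $a_j$, remove one edge containing $T$. Each removal lowers $Z$ by at least one and raises no term. Hence at most $Z\le\xi M/8$ edges are removed, and at the end $\Delta_j(H)\le a_j\le L_j$. Your closing computation then finishes the proof; this is precisely the paper's argument.
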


\begin{proof}
For any $U\in\mathcal F$, each sampled edge lies in $U$ with
probability at least $\xi$. Chernoff's inequality and a union
bound over at most $2^N$ sets give
\begin{equation}\label{eq:sampled-dense}
 e(H_0[U])\ge\xi M/2\qquad(U\in\mathcal F)
\end{equation}
except with probability at most
$2^Ne^{-\xi M/8}\le e^{-\xi M/16}$, since $M\ge D_0N$.

For $1\le j\le k$, set
$L_j=L\tau^{j-1}M/N$ and $a_j=\lfloor L_j\rfloor$, and
\[
 Z=\sum_{j=1}^k\ \sum_{\substack{T\subseteq[n]\times[R]\\|T|=j}}
       \bigl(\deg_{H_0}(T)-a_j\bigr)_+.
\]
Each $L_j\ge LD_0$. Conditional on one sampled edge containing
$T$, the number of other sampled edges containing $T$ is
binomial with mean at most $K_0\tau^{j-1}M/N=L_j/8$,
by~\eqref{eq:base-degrees}. For $D_0$ large enough, a binomial
tail bound makes the conditional probability that
$\deg_{H_0}(T)>a_j$ at most $e^{-L_j/8}$.
Counting incidences between sampled edges and their subsets,
\[
\EE Z\le (2^k-1)M e^{-LD_0/8}\le\xi M/16.
\]
Changing one sampled edge changes $Z$ by at most
$2(2^k-1)$. The bounded differences inequality therefore gives
$\Pr(Z>\xi M/8)\le e^{-d_1M}$ for a constant $d_1>0$.

Whenever a set $T$ has degree above its bound $a_j$, delete
an edge containing $T$. Each deletion reduces the current
potential by at least one, so at most the original $Z$ edges
are deleted. On the two events above,
the resulting $H$ has $e(H)\ge M/2$ and
\[
 e(H[U])\ge\xi M/2-\xi M/8
     >\varepsilon_0e(H)\quad(U\in\mathcal F).
\]
Its degrees are at most $a_j\le L_j\le
2L\tau^{j-1}e(H)/N$.
The two failure probabilities can be absorbed into $e^{-dM}$.
\end{proof}

\medskip
\Needspace{5\baselineskip}
\noindent\textbf{Step II: Containers sparse in $G_n$.}
Call $H_0$ \emph{good} if Lemma~\ref{lem:palette-sampling}
allows the stated edge deletion, and fix one resulting $H$ for each
good $H_0$. The bounds above verify the hypotheses of
Theorem~\ref{thm:containers} for $H$. For each good sample, fix
a resulting family of fingerprints $\mathcal S_H$ and map $f_H$.
These choices depend only on $H_0$ and are independent of $[n]_p$.
Call a fingerprint \emph{compatible} if it contains at most one
labelled copy of each integer.

The containers supplied by the theorem must still have few edges
in the original $G_n$, where supersaturation counts solutions.

\begin{lemma}\label{lem:palette-container}
For every good $H_0$ and $S\in\mathcal S_H$, the container
$\mathcal C=f_H(S)\cup S$ satisfies
\begin{equation}\label{eq:container-few}
 e(G_n[\mathcal C])<\frac{\beta_h}{2}n^{k-\ell},
\end{equation}
and $|S|\le\lambda pn$. Every colouring $\varphi$ for which
$X_\varphi$ is independent in $H$ is supported by one such
container with $S\subseteq X_\varphi$ compatible.
\end{lemma}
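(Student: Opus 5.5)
The plan is to check three things: the fingerprint bound, the edge bound \eqref{eq:container-few}, and the covering property.

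\emph{Fingerprint size.} Theorem~\ref{thm:containers} applies to the good $H$ produced by Lemma~\ref{lem:palette-sampling}. We use $K=2L$, the above $\varepsilon_0$, and $\tau=\alpha p$. Its hypotheses are exactly the two conclusions of that lemma. In addition, every $U\in\mathcal F$ has $|U|\ge\xi N/K_0>\varepsilon_0N$, because $\varepsilon_0\le\xi/(2K_0)$. The theorem then gives fingerprints of size at most $B\tau N=B\alpha p\,Rn=\lambda pn$.

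\emph{Covering property.} Let $X_\varphi$ be independent in $H$. The theorem gives $S\in\mathcal S_H$ with $S\subseteq X_\varphi\subseteq S\cup f_H(S)=\mathcal C$, so $\mathcal C$ supports $\varphi$. Since $X_\varphi$ contains at most one labelled copy of each integer, so does its subset $S$, and $S$ is compatible.

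\emph{The edge bound.} By the theorem, $f_H(S)\notin\mathcal F$, so $e(G_n[f_H(S)])<\xi E$. Using the upper bound in \eqref{eq:labelled-degrees} and $\xi\le\beta_h/(4R^kB_A)$,
\[
\xi E\le \xi R^kB_An^{k-\ell}\le \tfrac{\beta_h}{4}n^{k-\ell}.
\]
Every edge of $G_n[\mathcal C]$ that does not lie in $f_H(S)$ meets $S$ in at least one vertex. The number of such edges is at most
\[
|S|\,\Delta_1(G_n)\le \lambda pn\cdot R^kK_An^{k-\ell-1}=\lambda K_1\,p\,n^{k-\ell}.
\]
Since $p\le1$ and $\lambda K_1\le\beta_h/4$ by \eqref{eq:alpha-choice}, this is at most $\tfrac{\beta_h}{4}n^{k-\ell}$. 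Adding the two contributions gives at most $\tfrac{\beta_h}{2}n^{k-\ell}$.

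To make the inequality strict, use the strict bound $e(G_n[f_H(S)])<\xi E$ on the first term. Alternatively, keep $p\le1$ together with the strict inequality coming from $f_H(S)\notin\mathcal F$. Either way \eqref{eq:container-few} follows.

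The only delicate point is the contribution of edges through $S$. Using the first-degree bound suffices, and no higher-degree estimates are needed. The rest is bookkeeping with the constants chosen earlier.
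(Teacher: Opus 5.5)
Your proof is correct and follows essentially the same route as the paper: $f_H(S)\notin\mathcal F$ gives $e(G_n[f_H(S)])<\xi E\le\beta_h n^{k-\ell}/4$, and the vertices of $S$ add at most $|S|\Delta_1(G_n)\le\lambda K_1 p\, n^{k-\ell}\le\beta_h n^{k-\ell}/4$ further edges, while the covering and compatibility claims come directly from the container theorem. Beyond the paper's proof, you also verify the hypotheses of Theorem~\ref{thm:containers} and compute $B\tau N=\lambda pn$, which the paper leaves implicit.
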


\begin{proof}
Since $f_H(S)\notin\mathcal F$, we have
$e(G_n[f_H(S)])<\xi E\le\beta_hn^{k-\ell}/4$.
Adding one vertex creates at most $\Delta_1(G_n)$ edges, so
\[
 e(G_n[\mathcal C])
 \le e(G_n[f_H(S)])+|S|\Delta_1(G_n)
 <\bigl(\beta_h/4+\lambda K_1p\bigr)n^{k-\ell}
 \le\beta_hn^{k-\ell}/2.
\]
For an independent $X_\varphi$, the BMS theorem gives
$S\subseteq X_\varphi\subseteq f_H(S)\cup S$.
Such an $S$ is compatible, since $X_\varphi$ contains at most
one labelled copy of each integer.
\end{proof}

\medskip
\Needspace{5\baselineskip}
\noindent\textbf{Step III: From a container to a group excess.}
To use supersaturation, we turn a colouring supported by a
container on the random set into one on $[n]$. We do this within
each label-choice group $B_D$ from
Section~\ref{subsec:one-notation}.

\begin{lemma}\label{lem:lists}
Let $n,R\ge1$, $\mathcal C\subseteq[n]\times[R]$, $0<p\le1$
and $\theta>0$. For $D\subseteq[R]$, write
$B_D=\{x\in[n]:\{i:(x,i)\in\mathcal C\}=D\}$.
Suppose that $W\subseteq[n]$ satisfies
\[
 |W\cap B_D|\le p|B_D|+\theta pn\qquad(D\subseteq[R]).
\]
Every colouring $\varphi:V\to[R]$ supported by $\mathcal C$,
with $V\subseteq W$, gives a colouring $\psi:S\to[R]$ supported
by $\mathcal C$ such that
\[
 |S|\ge\frac{|V|}{p}-2^R(R+\theta n),\qquad
 |\psi^{-1}(i)|\le\frac{|\varphi^{-1}(i)|}{p}\quad(i\in[R]).
\]
\end{lemma}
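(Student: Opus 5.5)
The plan is to build $\psi$ separately inside each group $B_D$, by a deterministic ``blow-up by $1/p$'' rounding, and then take unions. Fix $D\subseteq[R]$. Since $\varphi$ is supported by $\mathcal C$, every $x\in V\cap B_D$ has $\varphi(x)\in D$. For $i\in D$ write $m_{D,i}=|\varphi^{-1}(i)\cap B_D|$, so that $\sum_{i\in D}m_{D,i}=|V\cap B_D|\le|W\cap B_D|\le p|B_D|+\theta pn$. If $D=\varnothing$ then $V\cap B_D=\varnothing$, and there is nothing to do. Otherwise, set $q_{D,i}=\lfloor m_{D,i}/p\rfloor$ and let $Q_D=\sum_{i\in D}q_{D,i}$. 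The aim is to choose disjoint subsets $S_{D,i}\subseteq B_D$ with $|S_{D,i}|$ as close to $q_{D,i}$ as possible, and to put $\psi\equiv i$ on $S_{D,i}$. Any such $\psi$ is automatically supported by $\mathcal C$, because every $x\in B_D$ has $(x,i)\in\mathcal C$ for all $i\in D$. This is the point of grouping by label-choice sets.

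The only constraint is room: we need $\sum_i|S_{D,i}|\le|B_D|$. Here $Q_D\le|V\cap B_D|/p\le|B_D|+\theta n$, so we may have to truncate by at most $\theta n$. Concretely, choose $|S_{D,i}|\le q_{D,i}$ with $\sum_i|S_{D,i}|=\min\{Q_D,|B_D|\}$ (greedily decreasing the targets until they fit). Then $|\psi^{-1}(i)\cap B_D|\le q_{D,i}\le m_{D,i}/p$. Summing over $D$ gives $|\psi^{-1}(i)|\le\sum_D m_{D,i}/p=|\varphi^{-1}(i)|/p$, which is the second inequality.

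For the size bound, set $S=\bigcup_{D,i}S_{D,i}$. Floors lose less than $1$ per pair $(D,i)$, hence at most $|D|\le R$ per group. So
\[
 |S\cap B_D|\ge Q_D-\theta n\ge\frac{|V\cap B_D|}{p}-R-\theta n,
\]
where the truncation loss is $Q_D-|B_D|\le\theta n$ when truncation occurs. Summing over the at most $2^R$ groups, which partition $[n]$ and hence partition $V$, gives $|S|\ge|V|/p-2^R(R+\theta n)$.

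I expect no serious obstacle. The only care needed is the bookkeeping in the truncation step: the hypothesis $|W\cap B_D|\le p|B_D|+\theta pn$ is exactly what bounds the overflow by $\theta n$ per group. One should also remember that disjointness across groups is automatic, since the $B_D$ are disjoint.
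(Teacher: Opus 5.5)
Your proposal is correct and follows essentially the same argument as the paper: request $\lfloor m_{D,i}/p\rfloor$ copies of each label $i\in D$ inside $B_D$, and truncate by at most $\theta n$ when the requests exceed $|B_D|$. Support is automatic because every integer of $B_D$ allows every label in $D$, and the per-group losses of at most $R+\theta n$ sum over the $2^R$ groups.
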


\begin{proof}
For each $D\subseteq[R]$ and $i\in D$, set
$y_{D,i}=|B_D\cap\varphi^{-1}(i)|$.
We would like to assign label $i$ to
$\lfloor y_{D,i}/p\rfloor$ distinct integers of $B_D$.
The total number requested in this group is at most
\[
 \sum_{i\in D}\lfloor y_{D,i}/p\rfloor
 \le\frac{|W\cap B_D|}{p}\le |B_D|+\theta n.
\]
If it exceeds $|B_D|$, reduce the requested numbers by a total
of at most $\theta n$ until they sum to $|B_D|$.
Every integer in $B_D$ allows every label in $D$, so the
remaining requests can be assigned to distinct integers of
$B_D$. The empty group of labels makes no requests.

Rounding down loses fewer than $R$ vertices in each group,
and the further reduction loses at most $\theta n$.
Summing over at most $2^R$ groups proves the bound on $|S|$.
For each label, the number assigned never exceeds
$\sum_D y_{D,i}/p=|\varphi^{-1}(i)|/p$, proving the class-size
bound.
\end{proof}

Set $\rho=\varepsilon/16$.
The size event
\begin{equation}\label{eq:sample-size}
 (1-\rho)np\le |[n]_p|\le(1+\rho)np
\end{equation}
fails with probability at most $2\exp(-\rho^2np/3)$.

We next read a container as a set of allowed labels. If it
supports a large colouring of $[n]_p$, some group of integers
with the same allowed labels must occur unusually often in
the random set.

\begin{lemma}\label{lem:palette-discrepancy}
Let $\mathcal C\subseteq[n]\times[R]$ satisfy
$e(G_n[\mathcal C])<\beta_h n^{k-\ell}/2$, and write
$B_D=B_D(\mathcal C)$ for $D\subseteq[R]$.
On~\eqref{eq:sample-size}, if $\mathcal C$ supports a colouring
$\varphi:V\to[R]$ of $V\subseteq[n]_p$ with
\[
 |V|\ge(1-\sigma+3\varepsilon/4)|[n]_p|,\qquad
 |\varphi^{-1}(i)|\le\frac{a_h}{4}|[n]_p|
 \quad(h<i\le R),
\]
then some $D\subseteq[R]$ has
\[
 |[n]_p\cap B_D|>p|B_D|+\theta pn.
\]
\end{lemma}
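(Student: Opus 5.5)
We argue by contradiction. Suppose that every group satisfies $|[n]_p\cap B_D|\le p|B_D|+\theta pn$. We will build a colouring on a dense subset of $[n]$, supported by $\mathcal C$, whose edge count in $G_n[\mathcal C]$ contradicts supersaturation.

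We apply Lemma~\ref{lem:lists} with $W=[n]_p$, which is allowed since $V\subseteq[n]_p$. It gives a colouring $\psi:S\to[R]$ supported by $\mathcal C$ with
\[
 |S|\ge\frac{|V|}{p}-2^R(R+\theta n),\qquad
 |\psi^{-1}(i)|\le\frac{|\varphi^{-1}(i)|}{p}.
\]
On~\eqref{eq:sample-size} we have $|V|\ge(1-\sigma+3\varepsilon/4)(1-\rho)np$. With $\rho=\varepsilon/16$ this gives $|V|/p\ge(1-\sigma+3\varepsilon/4-\varepsilon/16)n$. The choice $\theta=\varepsilon/(16\cdot 2^R)$ makes $2^R\theta n=\varepsilon n/16$, and $2^RR=o(n)$. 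Hence, for large $n$,
\[
 |S|\ge(1-\sigma+\varepsilon/2)n,
\]
which is the density required by Lemma~\ref{lem:densecount} with $\zeta=\varepsilon/2$.

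We next check the light classes. For $h<i\le R$, using $|[n]_p|\le(1+\rho)np$,
\[
 |\psi^{-1}(i)|\le\frac{a_h}{4}\cdot\frac{|[n]_p|}{p}\le\frac{a_h(1+\rho)}{4}n\le a_hn.
\]
The protected labels are $[h]$; in Lemma~\ref{lem:densecount} they need not be single original classes, since a protected label there is just a label allowed to repeat. So Lemma~\ref{lem:densecount} applies to $\psi$ on $S$, and $\psi$ has at least $\beta_hn^{k-\ell}$ ordered admissible solutions.

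By the exact identity $e(G_n[X_\psi])=\#\{\text{ordered admissible solutions of }\psi\}$, together with $X_\psi\subseteq\mathcal C$ (because $\psi$ is supported by $\mathcal C$), we get
\[
 e(G_n[\mathcal C])\ge\beta_hn^{k-\ell}>\frac{\beta_h}{2}n^{k-\ell}.
\]
This contradicts the hypothesis on $\mathcal C$. Hence some $D$ has $|[n]_p\cap B_D|>p|B_D|+\theta pn$.

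The main point needing care is the bookkeeping of constants: the losses from $\rho$, from $2^R\theta$, and from the rounding term $2^RR$ must together stay below $\varepsilon/4$. A second point is that Lemma~\ref{lem:densecount} must be applied with the hypotheses as literally stated, namely $h$ protected labels and every light class of size at most $a_hn$. We also need that the definition of admissibility used by $G_n$, where repeated labels lie in $[h]$, agrees with the one in Lemma~\ref{lem:densecount}.
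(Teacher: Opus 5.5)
Your proof is correct and follows the paper's argument essentially step for step. You assume every group satisfies the bound, apply Lemma~\ref{lem:lists} with $W=[n]_p$, verify the density $(1-\sigma+\varepsilon/2)n$ and the light-class bound $a_hn$, and then invoke Lemma~\ref{lem:densecount} through $X_\psi\subseteq\mathcal C$ to contradict $e(G_n[\mathcal C])<\beta_hn^{k-\ell}/2$, with the same bookkeeping as the paper ($11\varepsilon/16$ after the $\rho$ loss, then at most $\varepsilon/8$ lost in Lemma~\ref{lem:lists}).
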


\begin{proof}
Suppose that $\mathcal C$ supports such a colouring and that
all its groups satisfy
\begin{equation}\label{eq:cell-bound}
 |[n]_p\cap B_D|\le p|B_D|+\theta pn
 \qquad(D\subseteq[R]).
\end{equation}
The size assumption and~\eqref{eq:sample-size} imply
\[
 \frac{|V|}{p}
 \ge(1-\sigma+3\varepsilon/4)(1-\rho)n
 \ge(1-\sigma+11\varepsilon/16)n.
\]
For all sufficiently large $n$,
$2^R(R+\theta n)\le\varepsilon n/8$.
Lemma~\ref{lem:lists} now gives a supported colouring
$\psi:S\to[R]$ with
\[
 |S|\ge(1-\sigma+9\varepsilon/16)n
 \ge(1-\sigma+\varepsilon/2)n.
\]
Every light class has size at most
\[
 \frac{a_h|[n]_p|}{4p}
 \le\frac{a_h(1+\rho)n}{4}\le a_hn.
\]
Lemma~\ref{lem:densecount} gives at least
$\beta_hn^{k-\ell}$ admissible solutions in this supported
colouring, contradicting the assumed bound on $e(G_n[\mathcal C])$.
Hence at least one group violates~\eqref{eq:cell-bound}.
\end{proof}

\medskip
\Needspace{5\baselineskip}
\noindent\textbf{Step IV: A uniform probability bound.}
The exceptional group depends on the container. Its fingerprint
fixes the container, while conditioning on the integers in the
fingerprint leaves all other integers independent. This gives
a probability bound uniform over the good edge samples.

\begin{lemma}\label{lem:palette-probability}
There is $c_*>0$ such that, for all sufficiently large $n$,
all $Cn^{-1/m(A)}\le p\le1$, and every good $H_0$ with its
chosen $H$, the following event
has probability at most $e^{-c_*np}$ over the independent choice
of $[n]_p$: there are $V\subseteq[n]_p$ and a colouring
$\varphi:V\to[R]$ such that
\[
 |V|\ge(1-\sigma+3\varepsilon/4)|[n]_p|,\qquad
 |\varphi^{-1}(i)|\le\frac{a_h}{4}|[n]_p|
 \quad(h<i\le R),
\]
and $X_\varphi$ is independent in $H$.
\end{lemma}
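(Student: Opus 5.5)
We prove the bound by a union bound over compatible fingerprints. Fix a good $H_0$ with its chosen $H$, family $\mathcal S_H$ and map $f_H$; all of these are independent of $[n]_p$. Suppose the event occurs, witnessed by $V$ and $\varphi$, and assume the size event~\eqref{eq:sample-size} holds; its failure costs only $2e^{-\rho^2np/3}$. By Lemma~\ref{lem:palette-container} there is a compatible $S\in\mathcal S_H$ with $S\subseteq X_\varphi$ and $|S|\le\lambda pn$, and the container $\mathcal C=f_H(S)\cup S$ supports $\varphi$ and satisfies~\eqref{eq:container-few}. Lemma~\ref{lem:palette-discrepancy} then yields some $D\subseteq[R]$ with $|[n]_p\cap B_D|>p|B_D|+\theta pn$. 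Write $\pi(S)\subseteq[n]$ for the set of integers underlying $S$. Compatibility gives $|\pi(S)|=|S|$, and $S\subseteq X_\varphi$ gives $\pi(S)\subseteq[n]_p$.

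The event is therefore contained in the union, over compatible $S$ with $|S|\le\lambda pn$ and over $D\subseteq[R]$, of the events
\[
 \pi(S)\subseteq[n]_p \quad\text{and}\quad |[n]_p\cap B_D\setminus\pi(S)|>p|B_D|+\theta pn-|S|.
\]
Here $B_D=B_D(f_H(S)\cup S)$ is determined by $S$ alone. Since $|S|\le\lambda pn\le\theta pn/2$ by~\eqref{eq:alpha-choice}, the second event requires a binomial variable with mean at most $p|B_D|$ to exceed its mean by $\theta pn/2$. The two events involve disjoint sets of integers, so they are independent. Chernoff's inequality bounds the second by
\[
 \exp\Bigl(-\frac{(\theta pn/2)^2}{2(p|B_D|+\theta pn/2)}\Bigr)\le e^{-\theta^2pn/12},
\]
using $|B_D|\le n$ and $\theta\le1$. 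The first has probability $p^{|S|}$.

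It remains to sum over $S$. The number of compatible fingerprints with $|S|=s$ is at most $\binom{n}{s}R^s$, so
\[
 \sum_{s\le\lambda pn}\binom ns R^sp^s\le\sum_{s\le\lambda pn}\Bigl(\frac{eRnp}{s}\Bigr)^s\le(\lambda pn+1)\Bigl(\frac{eR}{\lambda}\Bigr)^{\lambda pn}=\exp\Bigl(\lambda pn\log\frac{eR}{\lambda}+O(\log n)\Bigr).
\]
The middle inequality holds because $(eRnp/s)^s$ increases in $s$ for $s\le Rnp$. By~\eqref{eq:alpha-choice} the exponent is at most $\theta^2pn/48+O(\log n)$. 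Multiplying by the $2^R$ choices of $D$ and by the Chernoff bound gives a total of at most $2^R\exp(-\theta^2pn/16+O(\log n))$. Since $np\ge Cn^{1-1/m(A)}\to\infty$ polynomially by Lemma~\ref{lem:rank}, the $O(\log n)$ term is absorbed, and adding the size-event failure gives $e^{-c_*np}$ for any $c_*<\min\{\theta^2/16,\rho^2/3\}$ and large $n$. This bound is uniform in $H_0$.

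The main obstacle is the conditioning step. The naive approach conditions on $\pi(S)\subseteq[n]_p$, but $S$ is not random; it is determined by $\varphi$, which depends on $[n]_p$. Bounding the event by an explicit union over all compatible $S$ removes this circularity. The same union makes the factor $p^{|S|}$ essential: without it, $\binom{n}{s}$ alone would defeat the Chernoff bound. One must also check that moving $\pi(S)$ out of the count shifts the deviation by at most $\lambda pn$; this is exactly the condition $\lambda\le\theta/2$ in~\eqref{eq:alpha-choice}.
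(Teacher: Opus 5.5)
Your proposal is correct and follows the paper's proof essentially step for step. Both arguments take a union bound over compatible fingerprints with $|S|\le\lambda pn$ and over the $2^R$ groups, multiply the factor $p^{|S|}$ by a binomial upper-tail bound $e^{-\theta^2np/12}$ (using $|S|\le\theta np/2$), and control $\sum_s\binom ns(Rp)^s$ through \eqref{eq:alpha-choice}. The only difference is cosmetic: you write the key step as independence of events on disjoint sets of integers, where the paper conditions on $F\subseteq[n]_p$, and the two are equivalent.
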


\begin{proof}
On~\eqref{eq:sample-size}, Lemmas~\ref{lem:palette-container}
and~\ref{lem:palette-discrepancy} show that such a colouring
gives a compatible fingerprint whose projection lies in $[n]_p$
and whose container violates~\eqref{eq:cell-bound}.
Fix a compatible fingerprint and let $F\subseteq[n]$ be its projection,
with $s=|F|\le\lambda np$. The container and its groups are
now fixed. Conditional on $F\subseteq[n]_p$, the remaining
integers are still sampled independently, and
\[
 |[n]_p\cap B_D|
 =|F\cap B_D|+\operatorname{Bin}(|B_D\setminus F|,p).
\]
The conditional expectation is at most $p|B_D|+s$.
Since $s\le\theta np/2$, failure of~\eqref{eq:cell-bound}
requires the binomial variable to exceed its mean by at least
$\theta np/2$. The elementary upper-tail bound
\[
 \Pr(Y\ge\EE Y+u)
 \le\exp\left(-\frac{u^2}{2(\EE Y+u/3)}\right)
 \qquad\text{for a binomial variable }Y
\]
therefore gives a conditional probability of at most
$\exp(-\theta^2np/12)$ for each group.
There are at most $2^R$ groups, while
$\Pr(F\subseteq[n]_p)=p^s$.

For a fixed set $F$ of size $s$, there are at most $R^s$
compatible fingerprints: choose one label for each integer.
A union bound over all fingerprints
therefore bounds the probability of this event by
\begin{equation}\label{eq:fingerprint-sum}
 2^R e^{-\theta^2np/12}
 \sum_{s=0}^{\lfloor\lambda np\rfloor}
       \binom ns (Rp)^s.
\end{equation}
Using $\binom ns\le(en/s)^s$, the sum is at most
\[
 (\lambda np+1)
 \exp\left(\lambda np\log\frac{eR}{\lambda}\right).
\]
Here $s\log(eRnp/s)$ is increasing for
$0<s\le\lambda np$, by~\eqref{eq:alpha-choice}.
The same choice bounds the exponent by $\theta^2np/48$.
Since $np\to\infty$ uniformly in the stated range of $p$,
the factor $2^R(\lambda np+1)$ can also be absorbed into
$\exp(\theta^2np/48)$ for all large $n$.
Thus~\eqref{eq:fingerprint-sum} is at most
$\exp(-\theta^2np/24)$. Adding the probability that
\eqref{eq:sample-size} fails gives
\[
 2e^{-\rho^2np/3}+e^{-\theta^2np/24}\le e^{-c_*np}
\]
for some $c_*>0$ and all sufficiently large $n$.
\end{proof}

We have bounded the probability of a qualifying colouring
independent in $H$. A colouring with few solutions has a useful
chance to become independent when $H_0$ is sampled. We compare
these two probabilities to finish the proof.

\begin{proof}[Proof of Lemma~\ref{lem:palette-count}]
Set $A_0=4D_0R/(b_A\alpha^{k-1})$ and choose
$0<\gamma\le b_A/2$ so that
$A_0\gamma<\min\{c_*/2,dD_0R\alpha^{1-k}/2\}$.
Let $\mathcal B$ be the event that a colouring covered by the
lemma has fewer than $\gamma p^kn^{k-\ell}$ solutions.
For each outcome of $[n]_p$ in $\mathcal B$, fix one such
colouring before sampling $H_0$, and write $X=X_\varphi$.
Since $E\ge b_An^{k-\ell}$ and
$M\le2D_0N\tau^{1-k}$ for large $n$,
\[
 \Pr_{H_0}\bigl(H_0[X]\text{ has no edges}\bigr)
 =\left(1-\frac{e(G_n[X])}{E}\right)^M
 \ge e^{-A_0\gamma np}.
\]
Since $M\ge D_0R\alpha^{1-k}np$, the failure probability in
Lemma~\ref{lem:palette-sampling} is at most
$e^{-dD_0R\alpha^{1-k}np}$. By the choice of $\gamma$, $H_0$
is good and misses $X$ with probability at least
$\tfrac12 e^{-A_0\gamma np}$. Then $X$ is independent in $H$.
Independence of the two samples and
Lemma~\ref{lem:palette-probability} give
\[
 \tfrac12e^{-A_0\gamma np}\Pr(\mathcal B)
 \le\Pr\bigl(H_0\text{ good and }[n]_p
      \text{ has a qualifying colouring independent in }H\bigr)
 \le e^{-c_*np}.
\]
Lemma~\ref{lem:palette-degrees} sets the sampling scale, and
Lemma~\ref{lem:palette-sampling} gives a sample to which BMS
applies. Lemma~\ref{lem:palette-container} puts its independent
colourings in containers sparse in $G_n$.
Lemma~\ref{lem:lists} transfers supported colourings to $[n]$,
so Lemma~\ref{lem:palette-discrepancy} forces an excess in some
$B_D$. Lemma~\ref{lem:palette-probability} bounds these excesses
uniformly. The comparison above gives
$\Pr(\mathcal B)\le e^{-cnp}$ for any
$0<c<c_*-A_0\gamma$ and all large $n$, proving
Lemma~\ref{lem:palette-count}.
\end{proof}

We now choose the light-class bounds supplied by the fixed-palette
count and apply colour reduction to complete the proof.

\begin{proof}[Proof of Proposition~\ref{prop:counting}]
Fix $0<\varepsilon\le\sigma$ and set $\eta=\varepsilon/8$.
For each $h\ge1$, take $a_h$ from
Lemma~\ref{lem:palette-count}, with this choice of $\varepsilon$.
These bounds are chosen before the palette size $R$.
Apply Lemma~\ref{lem:reduction} to obtain fixed integers $H<R$.

For each $1\le h\le H$, apply Lemma~\ref{lem:palette-count}
with this $h$ and $R$. Take the maximum of the resulting
threshold constants and the minimum of their positive
counting constants. We also require $[n]_p\ne\varnothing$;
this fails with probability $(1-p)^n\le e^{-np}$.
A union bound shows that, for suitable $C,c,\gamma>0$, all these
events hold simultaneously with probability at least $1-e^{-cnp}$.
Here $np\to\infty$ absorbs the fixed number of events.

On this event, fix any $U\subseteq[n]_p$ with
$|U|\ge(1-\sigma+\varepsilon)|[n]_p|$ and any colouring of $U$.
The palette reduction gives a merged colouring
$\varphi:V\to[R]$ and some $1\le h\le H$, with
\[
 |V|\ge(1-\sigma+7\varepsilon/8)|[n]_p|,
 \qquad
 |\varphi^{-1}(i)|\le\frac{a_h}4|[n]_p|\quad(i>h).
\]
Lemma~\ref{lem:palette-count} gives at least
$\gamma p^k n^{k-\ell}$ ordered admissible solutions in $V$.
By Lemma~\ref{lem:reduction}, undoing the mergers preserves
their equality patterns, so all these solutions are canonical
for the original colouring. The event did not depend on $U$
or its colouring, proving the proposition.
\end{proof}

The uniform count now gives the whole-set 1-statement.

\begin{proof}[Proof of Theorem~\ref{thm:one}]
Take $\sigma$ in Lemma~\ref{lem:gap} and apply
Proposition~\ref{prop:counting} with $\varepsilon=\sigma$
and $U=[n]_p$.
\end{proof}

\medskip

\section{The 0-statement and robustness}\label{sec:zero}

There are two obstructions below the Rado scale. A two-colouring
rules out canonical solutions on the whole random set for the
specified block patterns. For every block pattern, a small deletion
can instead remove all proper solutions.
Throughout this section, fix an irredundant partition-regular matrix
$A\in\Z^{\ell\times k}$ with $k\ge3$ and $\rank(A)=\ell\ge1$.

\subsection{The whole-set 0-statement}

The random Rado 0-statement supplies a two-colouring that also
avoids the canonical patterns in Proposition~\ref{prop:zero}.

\begin{proof}[Proof of Proposition~\ref{prop:zero}]
By the 0-statement of R\"odl and Ruci\'nski~\cite{RR}, with
the corrected proof of Hancock and Treglown~\cite[Section~3]{HT},
for a sufficiently small $c_0>0$ and every $p\le c_0n^{-1/m(A)}$,
with high probability $[n]_p$ has a two-colouring with no
monochromatic proper solution. Take this two-colouring.
It has neither a monochromatic proper solution nor a rainbow one.
If the prescribed partition has one block, its pattern is monochromatic.
If it has at least three blocks, its pattern requires at least three
different colours. Thus the
colouring has no proper canonical solution.
\end{proof}

Note the only case not covered by this construction is the two-block partition. Indeed, in next section we will see that two-block partition-regular matrix may not hold the corresponding 0-statement (the Schur matrix).

\subsection{Deleting proper solutions}

A set is \emph{$A$-free} if it contains no proper solution of
$A\mathbf x=0$. To prove the robust lower statement for every
block pattern, we remove a small proportion of the random set
and leave an $A$-free subset.

\begin{proposition}\label{prop:alteration}
Let $A\in\Z^{\ell\times k}$ be an irredundant partition-regular
matrix with $k\ge3$ and $\rank(A)=\ell\ge1$.
For every $\varepsilon\in(0,1)$, there is $b_\varepsilon>0$,
depending only on $A$ and $\varepsilon$, such that for every
sequence $p=p(n)\in[0,1]$ with $p\le b_\varepsilon n^{-1/m(A)}$,
with high probability there is $U\subseteq[n]_p$ satisfying
$|U|\ge(1-\varepsilon)|[n]_p|$ and containing no proper solution
of $A\mathbf x=0$.
\end{proposition}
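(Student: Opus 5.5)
We use the deletion (alteration) method: delete one vertex from every proper solution in $[n]_p$ and show that, at this scale, this removes only an $\varepsilon$-fraction of the random set. The immediate obstacle is that the expected number of proper solutions in $[n]_p$ is $\Theta(p^kn^{k-\ell})$, which may be much larger than $np$. Taking $J=[k]$ in~\eqref{eq:m(A)} gives only $p^{k-1}n^{k-\ell-1}\le b^{k-1}n^{(k-\ell-1)-(k-1)/m(A)}\le b^{k-1}$, using $m(A)\ge (k-1)/(k-\ell-1)$ from Lemma~\ref{lem:rank}. So the total count is at most $b^{k-1}np$, and the naive argument already works in expectation. To make it robust, we bound not the number of solutions but the number of vertices we must delete.

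\textbf{Step 1 (expectation).} Let $Z$ be the number of ordered proper solutions of $A\mathbf x=0$ in $[n]_p$. By Lemma~\ref{lem:degrees}, $\E Z\le B_An^{k-\ell}p^k$, and by the computation above $\E Z\le B_Ab^{k-1}np$. Choose $b=b_\varepsilon$ so small that $B_Ab^{k-1}\le \varepsilon/8$. Then $\E Z\le \varepsilon np/8$.

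\textbf{Step 2 (concentration or Markov).} If $np\to\infty$, then $|[n]_p|\ge np/2$ with high probability. Markov's inequality alone only gives $\Pr(Z>\varepsilon np/2)\le 1/4$, which is not enough for a with-high-probability statement. Two fixes are available. The simpler is to pick $b_\varepsilon$ depending on an auxiliary slowly vanishing factor, but $b_\varepsilon$ must be a constant, so this does not work. The better route is a second-moment bound: $\Var Z$ is controlled by pairs of solutions sharing $j\ge1$ coordinates. For a shared set $W$ with $|W|=j$ and union $J$, each such pair contributes $p^{2k-j}n^{2(k-\ell)-r(W)}$-type terms, and Lemma~\ref{lem:fibres} with $r(W)\ge 1+(j-1)/m(A)$ bounds these terms by $O(np)\cdot(\text{powers of }b)$ at $p\le bn^{-1/m(A)}$. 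Hence $\Var Z=O(np)$ and Chebyshev gives $Z\le \varepsilon np/4$ with high probability when $np\to\infty$.

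\textbf{Step 3 (small $np$).} If $np$ stays bounded along a subsequence, $[n]_p$ has $O(1)$ elements. Then $\E Z=O(p^kn^{k-\ell})=O(np\cdot (np)^{k-1}n^{-(\ell)}\cdots)\to0$, and more simply $\E Z\le B_Ab^{k-1}np\cdot p^{\,\cdot}$. The cleanest form of the bound is $\E Z\le B_A n^{k-\ell}p^k\le B_A (np)^{k-1}\,p\, n^{1-\ell}\cdot n^{\ell}\dots$; more directly, $\E Z\le b^{k-1}B_A np\cdot$ and, since $p^{k-1}n^{k-\ell-1}\to 0$ whenever $p=o(n^{-1/m(A)})$, in fact $\E Z=o(1)$ once $np=O(1)$ and $k-\ell-1<(k-1)/m(A)$ or $p\to 0$ fast. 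So with high probability $Z=0$ and we take $U=[n]_p$. We handle the two regimes by passing to subsequences: every sequence has a subsequence on which either $np\to\infty$ or $np$ is bounded.

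\textbf{Step 4 (deletion).} On the event of Step 2, delete one coordinate of each proper solution; this removes at most $Z\le\varepsilon np/4\le\varepsilon|[n]_p|$ elements and leaves an $A$-free set $U$.

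The main obstacle is the variance estimate in Step 2. We must check, for every overlap pattern $W\subseteq J$, that the exponent of $n$ in the contribution is at most $1$ after multiplying by $p^{|J|}$. This is exactly where the maximum over $J$ in~\eqref{eq:m(A)} is used, rather than only $J=[k]$. If the variance estimate proves messy, the fallback is Janson's inequality, or a Kim–Vu type upper tail bound for $Z$ at mean $O(np)$.
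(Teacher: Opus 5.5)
There is a genuine gap, and it is in Step~1. From $m(A)\ge(k-1)/(k-\ell-1)$ you get $(k-1)/m(A)\le k-\ell-1$, so $n^{(k-\ell-1)-(k-1)/m(A)}\ge1$, not $\le1$. Hence
$$\E Z/(np)\asymp b^{k-1}n^{(k-\ell-1)-(k-1)/m(A)}.$$
This ratio is bounded only when the maximum in \eqref{eq:m(A)} is attained at $J=[k]$. Otherwise it tends to infinity, and no choice of the constant $b_\varepsilon$ helps.

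The paper's own example shows the failure. For $A=(1,1,-1,0)$, the set $J=[k]$ gives only $3/2$, while $J=\{1,2,3\}$ gives $m(A)=2$. At $p=bn^{-1/2}$, the number of ordered proper solutions in $[n]_p$ is, with high probability, of order $b^4n$: a Schur triple times a free fourth coordinate. But $|[n]_p|\approx b\sqrt n$. So $Z\le\varepsilon np/4$ fails, and Step~4 deletes a number of vertices that is not controlled by $|[n]_p|$. Your variance bound in Step~2 is also false: the diagonal term of $\Var Z$ alone is of order $\E Z\gg np$. Your closing remark that the maximum over $J$ must enter is right. But when you count complete solutions there is no place for it to enter, because the first moment only sees $J=[k]$.

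The missing idea is to delete along a projection that attains $m(A)$, as in Lemma~\ref{lem:projection}. Choose such a $J$, with $j=|J|$ and $D=\dim\pi_J(\mathcal V)$, so that $D-1=(j-1)/m(A)$. Let $X$ count the supports of proper vectors in $\pi_J(\mathcal V)\cap[n]^J$ that lie in $[n]_p$. Every proper solution of $A\mathbf x=0$ restricts on $J$ to such a vector. So removing one element of each counted support leaves an $A$-free set, even though some projected vectors do not extend to solutions.

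The moment bounds now work:
\begin{itemize}
\item $\E X\le n^Dp^j=np\,(pn^{1/m(A)})^{j-1}\le b^{j-1}np$.
\item The fibre bounds of Lemma~\ref{lem:fibres} give $\Delta_t\le Kn^{(j-t)/m(A)}$ for this projected hypergraph.
\item Hence $\Var X\le\sum_t\binom jt n^D\Delta_tp^{2j-t}=O(np)$ for $p\le n^{-1/m(A)}$.
\end{itemize}
Your overlap computation goes through here precisely because the exponent $D-1$ matches $m(A)$. Chebyshev then handles $np\to\infty$. For $np\le\log n$, one has $\E X\le(np)^j/n^{j-D}\le(\log n)^j/n=o(1)$, because $j-D\ge1$.

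Your Step~3 is fine in substance even for complete solutions, since $\E Z\le B_A(np)^kn^{-\ell}\to0$ when $np$ is bounded. The failure is only in the regime $np\to\infty$.
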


We count a projection that attains $m(A)$, rather than all complete
solutions. The full system may have too many solutions for alteration:
for example, $A=(1,1,-1,0)$ has $m(A)=2$,
but at $p=cn^{-1/2}$ the expected number of proper four-coordinate
solutions is of order $n$. Its Schur projection has only order
$c^2np$ expected solutions. The following lemma gives the required
projection for every matrix.

\begin{lemma}\label{lem:projection}
Let $A\in\Z^{\ell\times k}$ be an irredundant partition-regular
matrix with $k\ge3$ and $\rank(A)=\ell\ge1$. Choose
$J\subseteq[k]$, with $|J|\ge2$, attaining the maximum defining
$m(A)$. Set $\mathcal V=\ker_{\Q}A$, $j=|J|$ and
$D=\dim\pi_J(\mathcal V)$, where $\pi_J$ is coordinate projection.
For each $n\ge1$, let $H_J(n)$ be the simple $j$-uniform
hypergraph on $[n]$ whose edges are the supports of vectors in
$\pi_J(\mathcal V)\cap[n]^J$ with distinct coordinates. Then:
\begin{enumerate}
\item $j\ge3$, $2\le D\le j-1$ and
 $D-1=(j-1)/m(A)$.
\item Every independent set in $H_J(n)$ contains no proper solution
 of $A\mathbf x=0$.
\item There is $K_A>0$, depending only on $A$, such that
\[
 e(H_J(n))\le n^D,\qquad
 \Delta_t(H_J(n))\le K_A n^{(j-t)/m(A)}
 \quad(1\le t\le j).
\]
\end{enumerate}
\end{lemma}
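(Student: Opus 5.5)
The plan is to reduce all three parts to the rank identities of Lemmas~\ref{lem:rank} and~\ref{lem:fibres}, since $D=r(J)$ in that notation.

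For (i), Lemma~\ref{lem:fibres} gives $D=r(J)=j-\ell+\rank(A_{\overline J})$. The denominator in~\eqref{eq:m(A)} for this $J$ is $D-1$, so the maximality of $J$ gives $D-1=(j-1)/m(A)$. Lemma~\ref{lem:rank} gives $m(A)>1$, hence $D-1<j-1$, i.e.\ $D\le j-1$.

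For $D\ge2$ and $j\ge3$, I would argue as follows.
\begin{itemize}
\item If $j=2$, then $D=r(J)=2$ by Lemma~\ref{lem:rank}. The ratio is then $1/1=1<m(A)$, so such a $J$ cannot attain the maximum. Hence $j\ge3$.
\item Since $j\ge2$, the bound $r(J)\ge r(\{i,i'\})=2$ for $i,i'\in J$ gives $D\ge2$.
\end{itemize}

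For (ii), take a proper solution $\mathbf x\in U^k$ of $A\mathbf x=0$, with $U\subseteq[n]$ independent in $H_J(n)$. Then $\pi_J(\mathbf x)\in\pi_J(\mathcal V)\cap[n]^J$ has distinct coordinates, so its support is an edge of $H_J(n)$ lying in $U$. This contradicts independence. Note that $\mathcal V$ is the rational kernel and integer solutions lie in it.

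For (iii), the edge bound follows from the affine-space count in Lemma~\ref{lem:fibres} with $W=\varnothing$: there are at most $n^{D}$ points of $\pi_J(\mathcal V)\cap[n]^J$, and each edge is the support of at least one of them.

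For the $t$-degree, fix a $t$-set $T$ of integers. An edge containing $T$ comes from a vector in which some set $W\subseteq J$ of $t$ positions carries the values of $T$. There are at most $\binom{j}{t}t!$ ways to choose $W$ and the assignment. Given these, Lemma~\ref{lem:fibres} leaves at most $n^{D-r(W)}$ vectors.

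It remains to bound $D-r(W)\le (j-t)/m(A)$. For $t\ge2$, Lemma~\ref{lem:fibres} gives $r(W)\ge1+(t-1)/m(A)$, so
\[
D-r(W)\le (j-1)/m(A)-(t-1)/m(A)=(j-t)/m(A).
\]
For $t=1$, $r(W)=1$ and $D-1=(j-1)/m(A)$ directly. The case $t=j$ gives $n^0$, i.e.\ a bounded degree, which is consistent.

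Taking $K_A=\max_t\binom{j}{t}t!$, with the maximum also over the finitely many choices of $J$, finishes (iii). The only delicate point I anticipate is (i), specifically ruling out $j=2$. This uses the strict inequality $m(A)>1$, which is exactly why $\ell\ge1$ was required.
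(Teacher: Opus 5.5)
Your proposal is correct and follows the paper's proof essentially step for step. Part (i) uses the projection formula, $m(A)>1$, and the ratio one for two-element sets. Part (ii) projects a proper solution onto $J$. Part (iii) uses the affine-space count and the fibre bound $n^{D-r(W)}$, with $r(W)\ge1+(t-1)/m(A)$ for $t\ge2$ and $r(W)=1$ for $t=1$. The only cosmetic difference is that you obtain $D\ge2$ from the monotonicity $r(J)\ge r(\{i,i'\})=2$, whereas the paper reads it off directly from $D-1=(j-1)/m(A)>0$.
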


\begin{proof}
\textit{(i)} Lemma~\ref{lem:fibres} and rank preservation give dimension two for every
two-coordinate projection of $\mathcal V$, so two-element sets have ratio
one in the definition of $m(A)$. Since $m(A)>1$, a maximizing $J$
has $j\ge3$. The projection formula gives $m(A)=(j-1)/(D-1)$,
hence $2\le D\le j-1$ and the stated identity for $D$.

\smallskip
\textit{(ii)} Every proper complete solution has an edge on its
$J$-coordinates. Thus an independent set in $H_J(n)$ is $A$-free,
even though some projected vectors need not extend to positive
integer solutions.

\smallskip
\textit{(iii)} The affine-space count in the proof of Lemma~\ref{lem:fibres} gives
at most $n^D$ projected vectors, and therefore at most $n^D$ edges.
To bound a $t$-degree, assign the prescribed $t$ distinct values
to positions $W\subseteq J$, with $|W|=t$. The fibre bound in
Lemma~\ref{lem:fibres} gives at most $n^{D-r(W)}$ projected vectors
for each assignment, where $r(W)=\dim\pi_W(\mathcal V)$. For $t\ge2$,
that lemma gives $r(W)\ge1+(t-1)/m(A)$, so this is at most
$n^{D-1-(t-1)/m(A)}=n^{(j-t)/m(A)}$.
For $t=1$, positive solvability gives $r(W)=1$
and the bound is $n^{D-1}=n^{(j-1)/m(A)}$.
Summing over the bounded number of
assignments proves the degree estimate.
\end{proof}

We now control the number of projected edges in the random set.
The variance bound gives concentration whenever $np$ tends to
infinity sufficiently fast; for smaller $np$, the expected number
of edges already tends to zero.

\begin{proof}[Proof of Proposition~\ref{prop:alteration}]
\claimsof{prop:alteration}
Choose $J$ attaining $m(A)$, and take $j$, $D$ and $H=H_J(n)$
as in Lemma~\ref{lem:projection}. Let $X$ count the edges of $H$
contained in $[n]_p$.
We first record the two moment estimates used for the deletions.

\begin{claim}\label{cl:alteration-moments}
There is $K>0$, depending only on $A$, such that for every
$p\le n^{-1/m(A)}$,
\[
 \EE X\le np\bigl(pn^{1/m(A)}\bigr)^{j-1},\qquad \Var X\le Knp.
\]
\end{claim}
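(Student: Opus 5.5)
The plan is to compute the first two moments of $X=\sum_{e\in E(H)}\mathbf 1_{e\subseteq[n]_p}$ directly, using only the edge count and degree bounds of Lemma~\ref{lem:projection}(iii) and the identity $D-1=(j-1)/m(A)$ from part~(i).

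\textbf{Expectation.} Each edge is a $j$-set, so
\[
 \EE X=e(H)p^j\le n^Dp^j .
\]
Writing $n^D=n\cdot n^{D-1}=n\cdot n^{(j-1)/m(A)}$ gives
\[
 \EE X\le np\,\bigl(pn^{1/m(A)}\bigr)^{j-1},
\]
which is the claimed bound. It needs no restriction on $p$.

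\textbf{Variance.} Write
\[
 \Var X\le\sum_{t=1}^{j}\ \sum_{|e\cap e'|=t}p^{2j-t},
\]
where pairs of edges sharing no vertex contribute nothing, by independence. For fixed $t$, count the pairs as follows. There are at most $e(H)\le n^D$ choices of $e$ and $\binom jt$ choices of the shared $t$-set. Given the shared set, there are at most $\Delta_t(H)\le K_An^{(j-t)/m(A)}$ choices of $e'$. So the $t$-term is at most
\[
 \binom jt K_A\, n^Dn^{(j-t)/m(A)}p^{2j-t}
 =\binom jt K_A\,\EE\text{-type factor}\cdot n^{(j-t)/m(A)}p^{j-t}\cdot n^Dp^j .
\]
Now substitute $n^Dp^j= np\,(pn^{1/m(A)})^{j-1}$. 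The $t$-term becomes
\[
 \binom jt K_A\; np\;\bigl(pn^{1/m(A)}\bigr)^{(j-1)+(j-t)} .
\]
Since $p\le n^{-1/m(A)}$, we have $pn^{1/m(A)}\le1$. The exponent $(j-1)+(j-t)$ is nonnegative, so each term is at most $\binom jtK_A\,np$. Summing over $t\le j$ gives $\Var X\le Knp$ with $K=2^jK_A$.

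\textbf{Main obstacle.} The delicate point is the $t=1$ and small-$t$ degree bounds. These rely on Lemma~\ref{lem:projection}(iii), in particular on $r(\{i\})=1$, which makes $\Delta_1\le K_An^{(j-1)/m(A)}$. Once those bounds are granted, the estimate is a routine bookkeeping of exponents via $D-1=(j-1)/m(A)$. One only has to check that the exponent of $pn^{1/m(A)}$ is nonnegative for every $t$, including $t=j$, where it equals $j-1\ge0$.
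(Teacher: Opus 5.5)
Your proof is correct and follows the paper's argument: you bound $\EE X\le e(H)p^j\le n^Dp^j$ and rewrite it using $D-1=(j-1)/m(A)$, and you bound $\Var X$ by summing $e(H)\binom jt\Delta_t(H)p^{2j-t}$ over overlap sizes $1\le t\le j$, so that each term becomes $np\,(pn^{1/m(A)})^{2j-t-1}\le np$. The only blemish is the stray ``$\EE$-type factor'' in your intermediate display, which should simply be deleted; the line after it is correct.
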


\begin{poc}
The edge bound and $D-1=(j-1)/m(A)$ give
\[
 \EE X\le n^Dp^j=np\bigl(pn^{1/m(A)}\bigr)^{j-1}.
\]
Disjoint edge indicators are independent. For each edge, at most
$\binom jt\Delta_t(H)$ other edges meet it in exactly $t$ vertices,
and their joint appearance probability is $p^{2j-t}$. Including
the diagonal at $t=j$, the degree bounds therefore give
\begin{align*}
 \Var X\le\sum_{t=1}^j e(H)\binom jt\Delta_t(H)p^{2j-t}
 &\le K'_A n^D\sum_{t=1}^j n^{(j-t)/m(A)}p^{2j-t}\\
 &=K'_A np\sum_{t=1}^j\bigl(pn^{1/m(A)}\bigr)^{2j-t-1}\le Knp.
\end{align*}
The equality uses $D-1=(j-1)/m(A)$; the last inequality
uses $p\le n^{-1/m(A)}$.
\end{poc}

Choose $b_\varepsilon\in(0,1]$ with
$b_\varepsilon^{j-1}\le\varepsilon/4$. For
$p\le b_\varepsilon n^{-1/m(A)}$, the claim gives
$\EE X\le\varepsilon np/4$.

If $np>\log n$, Chebyshev's and Chernoff's inequalities give
\[
 \PP(X>\varepsilon np/2)
 \le\frac{16K}{\varepsilon^2np}=o(1),\qquad
 \PP(|[n]_p|<np/2)\le e^{-np/8}=o(1).
\]
On their joint event, $X\le\varepsilon|[n]_p|$. Select one vertex
from each edge of $H$ contained in $[n]_p$, and let $T$ be the union
of the selected vertices. Then $|T|\le X$, and
$U=[n]_p\setminus T$ is independent
in $H$. By Lemma~\ref{lem:projection}, it is $A$-free and has
size at least $(1-\varepsilon)|[n]_p|$.

If $np\le\log n$, the inequality $j-D\ge1$ instead gives
\[
 \EE X\le n^Dp^j
 =\frac{(np)^j}{n^{j-D}}
 \le\frac{(\log n)^j}{n}=o(1).
\]
Thus $X=0$ with high probability by Markov's inequality, and we
can take $U=[n]_p$. The bounds in both ranges are uniform, so they
also cover sequences with bounded $np$ and the case $p=0$.
\end{proof}

\subsection{The robust threshold}

The density gap fixes one deletion proportion. Uniform counting
and projected alteration then give the two sides at that same
proportion.

\begin{proof}[Proof of Theorem~\ref{thm:robust}]
Take $\sigma$ from Lemma~\ref{lem:gap}, and set $\delta=\sigma/2$.
Apply Proposition~\ref{prop:counting} with error $\sigma/2$.
Its retained-size condition becomes
\[
 |U|\ge(1-\sigma+\sigma/2)|[n]_p|=(1-\delta)|[n]_p|.
\]
Choose $C>0$ sufficiently large and $c>0$ sufficiently small.
Then, for $p\ge Cn^{-1/m(A)}$, with probability at least
$1-e^{-cnp}$, every colouring of every such $U$ has at least $cp^kn^{k-\ell}$ ordered proper canonical
solutions. Thus $\Pr(\mathcal E_\delta)\le e^{-cnp}=o(1)$
since $m(A)>1$.

For the lower statement, apply Proposition~\ref{prop:alteration}
with $\varepsilon=\delta$. Decrease $c$ if necessary so that
$c\le b_\delta$ and $c<C$. Then, for $p\le cn^{-1/m(A)}$,
with high probability there is an $A$-free set $U\subseteq[n]_p$
with $|U|\ge(1-\delta)|[n]_p|$. Such a $U$ admits a colouring
with no proper canonical solution, so $\mathcal E_\delta$ holds
with high probability. This also covers $[n]_p=\varnothing$,
since its empty retained subset has no proper solution.
Decreasing $c$ preserves the upper conclusions.
\end{proof}

\medskip

\section{Two-block example: Schur matrix}\label{sec:examples}

The Schur matrix shows why two blocks are excluded from
Proposition~\ref{prop:zero}. The following result gives the dense
bound and its random counterpart.

For a finite set $W\subseteq\N$, let $\excan(W)$ be the largest
size of a subset of $W$ admitting a colouring with no proper
canonical solution for this prescribed pattern; the empty subset
is allowed.

\begin{proposition}\label{prop:twoblock}
Let $A=(1,1,-1)$, with prescribed blocks $\{1,3\}$ and $\{2\}$.
\begin{enumerate}
\item For every $n\ge1$, $\excan([n])=\lfloor2n/3\rfloor+1$.
\item If $p\sqrt n\to\infty$, then, with high probability,
\[
 \excan([n]_p)=\bigl(2/3+o(1)\bigr)|[n]_p|.
\]
\item For every fixed $t>0$,
\[
\liminf_{n\to\infty}\Pr\bigl([n]_{tn^{-1/2}}\canarrow A\bigr)
 \ge1-e^{-t^4/4}>0.
\]
\end{enumerate}
\end{proposition}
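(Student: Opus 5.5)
The plan is to reduce all three parts to one structural observation about colourings with no proper canonical solution for this pattern. Let $\chi$ be such a colouring of $S$, and let $a+b=c$ with $a,b,c\in S$ distinct. Both orders $(a,b,c)$ and $(b,a,c)$ are ordered solutions, and the prescribed pattern asks that $c$ share its colour with exactly one summand. Excluding the monochromatic, rainbow and prescribed patterns in both orders leaves only $\chi(a)=\chi(b)\ne\chi(c)$. Thus a colouring is canonical-free if and only if two conditions hold: every colour class has no proper solution, and no sum of two distinct elements with different colours lies in $S$. I would prove this equivalence first and then treat the three parts in order.

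\emph{Part (i).} For the lower bound, take the interval $S=\{\lceil n/3\rceil,\dots,n\}$, adjusted at the endpoint so that $|S|=\lfloor 2n/3\rfloor+1$; I will check the residues of $n$ mod $3$ separately, e.g.\ $n=3$ gives $\{1,2,3\}$. Colour the lower third of $S$, up to about $2n/3$, with colour $1$ and the rest with colour $2$. In a solution $a+b=c$ inside $S$, both summands are below $n/2$, so both have colour $1$. Their sum exceeds $2n/3$, so it has colour $2$. A colour-$1$ summand plus a colour-$2$ element exceeds $n$, so mixed sums leave $S$. At the boundary this needs care, and the endpoint choice is made exactly to keep these inequalities.

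The upper bound is where I expect the main obstacle. Given a canonical-free colouring of $S\subseteq[n]$, let $m=\max S$ and pair each $x<m/2$ with $m-x$. If both lie in $S$, they share a colour different from $\chi(m)$. Elements in the class of $m$ therefore have their partner outside $S$, and each class is sum-free within itself. I would first try a Schur/sum-free style counting: combine the pairs $\{x,m-x\}$ with the pairs $\{x,x+d\}$, where $d$ is the minimum of a suitably chosen class, to bound $|S|$ by $\lfloor 2m/3\rfloor+1$. If that fails, I would induct on $n$, removing $m$ and tracking the extremal configurations.

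\emph{Part (ii).} For the lower bound, apply the Chernoff bound to $[n]_p$ intersected with a set of density $2/3$ carrying a canonical-free colouring. The residues $1,2 \bmod 3$, coloured by residue, work: $1+1\equiv2$ and $2+2\equiv1$ force $\chi(a)=\chi(b)\ne\chi(c)$. For the upper bound, part (i) shows that every $\sigma<1/3$ is a canonical density gap. Here $m(A)=2$, so Proposition~\ref{prop:counting} with gap $\sigma$ and $\varepsilon\le\sigma$ yields the following: for $p\ge Cn^{-1/2}$, w.h.p.\ no subset of $[n]_p$ of size at least $(2/3+\varepsilon')|[n]_p|$ has a canonical-free colouring. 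Since $p\sqrt n\to\infty$, I will let $\sigma\uparrow1/3$ and $\varepsilon\downarrow0$ slowly along $n$ to get the $o(1)$ error.

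\emph{Part (iii).} I will use a four-element forcing configuration $\{a,b,a+b,a+2b\}$ with $a\ne b$ positive. Its solutions include $a+b=(a+b)$ and $b+(a+b)=a+2b$. The structural observation then forces $\chi(a)=\chi(b)$ from the first and $\chi(b)=\chi(a+b)$ from the second, while the first also requires $\chi(a+b)\ne\chi(a)$, which is impossible. Hence any copy inside $[n]_p$ gives $[n]_p\canarrow A$. The number of pairs $(a,b)$ with $a+2b\le n$ is $(1+o(1))n^2/4$. Removing the $O(n)$ degenerate pairs with coincident elements, the expected number of copies is $\mu=(1+o(1))p^4n^2/4\to t^4/4$. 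Janson's inequality gives $\Pr(\text{no copy})\le e^{-\mu+\Delta}$. Here $\Delta$ sums over pairs of copies sharing one, two or three elements, contributing $O(n^3p^7)$, $O(n^2p^6)$ and $O(n^2p^5)$, and so $\Delta\to0$ at $p=tn^{-1/2}$. This gives $\liminf\Pr([n]_p\canarrow A)\ge1-e^{-t^4/4}$.
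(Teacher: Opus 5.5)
Your characterisation of canonical-free colourings, the terminal-interval lower bound, the transfer argument in part (ii) and the Janson argument in part (iii) all match the paper. Your mod-$3$ colouring is a valid alternative lower-bound construction for (ii).

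\textbf{The gap.} The upper bound $\excan([n])\le\lfloor 2n/3\rfloor+1$ in part (i) is not proved. You list strategies you would try (pairing $x$ with $m-x$, pairs $\{x,x+d\}$, or induction on $n$), but none is carried out. This also leaves the upper bound in (ii) unsupported. There you invoke (i) to certify that every $\sigma<1/3$ is a canonical density gap before applying Proposition~\ref{prop:counting}.

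\textbf{The missing idea.} The four-point witness you already use in part (iii) proves the bound directly. Take $S\subseteq[n]$ with $|S|>2n/3+1$ and let $b=\min S$; then $b\le n-|S|+1<n/3$. For each of the $n-3b$ integers $a\in[b+1,n-2b]$, the set $\{b,a,a+b,a+2b\}$ consists of four distinct points of $[b,n]$. It contains $b\in S$ and is canonically forcing, so a canonical-free $S$ must omit one of $a$, $a+b$, $a+2b$. Each of the $n-b+1-|S|$ elements of $[b,n]\setminus S$ accounts for at most three values of $a$. Hence $n-3b\le 3(n-b+1-|S|)$, that is $3|S|\le 2n+3$, contradicting $|S|>2n/3+1$. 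Integrality then gives $\lfloor 2n/3\rfloor+1$.

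\textbf{A smaller slip in the lower bound.} The summands of a solution inside $[\lceil n/3\rceil,n]$ need not lie below $n/2$; for $n=9$, $3+6=9$. What holds is that both summands are at most $n-\lceil n/3\rceil\le 2\lceil n/3\rceil$. So colour $1$ must reach at least $n-\lceil n/3\rceil$, not just a lower third of $S$. Taking colour $1$ to be $[\lceil n/3\rceil,2\lceil n/3\rceil]$, as in the paper, works. No endpoint adjustment is needed, since $[\lceil n/3\rceil,n]$ already has exactly $\lfloor 2n/3\rfloor+1$ elements.
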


Throughout this section, $A=(1,1,-1)$ with prescribed blocks
$\{1,3\}$ and $\{2\}$.
A canonical-free colouring avoids proper monochromatic and rainbow
Schur solutions, as well as the pattern
$\chi(x_1)=\chi(x_3)\ne\chi(x_2)$. Since $m(A)=2$,
part~\textup{(iii)} concerns every fixed positive multiple of the
Rado scale.

We begin by identifying the colour pattern on each proper triple.
Two overlapping triples then give a four-point witness, used in
both the dense upper bound and the critical-scale probability estimate.

\begin{lemma}\label{lem:schurpatterns}
Let $A=(1,1,-1)$, with prescribed blocks $\{1,3\}$ and $\{2\}$.
A colouring $\chi$ of a finite set $W\subseteq\N$ is canonical-free
if and only if, for all distinct $x,y\in W$ with $x+y\in W$,
\begin{equation}\label{eq:schurbad}
 \chi(x)=\chi(y)\ne\chi(x+y).
\end{equation}
Moreover, $\{b,a,a+b,a+2b\}\canarrow A$ for all distinct
positive integers $a,b$.
\end{lemma}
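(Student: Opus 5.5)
We prove the characterisation by a case analysis on the colour pattern of a single proper Schur triple, and then derive the four-point statement by following two overlapping triples.

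\textbf{Characterisation.} A proper solution of $x_1+x_2=x_3$ is an ordered triple $(x,y,x+y)$ with $x,y$ distinct. Then $x+y$ automatically differs from both, since $x,y\ge1$. Its colour pattern is one of five equality partitions of the three positions.
\begin{itemize}
\item All three colours equal: this is monochromatic, hence canonical.
\item All three colours distinct: this is rainbow, hence canonical.
\item $\chi(x)=\chi(x+y)\ne\chi(y)$: this is exactly the prescribed pattern for the blocks $\{1,3\},\{2\}$, hence canonical.
\item $\chi(y)=\chi(x+y)\ne\chi(x)$: this is not itself a canonical pattern. However, the swapped ordered triple $(y,x,x+y)$ is also a proper solution, and it has the prescribed pattern.
\item $\chi(x)=\chi(y)\ne\chi(x+y)$: this is the only pattern that is not canonical in either ordering.
\end{itemize}
So a colouring is canonical-free if and only if every pair of distinct $x,y\in W$ with $x+y\in W$ satisfies \eqref{eq:schurbad}. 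Conversely, if \eqref{eq:schurbad} holds for every such pair, then every ordered proper solution, in both orderings, has the fifth pattern and is not canonical.

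\textbf{Four-point set.} Let $W=\{b,a,a+b,a+2b\}$ with $a\ne b$ distinct positive integers. First check that these are four distinct elements. We have $a+b\ne a,b$ and $a+2b\ne a,b,a+b$. The only possible coincidence is $a+2b$ with $b$, which would force $a=-b$ and is impossible.

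Suppose a colouring of $W$ is canonical-free. Apply the characterisation twice.
\begin{itemize}
\item The pair $(a,b)$ consists of distinct elements with sum $a+b\in W$. Hence $\chi(a)=\chi(b)\ne\chi(a+b)$.
\item The pair $(a+b,b)$ consists of distinct elements, since $a\ne0$, with sum $a+2b\in W$. Hence $\chi(a+b)=\chi(b)$.
\end{itemize}
The second conclusion contradicts $\chi(b)\ne\chi(a+b)$ from the first. Therefore every colouring of $W$ has a proper canonical solution, that is, $W\canarrow A$.

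The only step needing care is the ordering issue in the fourth case: one must remember that the coordinates are ordered, so the swapped triple is also a solution. The rest is direct verification.
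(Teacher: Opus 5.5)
Your proof is correct and follows the paper's argument: the same case analysis on the colour pattern of a proper triple, with the swap of the addends turning the pattern $\chi(y)=\chi(x+y)\ne\chi(x)$ into the prescribed one. You also use the same two overlapping triples $(a,b,a+b)$ and $(b,a+b,a+2b)$ to obtain the contradiction for the four-point witness.
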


\begin{proof}
If the addends of a proper Schur triple have different
colours, then its sum either has a third colour, giving a rainbow
solution, or has one addend's colour. In the latter case, order
that addend first to obtain the prescribed pattern. Both orders
are solutions of the same Schur equation. Thus a canonical-free
colouring must give the addends the same colour, and their sum a
different colour to avoid monochromaticity. Conversely,
\eqref{eq:schurbad} excludes all three canonical alternatives.

For the second assertion, the four points $b,a,a+b,a+2b$ are distinct when
$a,b>0$ and $a\ne b$. A canonical-free colouring would give
\[
 \chi(a)=\chi(b)\ne\chi(a+b)
\]
from the triple $(a,b,a+b)$, but $\chi(b)=\chi(a+b)$ from
$(b,a+b,a+2b)$. This contradiction proves the witness assertion.
\end{proof}

We first prove part~\textup{(i)} by colouring a terminal interval
for the lower bound and finding a four-point witness for the upper
bound. We then use the uniform count to obtain part~\textup{(ii)}.

\begin{proof}[Proof of Proposition~\ref{prop:twoblock}\textup{(i)}]
\claimsof{prop:twoblock}
We give the two bounds separately.

\begin{claim}\label{cl:schur-lower}
There is a canonical-free colouring of a subset of $[n]$ of
size $\lfloor2n/3\rfloor+1$.
\end{claim}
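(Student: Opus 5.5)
The plan is an explicit construction checked against the criterion of Lemma~\ref{lem:schurpatterns}: a colouring of $W$ is canonical-free exactly when every pair of distinct $x,y\in W$ with $x+y\in W$ satisfies $\chi(x)=\chi(y)\ne\chi(x+y)$. So it suffices to find a set of the required size and a colouring in which every proper Schur triple has equally coloured addends and a differently coloured sum.

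Set $m=\lceil n/3\rceil$ and take the terminal interval $W=\{m,m+1,\ldots,n\}$. It has
\[
 n-\lceil n/3\rceil+1=\lfloor2n/3\rfloor+1
\]
elements. Colour $W$ with two colours:
\[
 \chi(x)=1\quad(m\le x\le 2m),\qquad \chi(x)=2\quad(2m<x\le n).
\]
The second range may be empty, for instance when $n$ is small; that case is harmless.

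To verify the criterion, take distinct $x<y$ in $W$ with $x+y\in W$.
\begin{enumerate}
\item Since $x\ge m$ and $y\ge m+1$, we get $x+y\ge 2m+1$, so $\chi(x+y)=2$.
\item If $y\ge 2m+1$, then $x+y\ge 3m+1>n$, because $3m\ge n$. This contradicts $x+y\in W$, so $y\le 2m$.
\item Then $x<y\le2m$, so $\chi(x)=\chi(y)=1\ne 2=\chi(x+y)$.
\end{enumerate}
Lemma~\ref{lem:schurpatterns} then shows that $\chi$ is canonical-free on $W$, which proves the claim.

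The main point needing care is where to place the colour boundary. Splitting at $2m-1$ instead of $2m$ fails when $3\mid n$. In that case the triple $(n/3,\,2n/3,\,n)$ would receive colours $1,2,2$, and $\chi(x)\ne\chi(y)=\chi(x+y)$ is the prescribed pattern after reordering the addends. Putting the endpoint $2m$ into the first colour class is what excludes this triple. Properness, meaning $x\ne y$, is what gives the strict bound $x+y\ge 2m+1$ in the first step.
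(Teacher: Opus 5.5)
Your proposal is correct and takes the same approach as the paper: the terminal interval $[\lceil n/3\rceil,n]$, coloured with one colour on $[\lceil n/3\rceil,2\lceil n/3\rceil]$ and another above it, is checked against the criterion of Lemma~\ref{lem:schurpatterns}. The two facts used are the same as the paper's: both addends are at most $n-\lceil n/3\rceil\le 2\lceil n/3\rceil$, and distinct addends give a sum of at least $2\lceil n/3\rceil+1$.
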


\begin{poc}
Set $b=\lceil n/3\rceil$ and $S=[b,n]$. Colour
$[b,2b]\cap S$ red and the rest of $S$ blue. In any proper
triple $x+y=z$ in $S$, both addends are at most $n-b\le2b$,
so both are red. Since they are distinct integers at least $b$,
their sum is at least $2b+1$ and is blue. Thus every proper
triple has the excluded pattern~\eqref{eq:schurbad}. The colouring
is canonical-free, and
\begin{align*}
 |S|&=n-\lceil n/3\rceil+1=\lfloor2n/3\rfloor+1.\tag*{\qedhere}
\end{align*}
\end{poc}

A larger set necessarily contains the witness from
Lemma~\ref{lem:schurpatterns}.

\begin{claim}\label{cl:schur-upper}
Every $S\subseteq[n]$ with $|S|>2n/3+1$ is canonically Schur.
\end{claim}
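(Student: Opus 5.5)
The plan is to find the four-point witness of Lemma~\ref{lem:schurpatterns} inside $S$, using the smallest element of $S$ as the common difference. Let $b=\min S$. It suffices to find $a\in S$ with $a>b$ and $a+b,\,a+2b\in S$. Then $a\ne b$, the four numbers $b,a,a+b,a+2b$ are distinct elements of $S$, and Lemma~\ref{lem:schurpatterns} gives $\{b,a,a+b,a+2b\}\canarrow A$. Any colouring of $S$ restricts to a colouring of this set, so it has a proper canonical solution, and hence $S\canarrow A$.

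To find such an $a$, I will argue by contradiction with a pigeonhole count along residue chains. Suppose no such $a$ exists. Partition the interval $(b,n]$ into at most $b$ chains according to residue modulo $b$. Each chain consists of consecutive terms $u,u+b,u+2b,\ldots$ with common difference $b$, and the chain lengths $L_1,\ldots,L_b$ (some possibly zero) satisfy $\sum_r L_r=n-b$.

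By assumption, $S$ contains no three consecutive terms of any chain. A subset of a path of length $L$ with no three consecutive elements has at most $L-\lfloor L/3\rfloor\le(2L+2)/3$ elements. Summing over the chains gives
\[
 |S\cap(b,n]|\le\sum_{r=1}^{b}\frac{2L_r+2}{3}=\frac{2(n-b)+2b}{3}=\frac{2n}{3}.
\]
Hence $|S|\le 2n/3+1$, contradicting the hypothesis $|S|>2n/3+1$.

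The only delicate points are the following. First, $b=\min S$ ensures that every candidate $a$ exceeds $b$, which gives $a\ne b$ and hence distinctness of the four points. Second, the crude per-chain bound $(2L+2)/3$ must sum exactly to $2n/3$; this works because the $+2/3$ terms total $2b/3$, which exactly compensates for the $-2b/3$ coming from $\sum_r L_r=n-b$. I expect no real obstacle beyond checking this arithmetic.
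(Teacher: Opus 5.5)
Your proof is correct. It shares the paper's reduction: with $b=\min S$, it suffices to find some $a>b$ with $a,a+b,a+2b\in S$, and then Lemma~\ref{lem:schurpatterns} finishes.

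The two arguments differ only in how such an $a$ is found. The paper counts directly. There are $n-3b$ candidates $a\in[b+1,n-2b]$, and each of the $n-b+1-|S|$ integers of $[b,n]$ missing from $S$ destroys at most three of them. So at least $3|S|-2n-3>0$ candidates survive.

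You instead split $(b,n]$ into the $b$ residue chains modulo $b$. On each chain you apply the extremal bound $L-\lfloor L/3\rfloor\le(2L+2)/3$ for sets with no three consecutive terms. The additive $2/3$ per chain exactly offsets the deficit from $\sum_r L_r=n-b$.

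Both routes give exactly the same threshold $|S|>2n/3+1$. The paper's union bound is slightly shorter and also yields an explicit count of at least $3|S|-2n-3$ witnesses, which is not needed here. Your chain decomposition instead isolates the extremal structure behind the constant $2/3$: a set avoids the witness precisely when it misses one of every three consecutive terms in each progression of difference $b$ above $b$.
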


\begin{poc}
Set $b=\min S$ and $H=[b,n]\setminus S$. The size assumption
gives $b\le n-|S|+1<n/3$. There are $n-3b$ possible integers
$a\in[b+1,n-2b]$. For each such $a$, the points
$a,a+b,a+2b$ lie in $[b,n]$, and $b$ already belongs to $S$.
Each missing integer in $H$ rules out at most three choices
of $a$, according as it equals $a$, $a+b$ or $a+2b$. Hence
at least
\[
 n-3b-3|H|=3|S|-2n-3>0
\]
choices have all three points in $S$. A surviving choice gives
a canonical four-point subset by Lemma~\ref{lem:schurpatterns},
and therefore $S$ is canonical.
\end{poc}

An integer greater than $\lfloor2n/3\rfloor+1$ is greater than
$2n/3+1$. The two claims therefore prove the exact formula.
\end{proof}

The dense formula and Proposition~\ref{prop:counting} give the
sparse upper bound in part~\textup{(ii)}. Restricting the
terminal-interval colouring gives the lower bound.

\begin{proof}[Proof of Proposition~\ref{prop:twoblock}\textup{(ii)}]
Fix $\varepsilon\in(0,1/3)$. By
Proposition~\ref{prop:twoblock}\textup{(i)}, every colouring of a subset
of $[n]$ of size at least $(2/3+\varepsilon/2)n$ has a proper
canonical solution for all sufficiently large $n$. Apply
Proposition~\ref{prop:counting} with
$\sigma=1/3-\varepsilon/2$ and error $\varepsilon/2$.
There is $C_\varepsilon>0$ such that, for
$p\ge C_\varepsilon n^{-1/2}$, with high probability every
$U\subseteq [n]_p$ with $|U|\ge(2/3+\varepsilon)|[n]_p|$
is canonical. Hence $\excan([n]_p)\le(2/3+\varepsilon)|[n]_p|$.
The assumption $p\sqrt n\to\infty$ satisfies this condition
eventually for each fixed $\varepsilon$.

For the lower bound, let $S=[\lceil n/3\rceil,n]$. The
canonical-free colouring in Claim~\ref{cl:schur-lower} remains
canonical-free on $S\cap [n]_p$. Since $np\to\infty$, Chernoff's
inequality gives $|[n]_p|=(1+o(1))np$ and
$|S\cap [n]_p|=(2/3+o(1))np$ with high probability. In particular,
$[n]_p$ is nonempty and $\excan([n]_p)\ge(2/3-o(1))|[n]_p|$.
Together with the upper bound for every fixed $\varepsilon>0$,
this proves the result.
\end{proof}

It remains to prove part~\textup{(iii)}. We count the four-point
witnesses from Lemma~\ref{lem:schurpatterns} and apply Janson's inequality.

\begin{proof}[Proof of Proposition~\ref{prop:twoblock}\textup{(iii)}]
Fix $t>0$. Let $\mathcal F_n$ consist of the sets
\[
 F(u,v)=\{v,u,u+v,u+2v\},\qquad
 u,v\in\N,\quad u\ne v,\quad u+2v\le n,
\]
and let $Y$ count those contained in $[n]_p$, where
$p=tn^{-1/2}$. The two largest elements of $F(u,v)$ are
$u+v$ and $u+2v$. Their difference determines $v$, and then
$u$ is also determined. Thus different parameter pairs give
different witnesses, and
\begin{align*}
 |\mathcal F_n|
 &=\sum_{v=1}^{\lfloor(n-1)/2\rfloor}(n-2v)-\lfloor n/3\rfloor=n^2/4+O(n).
\end{align*}
The subtracted term excludes precisely $u=v$. Each witness
has four distinct elements, so
\[
 \mu:=\EE Y=|\mathcal F_n|p^4\longrightarrow t^4/4.
\]

There are $O(n^3)$ ordered pairs of distinct witnesses sharing
exactly one integer: choose the first witness and the roles of
the common value, then there are $O(n)$ possible second parameter
pairs. There are only $O(n^2)$ pairs sharing at least two integers.
Indeed, two common values and their roles determine the second
parameters, since any two distinct forms among
$v,u,u+v,u+2v$ are linearly independent. Distinct witnesses cannot
share all four points. Consequently,
\[
 \Delta:=\sum_{\substack{F,G\in\mathcal F_n\\
                       F\ne G,\ F\cap G\ne\varnothing}}
           \PP(F\cup G\subseteq [n]_p)
 =O(n^3p^7+n^2p^6+n^2p^5)=o(1).
\]
Janson's inequality~\cite{Janson} now gives
\[
 \PP(Y=0)\le\exp(-\mu+\Delta/2),
 \qquad
 \liminf_{n\to\infty}\PP(Y>0)\ge1-e^{-t^4/4}.
\]
Each witness is canonical by Lemma~\ref{lem:schurpatterns}.
Its occurrence therefore implies $[n]_p\canarrow A$, proving the
claimed lower bound. If a uniform zero constant $c>0$ existed,
taking any fixed $t\in(0,c]$ would contradict that bound.
\end{proof}

\section*{Acknowledgement}
We thank Mathias Schacht for introducing this question to us. ZL was supported by the Natural Science Foundation of Shanghai (No. 25ZR1402390), and GW was supported by the National Natural Science Foundation of China (No.~12671419), Natural Science Foundation of Shandong Province (No.~ZR2024MA073). ZY was supported by the Institute for Basic Science (IBS-R029-C4).

\section*{AI usage}

During the preparation of this paper, the authors used GPT PRO 6.0 to discuss technical steps in the proofs and to
revise the exposition, including the organisation of
Section~\ref{subsec:containers}. The authors developed the idea
of colour reduction followed by containers, inspired by the work
of Allen, B\"ottcher, Lada and Mergoni Cecchelli~\cite{ABLM}.
The authors take full responsibility for the arguments and text.

\appendix
\section{The finite canonical theorem}\label{app:finite}

We use Lefmann's theorem for Deuber sets~\cite{Lefmann} in the
form recorded by Pr\"omel~\cite[p.~14]{Promel}.
A positive $(m,P,c_0)$-set has generators
$u_0,\ldots,u_m\in\N$ and the following rows of positive integers:
\[
 D_r=\left\{c_0u_r+\sum_{t>r}\lambda_tu_t:
       \lambda_t\in\Z,\ |\lambda_t|\le P\right\}
 \qquad(0\le r\le m).
\]

\begin{theorem}[Lefmann]\label{thm:lefmann}
For every $m,P,c_0\in\N$, there are $m',P',c'\in\N$ such that
every colouring of every positive $(m',P',c')$-set contains a
positive $(m,P,c_0)$-set $D=\bigcup_{r=0}^m D_r$ on which the
colouring is either monochromatic, injective, or constant on each
row $D_r$ with different colours on different rows.
\end{theorem}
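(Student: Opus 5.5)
The plan is to canonise the colouring in the style of the Erd\H{o}s--Rado canonical theorem. We reduce the question to \emph{finitely many} equality relations and stabilise each one with an ordinary finite-colour Ramsey theorem for Deuber sets. For a colouring $\chi$ of a positive $(m',P',c')$-set, the $2$-colouring of pairs $\{x,y\}$ by whether $\chi(x)=\chi(y)$ carries all the information we need. Canonical behaviour means that this equality relation is one of three invariant equivalence relations: everything equal, everything distinct, or equal exactly within rows. So it suffices to make $\chi(x)=\chi(y)$ depend only on the \emph{type} of the pair $(x,y)$ inside a sub-Deuber set. The type records which rows $x$ and $y$ lie in and the relative pattern of their coefficient vectors.

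\textbf{Step 1: enlarge the target.} Choose an auxiliary $(m_1,P_1,c_1)$ with $m_1\gg m$, $P_1\gg P$ and $c_1$ a suitable multiple of $c_0$. Every positive $(m_1,P_1,c_1)$-set then contains many positive $(m,P,c_0)$-sets in controlled positions. We use this slack later to shift one element while keeping another fixed.

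\textbf{Step 2: a Ramsey theorem for pairs.} We need a Ramsey statement for pairs in Deuber sets: for fixed parameters and finitely many colours, a sufficiently large $(m',P',c')$-set has a sub-$(m_1,P_1,c_1)$-set on which the colour of a pair depends only on its type. I would derive this from the Graham--Rothschild parameter-set theorem, in the Deuber--Prömel--Voigt form for $(m,p,c)$-sets. The idea is to view the pair $(x,y)$ as the pair of points obtained from a sub-Deuber set of dimension $m_1+1$ by two specified coefficient choices, and to colour that sub-Deuber set by the equality bit for each of the finitely many pair types simultaneously. There are boundedly many types, so this is a finite colouring of subspaces, and Graham--Rothschild makes it constant. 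I expect this step to be the main obstacle. Deuber sets involve bounded coefficients $|\lambda_t|\le P$ and a leading coefficient $c_0$, so the embedding into a parameter-set framework must preserve positivity and the row structure, and the bookkeeping on $c'$ and $P'$ is delicate.

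\textbf{Step 3: classify the invariant equality relations.} Once equality depends only on pair type, it defines an equivalence relation on the sub-$(m_1,P_1,c_1)$-set that is invariant under all type-preserving moves. Using the slack from Step 1, we compare two elements of the same row that differ in one coefficient. If some such pair has equal colours, a chain of shifts connects any two elements of that row, so the whole row is monochromatic. Otherwise the row is injective. We then show that injectivity propagates across rows, and that an equality across rows forces a merging that again propagates. Passing to a suitable $(m,P,c_0)$-subset, we obtain three possibilities: every row is injective and colours never coincide across rows, giving injectivity; or every row is constant with pairwise distinct colours, giving the row pattern; or the row colours coincide, which after restriction gives monochromatic. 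In the mixed case, where some rows are constant and others injective, we select an $(m,P,c_0)$-subset whose rows all come from one kind. This is where the freedom in choosing $m_1\gg m$ is spent: first a pigeonhole over the rows, then the re-indexing of generators that keeps the induced row structure positive.
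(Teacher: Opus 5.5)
\textbf{The paper does not prove this statement.} It quotes it as a black box from Lefmann, in the form recorded by Pr\"omel, and uses it only to produce the finite interval $[M]$ in Lemma~\ref{lem:gap}. Your argument therefore has to stand as a proof on its own, and it does not yet: both substantive steps are missing.

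Step~2 is only asserted. You must fix a class of embeddings of an $(m_1,P_1,c_1)$-set into a Deuber set that is closed under composition and respects positivity and the leading coefficient. You must then prove, or locate, a Graham--Rothschild theorem for exactly that class. Everything in Step~3 depends on which embeddings are made homogeneous, since only pairs related by such an embedding are guaranteed to behave alike. You flag this as the main obstacle but leave it open.

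Step~3 does not follow even if Step~2 is granted.

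\textbf{The row dichotomy is a non sequitur.} Suppose no two elements of a row that differ in one coefficient share a colour. The row still need not be injective. For example, colour $cu_r+\sum_{t>r}\lambda_tu_t$ by $(r,\sum_{t>r}\lambda_t)$: this has no such coincidence, yet it is far from injective once $r\le m-2$. Conversely, one coincident one-coefficient pair spreads along your chain only if all one-coefficient pairs have the same type, and a priori they do not. Three things vary:
\begin{enumerate}
\item the index at which the coefficient changes;
\item the two coefficient values involved;
\item the effect of an embedding, which turns a change in $\lambda_t$ into a change at every old generator occurring in $u'_t$.
\end{enumerate}
You need an Erd\H{o}s--Rado-type argument, using homogeneity and auxiliary points, showing that any coincidence inside a row forces coincidences of every one-step type.

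\textbf{The cross-row claims are unsupported.} You assert that injectivity ``propagates across rows'' and that a cross-row equality ``forces a merging'', but give no argument. Your final case list also omits a real possibility: injective rows with structured cross-row coincidences. Colour $x\in D_r$ by its non-leading part $x-cu_r$. Every row is injective, yet an element of $D_r$ with $\lambda_{r+1}=0$ collides with an element of $D_{r+1}$. This persists on every sub-Deuber set taken along a subsequence of generators, $u'_t=(c_1/c_0)u_{\rho(t)}$. It is destroyed, for instance, by generators carrying an extra marker term, such as $c_0u'_r=c_1(u_{\rho(r)}+u_{\rho(r)+1})$ with $\rho(r)+1$ outside the image of $\rho$. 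Your sketch never introduces embeddings of this kind, nor a homogeneity statement strong enough to compare them with plain ones.

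Classifying the type-invariant equivalence relations is the actual content of Lefmann's theorem. Without it, your proposal is a strategy rather than a proof. In the context of this paper, the appropriate step is to cite the theorem, as the authors do.
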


\begin{proof}[Proof of Lemma~\ref{lem:gap}]
For $1\le t\le s$, choose rational $b_{t,i}$, with
$i\in I_0\cup\cdots\cup I_{t-1}$, such that
\[
 \sum_{j\in I_t}\mathbf a_j
 =\sum_{i\in I_0\cup\cdots\cup I_{t-1}}b_{t,i}\mathbf a_i.
\]
Choose $c_0\in\N$ clearing their denominators and a proper
vector $\mathbf v\in\N^k\cap\ker A$. For $i\in I_r$, define
the integer form
\[
 L_i(\mathbf u)=c_0u_r-c_0\sum_{t=r+1}^s b_{t,i}u_t+v_i u_{s+1}.
\]
The defining equations for $b_{t,i}$, the zero sum on $I_0$,
and $A\mathbf v=0$ give $A(L_1(\mathbf u),\ldots,L_k(\mathbf u))^\top=0$
for every $\mathbf u$.

Let $B\ge1$ bound $c_0$, all $|c_0b_{t,i}|$ and all $v_i$,
and choose an integer $P>2c_0B$. In any positive
$(s+1,P,c_0)$-set with generators $u_0,\ldots,u_{s+1}$,
we have $L_i(\mathbf u)\in D_r$ for $i\in I_r$, so every
coordinate is positive. Positivity of the rows also gives
$c_0u_r>P\sum_{t>r}u_t$ for $0\le r\le s$.
For $i\ne j$, the coefficient of $u_{s+1}$ in $L_i-L_j$ is
$v_i-v_j\ne0$, while every coefficient is an integer of absolute
value at most $2B$. If the first nonzero coefficient has index
$r\le s$, then
\[
 |L_i-L_j|\ge u_r-2B\sum_{t>r}u_t>0,
\]
by the choice of $P$. If its index is $s+1$, the difference is
also nonzero. Thus the solution is proper.

Apply Theorem~\ref{thm:lefmann} with $m=s+1$ and the parameters
$P,c_0$ above. Choose a positive $(m',P',c')$-set as host,
taking its generators backwards so that every row is positive.
Every colouring of the host contains a positive $(s+1,P,c_0)$-set
on which the colouring is monochromatic, injective, or constant
on each row with different colours on different rows.
The solution constructed above is respectively monochromatic,
rainbow, or has the prescribed block pattern. Take $M\ge k$
so that the host lies in $[M]$. Then $[M]\canarrow A$.

Set $\sigma=1/(4M^2)$. For large $n$, there are
$\lfloor n/M\rfloor$ dilates $t[M]\subseteq[n]$, and each
integer lies in at most $M$ of them. If
$|[n]\setminus S|\le\sigma n$, the complement meets at most
$M\sigma n=n/(4M)$ dilates, so one lies entirely in $S$.
Dilating a solution preserves the equation, properness and
colour-equality pattern. Hence $S\canarrow A$, and $\sigma$
is a canonical density gap.
\end{proof}

\end{document}